\documentclass[final]{amsart}
\usepackage[pdfencoding=auto, psdextra]{hyperref}
\usepackage{amssymb, amsthm, mathrsfs}

\usepackage{biblatex}
\theoremstyle{plain}
\newtheorem*{theorem*}{Theorem}

\newtheorem{corollary}{Corollary}[section]
\newtheorem{lemma}[corollary]{Lemma}
\newtheorem{proposition}[corollary]{Proposition}
\newtheorem{theorem}[corollary]{Theorem}

\theoremstyle{definition}

\newtheorem{example}[corollary]{Example}
\newtheorem{remark}[corollary]{Remark}

\usepackage{tikz}
\usetikzlibrary{cd}
\usetikzlibrary{decorations.markings}

\usepackage{enumitem}

\newcommand{\qbinom}{\genfrac{[}{]}{0pt}{}}

\newcommand{\calB}{\mathcal B}

\newcommand{\calM}{\mathcal M}

\newcommand{\bbN}{\mathbb N}
\newcommand{\bbQ}{\mathbb Q}
\newcommand{\bbR}{\mathbb R}
\newcommand{\bbZ}{\mathbb Z}

\newcommand{\frakg}{\mathfrak g}
\newcommand{\frakh}{\mathfrak h}

\newcommand{\ula}{{\underline a}}
\newcommand{\ulb}{{\underline b}}
\newcommand{\uli}{{\underline i}}
\newcommand{\ulj}{{\underline j}}
\newcommand{\ulk}{{\underline k}}
\newcommand{\ull}{{\underline l}}
\newcommand{\ulm}{{\underline m}}

\newcommand{\te}{\tilde e}
\newcommand{\tf}{\tilde f}

\DeclareMathOperator{\id}{id}

\DeclareMathOperator{\trace}{tr}
\DeclareMathOperator{\weight}{wt}

\DeclareMathOperator{\ad}{ad}

\DeclareMathOperator{\Fr}{Fr}
\DeclareMathOperator{\Hom}{Hom}

\newcommand{\drawDoubleEdge}[2]{%
    \draw([yshift=1.5pt]#1.east) -- ([yshift=1.5pt]#2.west);
    \draw ([yshift=-1.5pt]#1.east) -- ([yshift=-1.5pt]#2.west);
    \draw[decoration={markings,mark=at position 0.7 with \arrow[opaque]{Classical TikZ Rightarrow[length=4pt]}}, postaction={decorate}, transparent] (#1) -- (#2);
}

\newcommand{\drawReverseDoubleEdge}[2]{%
    \draw([yshift=1.5pt]#1.east) -- ([yshift=1.5pt]#2.west);
    \draw ([yshift=-1.5pt]#1.east) -- ([yshift=-1.5pt]#2.west);
    \draw[decoration={markings,mark=at position 0.4 with \arrowreversed[opaque]{Classical TikZ Rightarrow[length=4pt]}}, postaction={decorate}, transparent] (#1) -- (#2);
}

\newcommand{\drawTripleEdge}[2]{%
    \draw([yshift=2pt]#1.east) -- ([yshift=2pt]#2.west);
    \draw ([yshift=-2pt]#1.east) -- ([yshift=-2pt]#2.west);
    \draw[decoration={markings,mark=at position 0.3 with \arrowreversed{Classical TikZ Rightarrow[length=5pt]}}, postaction={decorate}] (#1) -- (#2);
}

\title[Imaginary vectors, tight monomial cones, quantum Frobenius]{Dual imaginary vectors, tight monomial cones and quantum Frobenius morphism}
\author{Felix Röhrich}
\address{Lehrstuhl für Algebra und Darstellungstheorie, RWTH Aachen University, Pontdriesch 10-16, 52062 Aachen, Germany}
\email{roehrich.math@pm.me}

\begin{document}

\begin{abstract}
    We show that the quantum Frobenius morphism and its splitting are not fully compatible with the canonical basis for any finite-dimensional simple Lie algebra if the rank is sufficiently large.
    The incompatibility occurs at same place where Leclerc found his imaginary vectors, and where there are monomials in the tight monomial cone which do not belong to the canonical basis.
\end{abstract}

\maketitle

\section{Introduction}

Let $\frakg$ be a (finite-dimensional) simple Lie algebra and $U_q(\frakg)$ its quantized enveloping algebra.
The canonical basis, introduced by Lusztig \cite{Lusztig:CanonicalBases-QuantizedEnvelopingAlgebras,Lusztig:Quivers-PerverseSheaves-QuantizedEnvelopingAlgebras} and Kashiwara \cite{Kashiwara:CrystalBases-QuantizedUniversalEnvelopingAlgebras}, is a distinguished basis of $U_q^-(\frakg)$ with remarkable properties.
It is relatively well-understood in the types $A_1$, $A_2$, $A_3$ and $B_2$ due to explicit descriptions available there \cite{Lusztig:CanonicalBases-QuantizedEnvelopingAlgebras,Xi:CanonicalBasis-A3,Xi:CanonicalBasis-B2}.
For $A_4$ an effort has been made to obtain a complete description \cite{Hu-Ye-Yue:CanonicalBasis-A4-I-Monomial-Elements,Hu-Ye:CanonicalBasis-A4-II-PolynomialElements-One-Variable}, which presents a similar picture as in the previous cases.
However, a fundamental change occurs in $A_5$; this can be seen from the following examples:

The singular support of canonical basis elements is not necessarily irreducible \cite{KashiwaraSaito:GeometricConstruction-CrystalBases};
there are imaginary vectors in the dual canonical basis \cite{Leclerc:ImaginaryVectors-DualCanonicalBasis};
the Frobenius morphism and its splitting are not fully compatible with the canonical basis \cite{Baumann:CanonicalBasis-QuantumFrobeniusMorphism};
there are monomials in Lusztig's tight monomial cone which do not belong to the canonical basis;
there is a counterexample to an analogue of the James conjecture for Khonaov-Lauda-Rouquier algebras \cite{Williamson:Analogue-JamesConjecture};
and the cluster algebra structure on quantum coordinate ring $A_q(\frakg; w_0)$ has infinite cluster type \cite{Geiss-Leclerc-Schroer:SemicanonicalBases-PreprojectiveAlgebras}.

In this paper we will focus on the tight monomial cone, the duals of Leclerc's imaginary vectors, and the quantum Frobenius morphism and its splitting.

The \emph{tight monomial} was found by Lusztig, when he investigated under which conditions a monomial of $U_q^-(\frakg)$ belongs to the canonical basis \cite{Lusztig:TightMonomials-QuantizedEnvelopingAlgebras}.
He called a monomial tight if it is an element of the canonical basis, and semi-tight if it is an $\bbN$-linear combination of elements of the canonical basis.
His examples showed that in $A_2$ and $A_3$ a monomial is tight if its exponent vector belongs to the tight monomial cone.
He thus posed the question if this holds true in general.
This turned out to be true in $A_4$ \cite{Marsh:TightMonomials-QuantizedEnvelopingAlgebras} and $B_2$ \cite{Xi:CanonicalBasis-B2}, however Reineke \cite{Reineke:Monomials-CanonicalBases-QuantumGroups-QuadraticForms} showed for $A_6$ that there is, in fact, a 6-dimensional region for which this is false.
Despite the initial strong interest in the tight monomial cone, it has become neglected after the inception of cluster algebras.
Only sporadic results (e.g. \cite{CalderoMarshMorier-Genoud:Realisation-LusztigCones,FangFourierReineke:Cones-QuantumGroups-TropicalFlagVarieties}) have been obtained since then, nevertheless they highlight that the tight monomial deserves further investigation.

The \emph{imaginary vectors} are special elements of the dual canonical basis.
They were introduced by Leclerc \cite{Leclerc:ImaginaryVectors-DualCanonicalBasis} as counterexamples to a conjecture Berenstein and Zelevinsky \cite{BerensteinZelevinsky:StringBases-QuantumGroups-Ar} on the multiplicative properties of the dual canonical basis.
They conjectured that for any two elements $b_1$ and $b_2$ of the dual canonical basis, their product $b_1b_2$ belongs again to the canonical basis up to a power of $q$ if and only if $b_1$ and $b_2$ $q$-commute; which is in fact true in $A_n$ up to $n = 4$ and $B_2$.
The imaginary vectors have also found application in the representation theory of quantum affine algebras \cite{BritoChari:HigherOrderKirillovReshetikhinModules-ImaginaryModules-MonoidalCategorification}.

The \emph{quantum Frobenius morphism} $\Fr$ \emph{and its splitting} $\Fr'$ play an important role in representation theory and geometry; for example:
$\Fr$ is the obvious quantum analogue of the Frobenius morphism for reductive groups over a field of finite characteristic, which plays an important role in their representation theory.
Kumar and Littelmann \cite{KumarLittelmann:FrobeniusSplitting-CharacteristicZero-QuantumFrobenius} showed that the dual of $\Fr$ can be viewed as characteristic zero analogue of the geometric Frobenius splitting.
The path vectors were constructed by Littelmann \cite{Littelmann:ContractingModules-StandardMonomialTheory} using the Frobenius splitting $\Fr'$.
Song \cite{Song:QuantumFrobeniusSplitting-ClusterStructures} proved, under certain assumptions, that duals of the quantum Frobenius morphism and its splitting are compatible with the cluster monomials.

The main result of this paper is the following theorem.

\begin{theorem*}
    The quantum Frobenius morphism and its splitting are not fully compatible with the canonical basis in the types $A_n$ ($n \ge 5$), $B_n$ ($n \ge 3$), $C_n$ ($n \ge 3$), $D_n$ ($n \ge 4$), $E_6$, $E_7$, $E_8$, $F_4$ and $G_2$.
    The incompatibility occurs at same place where Leclerc found his imaginary vectors, and where there are monomials in the tight monomial cone which do not belong to the canonical basis.
\end{theorem*}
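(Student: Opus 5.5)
The plan is to reduce to a small number of ``minimal'' rank cases and then propagate upwards by restriction to Levi subalgebras. Both the canonical basis and the maps $\Fr$, $\Fr'$ behave well under passing to the subalgebra $U_q^-(\frakg_J)$ for a subset $J$ of the simple roots. Indeed, $U_q^-(\frakg_J)$ is spanned by canonical basis elements of $U_q^-(\frakg)$: those whose weight lies in $\bbN J$. Moreover, $\Fr$ and $\Fr'$ commute with the inclusions of the divided-power forms. So an incompatibility in $\frakg_J$ gives one in $\frakg$. Every Dynkin diagram in the list contains one of $A_5$, $B_3$, $C_3$, $D_4$, $F_4$, $G_2$ as a subdiagram: $E_6$, $E_7$, $E_8$ contain $A_5$, $D_n$ contains $D_4$, and so on. Hence it suffices to treat these base cases. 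For $C_3$ and $F_4$ one could also check whether $B_3$ or $C_3$ sits inside, with the root lengths respected; $F_4$ contains both.

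For each base case I would work in a fixed weight space $\nu$ of small height, namely the one where Leclerc's imaginary vector lives. For $A_5$ this is weight $\alpha_1+2\alpha_2+\dots$, with the doubled middle pattern. For the other types I would use the analogous weights, which I would obtain from Kashiwara--Saito-type reducible components or from the tight monomial counterexamples. There I would do three things. First, write down a PBW basis for a reduced expression of $w_0$ adapted to a quiver orientation. Second, compute the canonical basis elements of weight $\nu$ and $2\nu$ via the bar involution and unitriangularity, or equivalently as duals of the dual canonical basis, using Leclerc's explicit imaginary vector $b$ and its dual $b^*$. Third, apply $\Fr'$ to $b^*$ (or to the relevant monomial $F^{(a)}$ lying in the tight cone) at the root of unity of order $\ell$, and expand the result in the canonical basis of weight $\ell\nu$. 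The expected phenomenon is that $\Fr'(b^*)$ is not a canonical basis element, or that $\Fr(b')$ for a canonical $b'$ of weight $\ell\nu$ has a negative or nonzero unexpected coefficient. This would reflect the fact that $b^2$ is not in the dual canonical basis up to a power of $q$, so that the ``Frobenius twist'' of $b^*$ fails to be canonical. The link with the tight cone is that the same monomial, whose exponent vector lies in Lusztig's cone, decomposes as $b^* + $ (a nonzero positive combination). I would verify this by the same PBW expansion.

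The main obstacle is the explicit computation in the base cases. Even in $A_5$ the weight space of weight $2\nu$ is large, and the canonical basis there must be computed to detect the failure. I would try first to use a structural shortcut. The quantity $\Fr'(b^*)$ relates to the $\ell$-th power of $b^*$ modulo lower terms. Non-compatibility then follows if $(b^*)^{(\ell)}$, or the square $(b^*)^2$ for the imaginary vector, has a PBW expansion whose leading term is canonical while a second canonical element appears with coefficient not divisible in the required way. If this shortcut fails, I would fall back to a computer calculation in each of the six base cases, using the Leclerc and Reineke weights as the search location.
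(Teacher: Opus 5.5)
Your reduction to the minimal types through Levi subalgebras is sound. It is also what the paper relies on tacitly: its explicit counterexamples live only in $A_5$, $B_3$, $C_3$, $D_4$ and $G_2$, and $\calB(\infty)$, $S_\infty$, $\Fr$, $\Fr'$ all restrict compatibly to $U_q^-(\frakg_J)$.

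The gap is in the base cases, which carry all the content. You never name a canonical basis element $G(b)$ with $\Fr'(G(b)) \neq G(S_\infty(b))$, and the one concrete candidate you do name is compatible. The element dual to Leclerc's imaginary vector is the tight monomial $\theta^{[1]}$. For $l$ coprime to all $(\alpha_i,\alpha_i)/2$ one has $\Fr'(\theta^{[1]}) = \theta^{[l]}$, which lies in $\calB(\infty)$ by Lemma \ref{lemma:theta-in-canonical-basis}. So the ``Frobenius twist of $b^*$'' \emph{is} canonical, and nothing fails there (nor for $\Fr$, since $\Fr(\theta^{[2]})=\theta^{[1]}$ at $l=2$).

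For the same reason, the tight monomial attached to $b^*$ does not split. The failing tight monomial is $\xi^{[1]}$, of weight $2\weight\theta^{[1]}$. It splits as $\theta^{[2]} + (\xi^{[1]}-\theta^{[2]})$, with the twist $\theta^{[2]}$ of $b^*$ (not $b^*$ itself) as one summand. Your shortcut relating $\Fr'(b^*)$ to an $\ell$-th power of $b^*$ has no justification. A computer search is not an argument until you say which element to test.

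The missing idea is to test the \emph{other} canonical basis element in weight $2\weight\theta^{[1]}$, namely $G(b') = \xi^{[1]}-\theta^{[2]}$ (Corollary \ref{corollary:l1-in-canonical-basis}). Getting there requires the norm computations of Theorem \ref{theorem:bilinear-form-values}:
\[
(\theta^{[2l]},\xi^{[l]}) \in 1+qA \quad\text{and}\quad (\xi^{[l]},\xi^{[l]}) \in (l+1)+qA \quad\text{for all } l .
\]
There the tight equalities (P3)--(P4) turn the $q$-exponent of each summand into a nonnegative quadratic form whose zeros are then counted. The sign is fixed afterwards by string-cone triangularity.

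With this in hand, $\Fr'(G(b')) = \xi^{[l]}-\theta^{[2l]}$ has norm in $l+qA$, so it is not $\pm$ a canonical basis element for $l\ge 2$. In $A_5$, $D_4$ and $G_2$ with $l=2$, every $\xi^{[1]}$ has an odd exponent, so $\Fr(G(b')) = -\theta^{[1]}$, which is neither $0$ nor in $\calB(\infty)$. In $B_3$ and $C_3$ the paper's $\Fr$ counterexample instead uses $l=4$, resp.\ $l=2$, with unequal $l_i$. Then $\frakg^*$ is of the Langlands dual type, and $\Fr(G(b'))$ is minus the $\theta^{[1]}$ of $C_3$, resp.\ $B_3$. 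Your plan does not anticipate this switch either.
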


The result on the quantum Frobenius morphism and its splitting for $A_5$ and $D_4$ was already obtained by Baumann \cite{Baumann:CanonicalBasis-QuantumFrobeniusMorphism}.
He also noted the connection to Leclerc's imaginary vectors.
Our contribution is extending the result to all finite types and establishing the connection to the tight monomial cone.

The paper is structured as follows.
In Section \ref{section:preliminaries} we introduce the notation and recall the definition and important properties of the canonical basis from Kashiwara's point of view.

In Section \ref{section:tight-monomial-cones} we recall the definition of the tight monomial cone.
Then we show the it embeds into the adapted string cone defined by Littelmann \cite{Littelmann:Cones-Crystals-Patterns}, which allows us to give a sufficient condition for a monomial in the tight monomial cone to belong to the canonical basis (Proposition \ref{proposition:lusztig-cone-canonical-basis}).
Using this condition we find a family of monomials which belong to the canonical basis (Proposition \ref{proposition:family-of-monomials-in-canonical-basis}).

In Section \ref{section:dual-imaginary-vectors} we then use the tight monomial cone to study the duals of Leclerc's imaginary vectors and their squares (up to normalization).
Using this perspective we identify these elements as part of a larger family.
Surprisingly, there is only a finite number of pairs $\theta^{[l]}$ and $\xi^{[l]}$ having these properties (Theorem \ref{theorem:classification}).
Following this, we compute the values of the bilinear form on $U_q^-(\frakg)$ for all pairings of $\theta^{[l]}$ and $\xi^{[l]}$ (Theorem \ref{theorem:bilinear-form-values}).
Finally, we apply these results give examples of monomials in the tight monomial cone which do not belong to the canonical basis (Section \ref{section:application-tight-monomial-cones}).

In Section \ref{section:quantum-frobenius} we use the explicit description of the canonical basis to show that the quantum Frobenius morphism and its splitting are fully compatible with the canonical basis in the types $A_1$, $A_2$, $A_3$ and $B_2$.
Afterwards, we provide examples in types $A_5$, $B_3$, $C_3$, $D_4$, and $G_2$ where both maps are not fully compatible with the canonical basis.

\subsection*{Acknowledgements}
I want to thank Xin Fang for helpful discussions and comments on the manuscript, and Markus Reineke for his interest in this project and suggestions.
I acknowledge funding by the Deutsche Forschungsgemeinschaft (DFG, German Research Foundation) - project number 442047500 - through the Collaborative Research Center ``Sparsity and Singular Structures'' (SFB 1481).

\section{Preliminaries}
\label{section:preliminaries}

\subsection{Quantized enveloping algebras}

\subsubsection*{Root Datum}

Throughout this paper we fix a \emph{root datum} consisting of
\begin{enumerate}
	\item A finite dimensional $\bbQ$-vector space $\frakh$,
	\item a finite index set $I$,
	\item a linearly independent set $\{ \alpha_i \mid i \in I \}$ of $\frakh^*$ (the set of \emph{simple roots}),
	\item a subset $\{ h_i \mid i\in I\}$ of $\frakh$ (the set of \emph{simple coroots}),
	\item a $\bbQ$-valued symmetric bilinear form $(\cdot, \cdot)$ on $\frakh^*$, and
	\item a lattice $P \subseteq \frakh^*$ (the \emph{weight lattice}) together with the natural pairing $\langle \cdot, \cdot\rangle \colon P^\vee \otimes P \to \bbZ$, where $P^\vee := \Hom_\bbZ(P, \bbZ)$ is the dual lattice,
\end{enumerate}
such that for $i\in I$ the following properties are fulfilled:
\begin{enumerate}
	\item $(\langle h_i, \alpha_j\rangle)_{ij}$ is a Cartan matrix,
	\item $(\alpha_i, \alpha_i) \in 2\bbZ_{>0}$,
	\item $\langle h_i, \lambda\rangle = 2(\alpha_i, \lambda)/(\alpha_i, \alpha_i)$ for $\lambda \in P$,
	\item $\alpha_i\in P$ and $h_i\in P^\vee$.
\end{enumerate}

We assume that the root datum is \emph{simply-connected}, that is, there exist $\varpi_i \in P$ such that $\langle h_i, \varpi_j\rangle = \delta_{ij}$ for any $i, j\in I$.

Let $Q = \bigoplus_{i\in I}\bbZ\alpha_i \subseteq P$ be the \emph{root lattice}; we set $Q_+ := \bigoplus_{i\in I} \bbZ_{\ge0} \alpha_i$ and $Q_- := -Q_+$.
For $\xi = \sum_{i\in I} \xi_i\alpha_i \in Q$, define $\trace(\xi) = \sum_{i\in I} \xi_i$.
We call a weight $\lambda \in P$ \emph{dominant} if $\langle h_i, \lambda\rangle \ge 0$ for all $i\in I$, and denote set of dominant weights by $P_+$.
A weight $\lambda$ is \emph{regular} if $\langle h_i, \lambda\rangle \neq 0$ for all $i\in I$.
We define the \emph{Weyl vector} $\rho := \sum_{i\in I} \varpi_i$.

The weight lattice $P$ has a partial order given by $\lambda \succeq \mu$ if $\lambda - \mu \in Q_+$, that is, $\lambda - \mu$ is a $\bbZ_{\ge0}$-linear combination of simple roots.

\subsubsection*{Kac-Moody Lie algebra}
\label{section:definition-kac-moody-lie-algebra}

Let $\frakg$ be the \emph{symmetrizable Kac-Moody Lie algebra} associated with the Cartan matrix $(\langle h_i, \alpha_j\rangle)_{ij}$.
That is to say, $\frakg$ is the $\bbQ$-vector space generated by $e_i$, $f_i$ ($i \in I$) and $\frakh$ with the Lie bracket
\[
    [h, h'] = 0,\quad [h, e_i] = \langle h, \alpha_i\rangle e_i,\quad [h, f_i] = -\langle h, \alpha_i\rangle f_i, \quad [e_i, f_j] = \delta_{ij} h_i,
\]
for $i, j \in I$ and $h, h' \in \frakh$, and the relations for $i \neq j$
\[
    \ad(e_i)^{1-\langle h_i, \alpha_j\rangle} e_j = 0, \quad \ad(f_i)^{1-\langle h_i, \alpha_j\rangle} f_j = 0.
\]

\subsubsection*{Weyl Group}
\label{section:definition-weyl-group}

For $i\in I$ define the \emph{simple reflection} $s_i\colon \frakh^* \to \frakh^*$,
\begin{equation}
    \label{equation:simple-reflection}
    s_i \lambda = \lambda - \langle h_i, \lambda\rangle \alpha_i.
\end{equation}
The group $W$ generated by the $s_i$ ($i\in I$) is called the \emph{Weyl group}.

For a finite sequence $\uli = (i_1, \dots, i_r)$ in $I$ we write
\begin{equation}
    s_\uli = s_{i_1} \cdots s_{i_r}.
\end{equation}

Given $w \in W$, its \emph{length} $\ell(w)$ is the minimal integer $r$ such that $w = s_\uli$ for some sequence $\uli = (i_1, \dots, i_r)$ in $I$; such a word $s_\uli = s_{i_1}\cdots s_{i_r}$ is called a \emph{reduced decomposition} of $w$.
Similarly, we say that a sequence $\uli$ in $I$ is \emph{reduced} if $s_\uli$ is a reduced decomposition of some element of $W$.

\subsubsection*{Quantized enveloping algebra}
\label{section:definition-quantized-enveloping-algebra}

The \emph{quantized enveloping algebra} $U_q(\frakg)$ is the algebra over the rational function field $\bbQ(q)$ generated by $e_i, f_i$ ($i\in I$) and $q^h$ ($h\in P^\vee$) subject to the following relations for $i, j \in I$ and $h, h' \in P^\vee$:
\begin{enumerate}
    \item $q^0 = 1$, $q^h q^h = q^{h+h'}$,
    \item $q^h e_i q^{-h} = q^{\langle h, \alpha_i\rangle} e_i$, $q^h f_i q^{-h} = q^{-\langle h, \alpha_i\rangle} f_i$
    \item $[e_i, f_j] = \delta_{ij} (t_i - t_i^{-1})/(q_i - q_i^{-1})$, and
    \item $\sum_{k=0}^a (-1)^k e_i^{(k)} e_j e_i^{(a-k)} = \sum_{k=0}^a (-1)^k f_i^{(k)} f_j f_i^{(a-k)} = 0$ for $i \neq j$, $a = 1 - \langle h_i, \alpha_j\rangle$.
\end{enumerate}
Here we set $q_i = q^{(\alpha_i,\alpha_i)/2}$, $t_i = q_i^{h_i}$; $e_i^{(n)} = e_i^n/[n]_i!$ and $f_i^{(n)} = f_i^n/[n]_i!$ are the \emph{divided powers} as usual, where
\[
	[n]_i := \frac{q_i^n - q_i^{-n}}{q_i - q_i^{-1}} \quad \text{for } n \in \bbZ \quad \text{and} \quad [n]_i! := [n]_i [n-1]_i \cdots [1]_i \quad \text{for } n \in \bbZ_{\ge0}.
\]
Furthermore, we put
\[
    \qbinom{n}{k} := \frac{[n] [n-1]_q\cdots [n-k+1]}{[k] [k-1] \cdots [1]} \quad \text{for } n \in \bbZ \text{ and } k \in \bbN.
\]
We will also use the notations
\begin{equation}
    \qbinom{t_i; n}{k} := \prod_{j=1}^k \frac{q_i^{n+1-j} t_i - q_i^{-(n+1-j)} t_i^{-1}}{q_i^j - q_i^{-j}}
\end{equation}
for $n\in \bbZ$ and $k\in \bbZ_{\ge0}$, and
\[
    \qbinom{t_i}{k} := \qbinom{t_i; 0}{k}.
\]

For $n\in \bbZ$ we use the convention that $f_i^{(n)} = e_i^{(n)} = 0$ if $n < 0$.

For a sequence $\uli = (i_1, \dots, i_r)$ in $I$ and $\ula = (a_1, \dots, a_r) \in \bbZ^r$, we set
\begin{equation}
    f_\uli^{(\ula)} := f_{i_1}^{(a_1)} \cdots f_{i_r}^{(a_r)} \in U_q^-(\frakg).
\end{equation}

\subsubsection*{Weight grading}
The algebra $U_q(\frakg)$ has a natural $Q$-grading; for $\xi \in Q$ the graded component is
\[
    U_q(\frakg)_\xi = \{ u \in U_q(\frakg) \mid q^h u q^{-h} = q^{\langle h, \xi\rangle} u \text{ for all } h \in P^\vee \}.
\]
If $u \in U_q(\frakg)_\xi$, we say that $\xi$ is the \emph{weight} of $u$ and denote it by $\weight u$.

\subsubsection*{Automorphisms}

The \emph{bar involution} is the $\bbQ$-algebra automorphism $-\colon U_q(\frakg) \to U_q(\frakg)$ given by
\begin{equation}
    \overline{e_i} = e_i,\quad \overline{f_i} = f_i,\quad \overline{q}= q^{-1},\quad \overline{q^h} = q^{-h}.
\end{equation}

The \emph{$*$-involution} is the $\bbQ(q)$-algebra antiautomorphism $*\colon U_q(\frakg) \to U_q(\frakg)$ given by
\begin{equation}
    *e_i = e_i,\quad *f_i = f_i,\quad *q^h = q^{-h}.
\end{equation}

\subsection{Canonical basis}

We recall the definition of the canonical basis of $U_q^-(\frakg)$ following Kashiwara \cite{Kashiwara:CrystalBases-QuantizedUniversalEnvelopingAlgebras}.

\subsubsection*{$\bbZ$-forms}

Let $U_\bbZ(\frakg)$ be the $\bbZ[q, q^{-1}]$-subalgebra of $U_q(\frakg)$ generated by $e_i^{(n)}$, $f_i^{(n)}$, $t_i$, and $\qbinom{t_i; a}{n}$ ($i\in I$, $a\in \bbZ$, $n \ge 0$) .
Let $U_\bbZ^-(\frakg)$ be the $\bbZ[q,q^{-1}]$-subalgebra of $U_q^-(\frakg)$ generated by $f_i^{(n)}$.

\subsubsection*{Bilinear form}

For $i\in I$, let $e_i'\colon U_q^-(\frakg) \to U_q^-(\frakg)$ be the skew-derivation given by
\[
    e_i'f_j = q_i^{-\langle h_i, \alpha_j\rangle} f_j e_i' + \delta_{ij}.
\]
Here $f_j$ acts on $U_q^-(\frakg)$ by left multiplication.

We will use the following formulas in our computations without mention.
\begin{lemma}[\cite{Kashiwara:CrystalBases-QuantizedUniversalEnvelopingAlgebras}]
    \label{lemma:q-boson-commutation-formulas}
    For $n, m \ge 0$, we have
	\[
		e_i'^n f_i^{(m)} = \sum_{k=0}^n q_i^{-2nm+k(n+m)-k(k-1)/2} \qbinom{n}{k}_i f_i^{(m-k)} e_i'^{n-k}
	\]
	and for $j \neq i$
	\[
		e_i'^n f_j^{(m)} = q_i^{-\langle h_i, \alpha_j\rangle nm} f_j^{(m)}e_i'^n.
	\]
\end{lemma}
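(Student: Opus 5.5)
We argue by induction on $n$; the case $n=0$ is trivial and the case $n=1$ is the defining relation $e_i'f_i = q_i^{-2} f_i e_i' + 1$ applied to divided powers. As a preliminary step we establish the case $n=1$ for general $m$:
\[
e_i' f_i^{(m)} = q_i^{-2m} f_i^{(m)} e_i' + q_i^{-(m-1)} f_i^{(m-1)}.
\]
To get this, commute $e_i'$ past $f_i^m$ one factor at a time. Each pass contributes a factor $q_i^{-2}$ together with an inhomogeneous term, so that
\[
e_i' f_i^m = q_i^{-2m} f_i^m e_i' + \Bigl(\sum_{s=0}^{m-1} q_i^{-2s}\Bigr) f_i^{m-1}.
\]
The sum equals $q_i^{-(m-1)}[m]_i$, and dividing by $[m]_i!$ gives the displayed formula. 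This matches the claimed formula at $n=1$: the term $k=0$ has exponent $-2m$, and the term $k=1$ has exponent $-2m+(1+m)=-(m-1)$.

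For the inductive step, write $e_i'^{n+1} f_i^{(m)} = e_i'\bigl(e_i'^n f_i^{(m)}\bigr)$ and substitute the induction hypothesis. Then apply the $n=1$ formula to each $e_i' f_i^{(m-k)}$, which produces a term $f_i^{(m-k)} e_i'^{n+1-k}$ and a term $f_i^{(m-k-1)} e_i'^{n-k}$. Collecting the coefficient of $f_i^{(m-k)} e_i'^{n+1-k}$ gives two contributions. The first is
\[
q_i^{-2nm+k(n+m)-k(k-1)/2} \, q_i^{-2(m-k)} \qbinom{n}{k}_i.
\]
The second comes from index $k-1$ and is
\[
q_i^{-2nm+(k-1)(n+m)-(k-1)(k-2)/2} \, q_i^{-(m-k)} \qbinom{n}{k-1}_i.
\]
Factor out the target power $q_i^{-2(n+1)m+k(n+1+m)-k(k-1)/2}$. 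What remains should be a $q$-Pascal identity of the form
\[
q_i^{a}\qbinom{n}{k}_i + q_i^{-b}\qbinom{n}{k-1}_i = \qbinom{n+1}{k}_i .
\]
The main obstacle is only the exponent bookkeeping needed to confirm that the leftover powers are exactly $q_i^{k}$ and $q_i^{-(n+1-k)}$, up to the symmetric choice. Once that is checked, the standard identity
\[
\qbinom{n+1}{k}_i = q_i^{k}\qbinom{n}{k}_i + q_i^{-(n+1-k)}\qbinom{n}{k-1}_i
\]
closes the induction. The terms with $k>m$ vanish automatically by the convention $f_i^{(n)}=0$ for $n<0$.

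For $j\ne i$ the relation $e_i' f_j = q_i^{-\langle h_i,\alpha_j\rangle} f_j e_i'$ has no inhomogeneous term. Each of the $n$ copies of $e_i'$ passes each of the $m$ factors of $f_j$ with the factor $q_i^{-\langle h_i,\alpha_j\rangle}$, giving $q_i^{-\langle h_i,\alpha_j\rangle nm}$. Dividing by $[m]_j!$ does not affect this, since it is a scalar.
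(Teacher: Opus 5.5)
Your proof is correct. The paper gives no proof of its own here (the lemma is simply quoted from Kashiwara), so there is no competing route to compare. Your induction on $n$, using the $n=1$ case and the $q$-Pascal rule, is a complete and self-contained verification.

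The exponent bookkeeping you deferred closes exactly, with no need for a ``symmetric choice''. Write $E(n,k) := -2nm + k(n+m) - k(k-1)/2$. Then
\[
E(n,k) - 2(m-k) - E(n+1,k) = k, \qquad E(n,k-1) - (m-k) - E(n+1,k) = -(n+1-k).
\]
So the coefficient of $f_i^{(m-k)} e_i'^{\,n+1-k}$ is
\[
q_i^{E(n+1,k)}\bigl(q_i^{k}\qbinom{n}{k}_i + q_i^{-(n+1-k)}\qbinom{n}{k-1}_i\bigr).
\]
The bracket equals $\qbinom{n+1}{k}_i$ by the identity $q_i^{a}[b]_i + q_i^{-b}[a]_i = [a+b]_i$ with $a = k$ and $b = n+1-k$.

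The boundary indices also work out:
\begin{enumerate}
    \item At $k=0$ only the first contribution is present, and it gives $\qbinom{n}{0}_i = 1 = \qbinom{n+1}{0}_i$.
    \item At $k=n+1$ only the second is present, and it gives $q_i^{0}\qbinom{n}{n}_i = 1 = \qbinom{n+1}{n+1}_i$.
\end{enumerate}
Terms involving $f_i^{(m-k)}$ or $f_i^{(m-k-1)}$ with negative exponent vanish consistently on both sides, as you note. This includes the application of the $n=1$ formula to $e_i' f_i^{(0)} = e_i'$.
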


By \cite{Kashiwara:CrystalBases-QuantizedUniversalEnvelopingAlgebras} there is a unique non-degenerate symmetric bilinear form $(\cdot, \cdot)$ on $U_q^-(\frakg)$ with values in $\bbQ$ such that
\begin{equation}
    (1, 1) = 1 \text{ and } (f_iu, v) = (u, e_i'v) \text{ for } u, v\in U_q^-(\frakg).
\end{equation}

\subsubsection*{Canonical basis of $U_q^-(\frakg)$}
\label{section:canonical-basis-definition}

Kashiwara \cite{Kashiwara:CrystalBases-QuantizedUniversalEnvelopingAlgebras} showed that for any $i\in I$
\[
    U_q^-(\frakg) = \bigoplus_{n\ge 0} f_i^{(n)} \ker e_i'.
\]
Let $\te_i, \tf_i\colon U_q^-(\frakg) \to U_q^-(\frakg)$ be the operators defined by
\[
    \te_i(f_i^{(n)}u) = f_i^{(n-1)}u
    \text{ and }
    \tf_i(f_i^{(n)}u) = f_i^{(n+1)}u
    \text{ for } u \in \ker e_i'.
\]
They are commonly referred to as \emph{Kashiwara operators}.

Let $L(\infty)$ be the $A$-submodule of $U_q^-(\frakg)$ generated by $\tf_{i_1}\cdots \tf_{i_r} \cdot 1$ for $r \ge 0$ and $i_1, \dots, i_r \in I$.
Denote by $B(\infty) \subseteq L(\infty)/qL(\infty)$ the images of vectors of the form $\tf_{i_1}\cdots \tf_{i_r} \cdot 1$ and write $b_\infty \in B(\infty)$ for the image of $1$ under the projection $L(\infty) \to L(\infty)/qL(\infty)$.

There is an isomorphism \cite[Theorem 6]{Kashiwara:CrystalBases-QuantizedUniversalEnvelopingAlgebras}
\[
    G\colon L(\infty)/qL(\infty) \to U_\bbQ^-(\frakg) \cap L(\infty) \cap \overline{L(\infty)}
\]
of $\bbQ$-vector spaces.

The image of $B(\infty)$ under $G$ is called the \emph{(lower) global crystal basis} or \emph{canonical basis} of $U_q^-(\frakg)$; we denote it by $\calB(\infty)$.
We refer to its dual basis $\calB(\infty)^\vee$ in the graded dual $U_q^-(\frakg)^\vee$ as the \emph{dual canonical basis}.

\begin{theorem}[{\cite[Theorem 2.1.1]{Kashiwara:CrystalBase-Littelmann-DemazureCharacterFormula}}]
    \label{theorem:crystal-base-star-invariance}
    The crystal base $B(\infty)$ is invariant under the $*$-involution, $*B(\infty) = B(\infty)$.
\end{theorem}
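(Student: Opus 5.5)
This is Kashiwara's theorem that $*B(\infty)=B(\infty)$. Rather than merely cite it, I would reconstruct its proof along the lines of \cite{Kashiwara:CrystalBase-Littelmann-DemazureCharacterFormula}.

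The plan is to prove by induction on $\trace(\xi)$, for $\xi\in Q_+$, the following two statements.
\begin{enumerate}
    \item $*L(\infty)_{-\xi}=L(\infty)_{-\xi}$.
    \item $*B(\infty)_{-\xi}=B(\infty)_{-\xi}$ modulo $qL(\infty)$.
\end{enumerate}

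The main tool is the skew-derivation $e_i'^*:=*\circ e_i'\circ *$. It satisfies a twisted Leibniz rule for right multiplication by $f_j$, and it commutes with $e_j'$ up to a power of $q$ when $j\ne i$. For $i=j$ there is the $q$-boson relation
\[
    e_i'e_i'^*=e_i'^*e_i'
\]
up to a scalar correction, which is standard. One decomposes
\[
    U_q^-=\bigoplus_n f_i^{(n)}\ker e_i',
\]
and likewise with $*$-twisted operators $\tilde f_i^*=*\tilde f_i*$, $\tilde e_i^*=*\tilde e_i*$. The statement is then equivalent to two assertions. First, the lattice $L(\infty)$ is stable under $\tilde f_i^*$ and $\tilde e_i^*$. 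Second, these operators induce maps $B(\infty)\to B(\infty)\sqcup\{0\}$. Once both hold, the set $*B(\infty)$ is generated from $b_\infty$ by the $\tilde f_i^*$. It then lies in $B(\infty)$ and has the same cardinality in each weight, so it equals $B(\infty)$.

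The key step is a "grand loop" comparison of $\tilde f_i$ with $\tilde f_j^*$. For $i\neq j$ one shows that $\tilde f_i$ and $\tilde f_j^*$ commute on $L(\infty)$ modulo $qL(\infty)$. For $i=j$ one shows the following for $u\in\ker e_i'$ written as $u=\sum_n f_i^{(n)}u_n$ in terms of the $*$-decomposition: if $\varepsilon_i^*$ is small compared to $\varepsilon_i$ plus a weight term, then $\tilde f_i^*b=\tilde f_ib$. This requires a simultaneous induction tying these commutation statements to stability of $L(\infty)$. The most convenient way is to pass to the finite-level modules $L(\lambda)$ via the projections $U_q^-\to V(\lambda)$, which are compatible with the crystal bases. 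There one uses a tensor-product embedding
\[
    B(\infty)\hookrightarrow B(\infty)\otimes B(-\infty)\text{-type}
\]
or, following Kashiwara, the embedding $U_q^-\to U_q^-\otimes U_q^-$ given by the comultiplication twisted by $*$. Using this, $\tilde f_i^*$ corresponds to acting on the other tensor factor, and the tensor product rule gives the required comparison.

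I expect the main obstacle to be the $i=j$ case. There one must control cross terms of the form $f_i^{(a)}u f_i^{(b)}$ using Lemma \ref{lemma:q-boson-commutation-formulas} and the symmetric bilinear form. The form is needed to show that the relevant correction terms lie in $qL(\infty)$, via the characterization
\[
    L(\infty)=\{u:(u,L(\infty))\subseteq A\}.
\]
I would first handle this by proving the almost-orthonormality
\[
    (b,b')\equiv\delta_{bb'}\bmod qA
\]
for elements of $L(\infty)$ together with $*$-invariance of the form, namely $(u^*,v^*)=(u,v)$. These two facts give $*L(\infty)=L(\infty)$ directly. Combined with the fact that $*$ commutes with the bar involution, this implies that $*$ permutes $\calB(\infty)$ up to sign. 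Since $\calB(\infty)$ is characterized as $\pm$-orthonormal bar-invariant elements, the last step is to remove the sign. For this I would compare leading coefficients in the expansion with respect to a PBW-type monomial $f_\uli^{(\ula)}$; its image under $*$ is again such a monomial with positive leading coefficient, which forces the sign to be $+$.
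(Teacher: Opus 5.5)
\textbf{The sign step is missing.} The paper does not prove this statement; it quotes it from Kashiwara.

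\textbf{What works.} Your earlier paragraphs are a plan rather than an argument; the actual argument is in your last paragraph, and its first half is sound once stated correctly. The form induced on $L(\infty)/qL(\infty)$ is positive definite over $\bbQ$, with $B(\infty)$ orthonormal. Hence $L(\infty)=\{u\in U_q^-(\frakg)\mid (u,u)\in A\}$, and together with $(u^*,v^*)=(u,v)$ this gives $*L(\infty)=L(\infty)$. The self-duality $L(\infty)=\{u\mid (u,L(\infty))\subseteq A\}$ that you quote is not enough on its own: it only shows that $*L(\infty)$ is another self-dual lattice. Since $*$ fixes every $f_i^{(n)}$, it preserves $U_\bbZ^-(\frakg)$ and commutes with the bar involution. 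So Theorem \ref{theorem:canonical-basis-criterion} gives $*\calB(\infty)\subseteq\pm\calB(\infty)$. Integrality is essential here; bar-invariance and $(u,u)\in 1+qA$ alone do not characterize $\pm\calB(\infty)$. Two smaller inaccuracies: $e_i'$ and $e_j'^*$ commute exactly for all $i,j$, and an element of $\ker e_i'$ should be expanded as $\sum_n u_nf_i^{(n)}$ with $u_n\in\ker e_i'^*$.

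\textbf{The gap.} Removing the sign is precisely the nontrivial content of Kashiwara's theorem, and your argument for it is circular. If $\ula$ is the adapted string of $b$ for a reduced word $\uli$, then $f_\uli^{(\ula)}=G(b)+\sum_{\ula'>\ula}z_{b,b'}G(b')$. Applying $*$ gives $f_{\uli^{\mathrm{op}}}^{(\ula^{\mathrm{op}})}=*G(b)+\sum z_{b,b'}\,{*G(b')}$. Writing $*G(b)=\pm G(\beta)$, the coefficient of $G(\beta)$ in the reversed monomial \emph{is} the unknown sign. Nothing forces it to be $+1$, because $\ula^{\mathrm{op}}$ is in general not the adapted string of $\beta$ for $\uli^{\mathrm{op}}$, so no unitriangularity applies. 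For example, in type $A_2$ with $\uli=(1,2,1)$, the string $(2,1,1)$ is adapted but $(1,1,2)$ is not. A ``positive leading coefficient'' would need a positivity theorem that is not available here.

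\textbf{What is missing.} You need a comparison of $\tf_i^*=*\tf_i*$ with the $\tf_j$ as operators on $L(\infty)$, keeping track of signs.
\begin{itemize}
\item For $j\neq i$ this is easy. $\ker e_j'$ is stable under $e_i'^*$ and under right multiplication by $f_i$, and $\ker e_i'^*$ is stable under left multiplication by $f_j$. This gives $U_q^-(\frakg)=\bigoplus_{n,m}f_j^{(n)}(\ker e_j'\cap\ker e_i'^*)f_i^{(m)}$, on which $\tf_j$ and $\tf_i^*$ commute exactly.
\item This reduces the sign question to elements $b$ with $\varepsilon_j(b)=0$ for all $j\neq i$, for instance $\tf_1\tf_2b_\infty$ in $A_2$ with $i=1$.
\item For those you need the same-index relation: $\tf_i^*b=\tf_ib$ when $\varepsilon_i(b)+\varepsilon_i^*(b)+\langle h_i,\weight b\rangle=0$, and $\tf_i\tf_i^*b=\tf_i^*\tf_ib$ otherwise.
\item Proving this requires analysing the rank-one algebra generated by $e_i'$, $e_i'^*$ and left and right multiplication by $f_i$, inside an induction on the weight.
\end{itemize}
That analysis is the heart of Kashiwara's proof. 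Your proposal only names it (``one shows'') without giving an argument.
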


\begin{corollary}
    \label{corollary:canonical-basis-star-invariance}
    The canonical basis $\calB(\infty)$ is invariant under the $*$-involution; more precisely, $*G(b) = G(*b)$ for any $b \in B(\infty)$.
\end{corollary}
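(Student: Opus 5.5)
We derive the corollary from Theorem \ref{theorem:crystal-base-star-invariance} by checking that the $*$-involution preserves every ingredient in the definition of $G$, and then invoking the uniqueness built into that definition.

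First, $*$ is a $\bbQ(q)$-algebra antiautomorphism fixing each $f_i$. It therefore preserves $U_q^-(\frakg)$, sends $f_i^{(n)}$ to $f_i^{(n)}$, and hence preserves $U_\bbZ^-(\frakg)$ and $U_\bbQ^-(\frakg)$. It commutes with the bar involution, since both fix the generators $f_i$ and the composite $*\circ -$ equals $-\circ *$ on $q$ and on products: bar acts $\bbQ$-linearly with $q \mapsto q^{-1}$, while $*$ is $\bbQ(q)$-linear and reverses products.

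Second, the substantive input from Theorem \ref{theorem:crystal-base-star-invariance} (Kashiwara's Theorem 2.1.1) is that $*L(\infty) = L(\infty)$ and that the induced map on $L(\infty)/qL(\infty)$ permutes $B(\infty)$. I read the statement $*B(\infty) = B(\infty)$ as including the lattice invariance, as in Kashiwara's original formulation. Since $*$ is $\bbQ(q)$-linear, it preserves $qL(\infty)$ and so descends to $L(\infty)/qL(\infty)$. Combining this with the first step, $*$ preserves the lattice-and-form triple $U_\bbQ^-(\frakg)$, $L(\infty)$, $\overline{L(\infty)}$. Consequently it preserves their intersection $E := U_\bbQ^-(\frakg) \cap L(\infty) \cap \overline{L(\infty)}$, on which the projection $\pi\colon E \to L(\infty)/qL(\infty)$ is the inverse of $G$.

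Now let $b \in B(\infty)$. Then $*G(b) \in E$ and $\pi(*G(b)) = *\pi(G(b)) = *b$, because the reduction map commutes with $*$. Since $G$ is the inverse of the isomorphism $\pi$, we obtain $*G(b) = G(*b)$. Because $*b$ again lies in $B(\infty)$, this gives $*\calB(\infty) = \calB(\infty)$.

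The only potentially delicate point is the second step: we must have the lattice invariance $*L(\infty) = L(\infty)$, and not merely invariance of the set $B(\infty)$ of residues, which by itself would be meaningless without a lattice to reduce. This invariance is part of Kashiwara's theorem, so we cite it. Everything else is formal bookkeeping about compatibility with the bar involution and with reduction modulo $q$.
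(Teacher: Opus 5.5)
The paper gives no proof; it presents the corollary as an immediate consequence of Kashiwara's Theorem 2.1.1. Your argument is correct and is exactly the standard unpacking of that deduction: $*$ preserves $U_\bbQ^-(\frakg)$, $L(\infty)$ and $\overline{L(\infty)}$, so it commutes with the inverse $G$ of the reduction map. Your reading of the theorem as including $*L(\infty)=L(\infty)$ matches Kashiwara's original formulation.
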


From Propositions 5.1.2 and 5.1.3 in \cite{Kashiwara:CrystalBases-QuantizedUniversalEnvelopingAlgebras} and the definition of the canonical basis, we obtain the following criterion.
\begin{theorem}
    \label{theorem:canonical-basis-criterion}
    Let $u \in U_q^-(\frakg)$. If $u \in U_\bbZ^-(\frakg)$, $\bar u = u$ and $(u, u) \in 1+qA$, then either $u$ or $-u$ is in $\calB(\infty)$.
\end{theorem}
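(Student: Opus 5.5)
The statement to prove is Theorem~\ref{theorem:canonical-basis-criterion}. The plan is to realise $u$ as $G(b)$ or $-G(b)$ for some $b \in B(\infty)$. We use two facts from Kashiwara's theory:
\begin{itemize}
\item the $\bbQ$-span of $\calB(\infty)$ is $U_\bbQ^-(\frakg) \cap L(\infty) \cap \overline{L(\infty)}$, and $\calB(\infty)$ is a $\bbZ[q,q^{-1}]$-basis of $U_\bbZ^-(\frakg)$;
\item the bilinear form is almost orthonormal on $\calB(\infty)$: $(G(b), G(b')) \in \delta_{b,b'} + qA$ for $b,b' \in B(\infty)$. This is the content of Propositions 5.1.2 and 5.1.3, which also identify $L(\infty)$ as $\{ v \mid (v,v) \in A\}$.
\end{itemize}
Here $A$ denotes the ring of rational functions regular at $q=0$.

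\textbf{Step 1: expand $u$ in the canonical basis.} Since $u \in U_\bbZ^-(\frakg)$, we can write $u = \sum_{b} c_b(q)\, G(b)$ with $c_b \in \bbZ[q,q^{-1}]$, finitely many nonzero. All $G(b)$ are bar-invariant, so $\bar u = u$ forces $\overline{c_b} = c_b$ for every $b$.

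\textbf{Step 2: compute $(u,u)$ to lowest order.} By almost orthonormality,
\[
(u,u) = \sum_{b,b'} c_b c_{b'} (G(b),G(b')) = \sum_b c_b^2 \,(1 + O(q)) + \sum_{b\neq b'} c_b c_{b'}\, O(q).
\]
Let $N$ be the minimal $q$-degree occurring among the nonzero $c_b$. Bar-invariance makes each $c_b$ a Laurent polynomial symmetric under $q \mapsto q^{-1}$, so $N \le 0$.

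Write $c_b = \sum_k c_{b,k} q^k$. The coefficient of $q^{2N}$ in $(u,u)$ is then $\sum_b c_{b,N}^2$. The cross terms, and the $O(q)$ corrections in the diagonal terms, only contribute in degree at least $2N+1$.

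\textbf{Step 3: compare with the hypothesis.} The quantity $\sum_b c_{b,N}^2$ is a positive integer, because it is a sum of squares of integers that are not all zero. Since $(u,u) \in 1 + qA$, this forces $2N \ge 0$, and together with $N \le 0$ we get $N = 0$.

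So each $c_b$ is a symmetric Laurent polynomial with no negative powers of $q$. Hence each $c_b$ is an integer constant, and $\sum_b c_b^2 = 1$. Therefore exactly one $c_b = \pm 1$ and all others vanish, which gives $u = \pm G(b)$.

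\textbf{Main obstacle.} The delicate point is Step 2: one must justify the orders of the error terms precisely, namely that $(G(b),G(b')) \in \delta_{bb'} + qA$ with genuinely no negative powers. This is exactly what Kashiwara's Propositions 5.1.2/5.1.3 supply, via the fact that the form induces the standard inner product on $L(\infty)/qL(\infty)$ under which $B(\infty)$ is orthonormal. Everything else is bookkeeping of lowest-degree terms.
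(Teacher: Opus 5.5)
Your argument is correct, but it takes a different route from the paper. The paper's one-line derivation goes through Kashiwara's lattice:
\begin{itemize}
\item Proposition 5.1.3 gives $u \in L(\infty)$ from $(u,u) \in A$.
\item Bar-invariance then gives $u \in \overline{L(\infty)}$, so $u = G(u_0)$, where $u_0$ is the image of $u$ in $L(\infty)/qL(\infty)$.
\item Orthonormality of $B(\infty)$ for the induced form $(\cdot,\cdot)_0$ (Proposition 5.1.2), together with $(u_0,u_0)_0 = 1$, yields $u_0 = \pm b$.
\end{itemize}

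You instead expand $u$ in $\calB(\infty)$ over $\bbZ[q,q^{-1}]$ from the start and run a leading-term positivity argument on bar-symmetric coefficients. This is essentially the classical characterization of the signed canonical basis. In effect it proves $u \in L(\infty)$ by hand (your $N = 0$). It needs only almost-orthonormality, which follows at once from Proposition 5.1.2 and $G(b) \equiv b \bmod qL(\infty)$. It does not need the characterization $L(\infty) = \{v \mid (v,v) \in A\}$. So that part of your second bullet is never used, and neither is the $\bbQ$-span statement in your first bullet.

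The real trade-off is where integrality enters. Your route assumes up front that $\calB(\infty)$ is a $\bbZ[q,q^{-1}]$-basis of $U_\bbZ^-(\frakg)$. That is the substantial input, more so than the almost-orthonormality you flag as the main obstacle. Given it, your argument makes transparent that $\sum_b c_b^2 = 1$ over $\bbZ$ is what forces a single $\pm G(b)$.

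The paper's route is shorter but tacitly needs the same kind of input. To pass from $(u_0,u_0)_0 = 1$ to $u_0 = \pm b$, one must know that $u_0$ lies in the $\bbZ$-span of $B(\infty)$, not merely the $\bbQ$-span. The paper leaves this under ``the definition of the canonical basis''.
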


\section{Tight monomial cones}
\label{section:tight-monomial-cones}

In this section we recall the definition of Lusztig's tight monomial cone $L_\uli$ and the adapted string cone $S_\uli(\infty)$ defined by Littelmann \cite{Littelmann:Cones-Crystals-Patterns}, and show that $L_\uli$ embeds into $S_\uli(\infty)$.
This allows us to give a sufficient condition for a monomial in $L_\uli$ to belong to the canonical basis (Proposition \ref{proposition:lusztig-cone-canonical-basis}).
We then use this condition to find a family of monomials which belong to the canonical basis (Proposition \ref{proposition:family-of-monomials-in-canonical-basis}).

Let $\uli = (i_1, \dots, i_r)$ be a sequence in $I$.
The \emph{tight monomial cone} $L_\uli \subseteq \bbZ_{\ge0}^r$ is the cone consisting of tuples $(a_1, \dots, a_r)\in \bbZ_{\ge0}^r$ such that for every pair $1 \le p < q \le r$ with $i_p = i_q := i$ we have
\begin{equation}
    \label{equation:lusztig-cone-condition}
    a_p + a_q + \sum_{p<k<q} \langle h_i, \alpha_{i_k}\rangle a_k \le 0.
\end{equation}
We refer to this as the \emph{tight monomial cone condition}.
For convenience, we also say that a monomial $f_{\uli}^{(\ula)} = f_{i_1}^{(a_1)} \cdots f_{i_r}^{(a_r)}$ belongs to the tight monomial cone $L_\uli$ if $(a_1, \dots, a_r) \in L_\uli$.

\subsection{Embedding into adapted string cones}
Let $w\in W$ and let $\uli = (i_1, \dots, i_r)$ be a sequence in $I$ such that $s_\uli$ a reduced decomposition of $w$.
A vector $\ula = (a_1, \dots, a_r) \in \bbZ_{\ge0}^r$ is called an \emph{adapted string} of $b\in B(\lambda)$ if
\[
    b = \tf_{i_1}^{a_1} \cdots \tf_{i_r}^{a_r} b_\lambda
\]
and for all $k \in [1, r]$
\[
    \te_{i_k} \tf_{i_{k+1}}^{a_{k+1}} \cdots \tf_{i_r}^{a_r} b_\lambda = 0.
\]
We denote by $S_\uli(\lambda)$ the set of all $\ula = (a_1, \dots, a_r) \in \bbZ_{\ge0}^r$ such that $\ula$ is an adapted string of some $b\in B(\lambda)$; and we put $S_\uli(\infty) = \bigcup S_\uli(\lambda)$, where the union ranges over all dominant weights $\lambda$.

Let $C_\uli(\infty)$ be the real cone spanned by $S_\uli(\infty)$ and let $C_\uli(\lambda)$ be the real polytope defined by the inequalities
\begin{equation}
    \label{equation:string-polytope-condition}
    a_p \le \langle h_{i_p}, \lambda\rangle - \sum_{p<k\le r} \langle h_{i_p}, \alpha_{i_k}\rangle a_k \quad \text{for } p \in [1, r].
\end{equation}

\begin{proposition}[{\cite[Proposition 1.5]{Littelmann:Cones-Crystals-Patterns}}]
    The following holds:
    \begin{enumerate}
        \item $C_\uli(\infty)$ is a rational cone and $S_\uli(\infty)$ is the set of its integral points.
        \item $C_\uli(\lambda)$ is a rational polytope and $S_\uli(\lambda)$ is the set of its integral points.
    \end{enumerate}
\end{proposition}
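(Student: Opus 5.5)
The plan is to treat part (2) as a consequence of part (1), and to do the real work on the cone $C_\uli(\infty)$.

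\emph{Reduction of (2) to (1).} Fix $\lambda \in P_+$. Using Kashiwara's strict embedding $B(\lambda) \hookrightarrow B(\infty) \otimes T_\lambda$, I would identify $\te_i$ on $B(\lambda)$ with $\te_i$ on $B(\infty)$, and $\tf_i$ with $\tf_i$ on $B(\infty)$ as long as the result is nonzero. Then adapted strings of $b \in B(\lambda)$ are adapted strings in $B(\infty)$. The only extra requirement is that no step of $\tf_{i_1}^{a_1}\cdots\tf_{i_r}^{a_r}b_\lambda$ becomes zero. At step $p$ the element $\tf_{i_{p+1}}^{a_{p+1}}\cdots\tf_{i_r}^{a_r}b_\lambda$ has $\varepsilon_{i_p}=0$ by adaptedness, and it has weight $\lambda-\sum_{k>p}a_k\alpha_{i_k}$. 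Hence
\[
    \varphi_{i_p} = \langle h_{i_p},\lambda\rangle - \sum_{p<k\le r}\langle h_{i_p},\alpha_{i_k}\rangle a_k ,
\]
and applying $\tf_{i_p}^{a_p}$ is nonzero exactly when (\ref{equation:string-polytope-condition}) holds. This gives
\[
    S_\uli(\lambda) = S_\uli(\infty)\cap C_\uli(\lambda)\cap\bbZ^r .
\]
So once (1) gives $S_\uli(\infty)=C_\uli(\infty)\cap\bbZ^r$ for a rational cone, $S_\uli(\lambda)$ is the set of integral points of the rational polytope $C_\uli(\infty)\cap C_\uli(\lambda)$. (Strictly, the polytope in (2) should be read as this intersection; boundedness follows because the weight of $b$ is bounded below by $w_0\lambda$.)

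\emph{Part (1): monoid structure.} Here I would use Littelmann's path model. For $\lambda,\mu\in P_+$ I would first show $S_\uli(\lambda)+S_\uli(\mu)\subseteq S_\uli(\lambda+\mu)$. The idea is that the $\uli$-adapted string of a concatenation $\pi * \pi'$ of paths taken from the highest-weight components is computed letter by letter. This uses the fact that for a reduced $\uli$ the adapted strings are the greedy strings, i.e.\ $a_k=\varepsilon_{i_k}$ at each stage, together with the tensor product rule. The second ingredient is stretching: $\pi\mapsto N\pi$ sends $B(\lambda)$ to $B(N\lambda)$ and multiplies $\varepsilon_i$, $\varphi_i$ and all strings by $N$. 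Together these show that $S_\uli(\infty)$ is a monoid and is saturated, i.e.\ $N\ula\in S_\uli(\infty)$ implies $\ula\in S_\uli(\infty)$. The saturation direction needs care: I would argue by dividing the stretched path, using integrality of the minima of the functions $\langle h_i,\pi(t)\rangle$.

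\emph{Part (1): finite generation and rationality.} It remains to show that the real cone is cut out by finitely many rational inequalities. I would describe $S_\uli(\infty)$ recursively. The vector $\ula$ lies in $S_\uli(\infty)$ iff $(a_2,\dots,a_r)\in S_{(i_2,\dots,i_r)}(\infty)$ and the element $b'$ it defines satisfies $\varepsilon_{i_1}(b')=0$, a condition independent of $a_1$. The function $\varepsilon_{i_1}$ of $b'$ is a piecewise-linear function of $(a_2,\dots,a_r)$ with integral coefficients. This follows from the rank-two string formulas combined with the piecewise-linear transition maps between strings for different reduced words (Berenstein–Zelevinsky), which move $i_1$ to the front. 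By induction, $S_\uli(\infty)$ is then the set of integer points of a finite union of rational polyhedral pieces. Since the monoid is saturated and closed under addition, this union is convex, and hence it is a single rational cone.

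I expect the main obstacle to be this last step: proving piecewise-linearity of $\varepsilon_{i_1}$ in the string coordinates with finitely many linear pieces. I would first try to reduce it to rank two through braid moves, where Lusztig's and Xi's explicit formulas are available.
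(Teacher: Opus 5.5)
The paper gives no proof of this statement; it defers to Littelmann. Your reduction of (2) to (1) is correct. So is your insistence that the polytope be read as $C_\uli(\infty)\cap C_\uli(\lambda)$: with the $\lambda$-inequalities alone the claim is false. For example, in type $A_2$ with $\uli=(1,2,1)$ and $\lambda=2\varpi_1$, the point $(0,0,1)$ satisfies all of them but is not adapted, since $\te_1\tf_1 b_\lambda=b_\lambda\neq 0$. The stretching and saturation step is also fine.

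\textbf{The gap.} The genuine gap is the monoid property $S_\uli(\lambda)+S_\uli(\mu)\subseteq S_\uli(\lambda+\mu)$, on which your final step rests. A set of lattice points in a finite union of rational cones that is stable under $\ula\mapsto N\ula$ and $N\ula\mapsto\ula$ can be two rays, so without additivity nothing forces convexity. The mechanism you propose for additivity is false:
\begin{enumerate}
\item Concatenation realizes the tensor product rule, so $\varepsilon_i(\pi*\pi')=\max\bigl(\varepsilon_i(\pi),\,\varepsilon_i(\pi')-\langle h_i,\pi(1)\rangle\bigr)$, which is not additive.
\item Equivalently, if $\varepsilon_i(x)=\varepsilon_i(y)=0$, then $\tf_i^{a+b}(x\otimes y)$ puts $\min(a+b,\varphi_i(x))$ of the letters on $x$, not $a$. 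So strings split letter by letter only when the relevant $\lambda$-inequalities are tight.
\item A concrete failure already occurs for $\mathfrak{sl}_2$. Take $\lambda=\mu=\alpha$ and $\pi=\pi'=\tf\pi_\alpha$, each of string $1$. The path $\pi*\pi'$ has $\langle h,\cdot\rangle$-profile $0\to-1\to0\to-1\to0$, so its string is $1$, not $2$. It is not even in the component of $\pi_\alpha*\pi_\alpha$, whose weight-zero element has string $2$.
\end{enumerate}
Knowing that adapted strings are greedy does not repair any of this.

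\textbf{What you need instead.} You need a real proof that strings add. One clean route is dual. Write $b^\vee\in U_q^-(\frakg)$ for the element dual to $G(b)$ under $(\cdot,\cdot)$, and put $e_i'^{(n)}:=e_i'^n/[n]_i!$.
\begin{enumerate}
\item $e_i'$ is a twisted derivation: $e_i'(uv)=e_i'(u)v+q_i^{\langle h_i,\weight u\rangle}u\,e_i'(v)$.
\item By adjointness and the $f_i^{(n)}$-action on $\calB(\infty)$, one has $e_i'^n b^\vee=0$ if and only if $n>\varepsilon_i(b)$, and $e_i'^{(\varepsilon_i(b))}b^\vee=(\te_i^{\varepsilon_i(b)}b)^\vee$.
\item Hence, for $x,y$ with adapted strings $\ula,\ulb$, higher powers of $e_{i_1}'$ kill $x^\vee y^\vee$, and $e_{i_1}'^{(a_1+b_1)}(x^\vee y^\vee)\in q^{\bbZ}(\te_{i_1}^{a_1}x)^\vee(\te_{i_1}^{b_1}y)^\vee\neq 0$.
\item Peeling off the letters of $\uli$ inductively then shows that the dual canonical expansion of $x^\vee y^\vee$ contains an element with adapted string $\ula+\ulb$. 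This is essentially Caldero's multiplicativity argument.
\end{enumerate}
Combined with your reduction, this gives $S_\uli(\lambda)+S_\uli(\mu)\subseteq S_\uli(\lambda+\mu)$. Alternatively, your finite-generation step already leans on Berenstein--Zelevinsky's piecewise-linear transition maps. You could instead cite their description of the string cone by $\uli$-trail inequalities, which gives (1) directly and makes the recursion unnecessary.
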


Let $>$ denote the lexicographic order on $S_\uli(\infty)$ and $S_\uli(\lambda)$.

\begin{proposition}[{\cite[Proposition 10.3]{Littelmann:Cones-Crystals-Patterns}}]
    \label{proposition:base-change-canonical-basis-string-basis}
    Let $w\in W$ and let $\uli = (i_1, \dots, i_r)$ be a sequence in $I$ such that $s_\uli$ a reduced decomposition of $w$.
    Let $b \in B_w(\infty)$ and let $\ula = (a_1, \dots, a_r)$ be its adapted string.
    Then,
    \[
        f_{i_1}^{(a_1)} \cdots f_{i_r}^{(a_r)} = G(b) + \sum z_{b,b'} G(b'),
    \]
    where $z_{b,b'} \in qA$ is zero unless $\ula' > \ula$ for the adapted string $\ula'$ of $b'$.

    In particular, the set
    \[
        \{ f_{i_1}^{(a_1)} \cdots f_{i_r}^{(a_r)} \mid (a_1, \dots, a_r) \in S_\uli(\infty) \}
    \]
    is a basis of $U_q^-(\frakg, w)$.
\end{proposition}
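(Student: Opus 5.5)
The plan is an induction on the length $r$ of the reduced word $\uli = (i_1,\dots,i_r)$, peeling off the leftmost divided power. Write $\uli' = (i_2,\dots,i_r)$ and $w' = s_{i_2}\cdots s_{i_r}$. If $\ula$ is the adapted string of $b\in B_w(\infty)$, then $\ula' = (a_2,\dots,a_r)$ is the adapted string of $b' := \te_{i_1}^{a_1} b \in B_{w'}(\infty)$, and $\varepsilon_{i_1}(b') = 0$ by the adaptedness condition for $k=1$; here $\varepsilon_{i_1}(c)$ denotes the largest $n$ with $\te_{i_1}^n c \neq 0$. So $b = \tf_{i_1}^{a_1} b'$ with $\varepsilon_{i_1}(b') = 0$. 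By the induction hypothesis,
\[
    f_{i_2}^{(a_2)}\cdots f_{i_r}^{(a_r)} = G(b') + \sum_{c} z_{b',c}\, G(c),\qquad z_{b',c}\in qA,
\]
where $c$ ranges over elements of $B_{w'}(\infty)$ whose adapted string for $\uli'$ is lexicographically larger than $\ula'$. Multiplying on the left by $f_{i_1}^{(a_1)}$ reduces the claim to understanding $f_{i_1}^{(a_1)}G(c)$ for $c\in B_{w'}(\infty)$.

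The key input is Kashiwara's description of the action of divided powers on the global basis. For any $c\in B(\infty)$, the element $f_i^{(n)}G(c)$ lies in $U^-_\bbZ(\frakg)\cap L(\infty)$. Its expansion in $\calB(\infty)$ is
\[
    f_i^{(n)}G(c)=\qbinom{\varepsilon_i(c)+n}{n}_i G(\tf_i^n c)+\sum_{c'} y_{c'} G(c'),
\]
where $y_{c'}\in\bbZ[q,q^{-1}]$ is nonzero only if $\varepsilon_i(c')>\varepsilon_i(c)+n$.

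Applied to $c=b'$, where $\varepsilon_{i_1}(b')=0$, the leading coefficient is $1$. Since $f_{i_1}^{(a_1)}G(b')\equiv \tf_{i_1}^{a_1}b' \pmod{qL(\infty)}$, all the other coefficients $y_{c'}$ lie in $qA$. For the correction terms $z_{b',c}\,f_{i_1}^{(a_1)}G(c)$, the factor $z_{b',c}\in qA$ combined with $f_{i_1}^{(a_1)}L(\infty)\subseteq L(\infty)$ shows that every coefficient in their $\calB(\infty)$-expansion again lies in $qA$. Moreover, all these elements lie in $U_q^-(\frakg,w)$, because $f_{i_1}^{(a_1)}U_q^-(\frakg,w')\subseteq U_q^-(\frakg,w)$. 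Hence only $G(b'')$ with $b''\in B_w(\infty)$ occur.

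The main obstacle is the order statement: every $b''\neq b$ that occurs must have an adapted string lexicographically larger than $\ula$. For an element $c''$ with adapted string $(a''_1,\dots)$, the first entry is $a''_1=\varepsilon_{i_1}(c'')$, and the remaining entries form the adapted string of $\te_{i_1}^{\varepsilon_{i_1}(c'')}c''$.

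I split the terms into three cases.
\begin{enumerate}
    \item Terms from $f_{i_1}^{(a_1)}G(b')$ other than $G(b)$ have $\varepsilon_{i_1}>a_1$, so they are strictly larger already in the first entry.
    \item In $f_{i_1}^{(a_1)}G(c)$, the $G(c')$ with $c'\ne\tf_{i_1}^{a_1}c$ have $\varepsilon_{i_1}(c')>\varepsilon_{i_1}(c)+a_1\ge a_1$, so they are also larger.
    \item The remaining term $\tf_{i_1}^{a_1}c$ has $\varepsilon_{i_1} = \varepsilon_{i_1}(c)+a_1$. If $\varepsilon_{i_1}(c)>0$ it is larger in the first entry.
\end{enumerate}

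The delicate subcase is $\varepsilon_{i_1}(c)=0$. Then the adapted string of $\tf_{i_1}^{a_1}c$ is $(a_1,\text{string of }c)$, and the string of $c$ is lexicographically larger than $\ula'$ by induction, so this term is larger as well. What I must verify carefully here is that $\te_{i_1}^{a_1}\tf_{i_1}^{a_1}c=c$ is consistent with the Demazure property $c\in B_{w'}(\infty)$. This consistency is exactly what ensures that the tail of the adapted string is computed in $B_{w'}(\infty)$ relative to $\uli'$. Finally, the basis claim follows: the transition matrix from the monomials to $\{G(b)\mid b\in B_w(\infty)\}$ is unitriangular with respect to the lexicographic order, and $\{G(b)\}$ is a basis of $U_q^-(\frakg,w)$.
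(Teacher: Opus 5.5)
\textbf{The gap.} Your proof breaks exactly where it is supposed to produce the bounds $z_{b,b'}\in qA$. Three of your assertions are false:
\begin{itemize}
\item $f_i^{(n)}G(c)\in L(\infty)$ for every $c$;
\item $f_{i_1}^{(a_1)}L(\infty)\subseteq L(\infty)$;
\item $f_{i_1}^{(a_1)}G(b')\equiv \tf_{i_1}^{a_1}b' \pmod{qL(\infty)}$ whenever $\varepsilon_{i_1}(b')=0$.
\end{itemize}
Only $\tf_i$ preserves $L(\infty)$. On $f_i^{(k)}\ker e_i'$ the operator $f_i^{(n)}$ acts as $\qbinom{n+k}{n}_i\tf_i^n$, and this coefficient lies in $q_i^{-nk}(1+qA)$, not in $A$. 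Your own leading coefficient $\qbinom{\varepsilon_i(c)+n}{n}_i$ already shows this. Also, $\varepsilon_{i_1}(b')=0$ does not put $G(b')$ into $\ker e_{i_1}'$. Its components $f_{i_1}^{(k)}u_k$ with $k\ge 1$ are merely in $qL(\infty)$, and the $q$-binomials destroy this.

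\textbf{A counterexample.} Take type $A_2$, $\uli=(1,2,1)$, $\ula=(1,1,1)$. Then $G(b)=f_2f_1^{(2)}$, $b'=\tf_2\tf_1b_\infty$, and $G(b')=f_2f_1$, so the induction hypothesis has no correction terms at all. Nevertheless
\[
    f_1G(b')=f_1f_2f_1=f_2f_1^{(2)}+f_1^{(2)}f_2 .
\]
Here $f_1^{(2)}f_2$ is the canonical basis element with string $(2,1,0)>(1,1,1)$, and it occurs with coefficient $1\notin qA$.

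\textbf{Why this cannot be repaired.} The clause $z_{b,b'}\in qA$ in the statement as written is itself false; the paper gives no proof and only cites Littelmann. The monomial lies in $U_\bbZ^-(\frakg)$ and is bar-invariant, and so are all the $G(b')$. Hence the $z_{b,b'}$ are bar-invariant elements of $\bbZ[q,q^{-1}]$, and the only bar-invariant element of $\bbZ[q,q^{-1}]\cap qA$ is $0$. So the clause would make every monomial with adapted exponent a canonical basis element. That contradicts the example above and also the paper's own Corollary~\ref{corollary:l1-in-canonical-basis}, where $\xi^{[1]}=(\xi^{[1]}-\theta^{[2]})+\theta^{[2]}$ is a sum of two canonical basis elements with coefficient $1$.

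\textbf{What your argument does prove.} Drop every $qA$ assertion and work with coefficients in $\bbZ[q,q^{-1}]$. Your argument then establishes the unitriangular expansion
\[
f_\uli^{(\ula)}=G(b)+\sum_{\ula'>\ula}z_{b,b'}G(b'), \qquad z_{b,b'}\in\bbZ[q,q^{-1}] \text{ bar-invariant}.
\]
This yields the basis statement. It is also all the paper actually uses: the coefficient $1$ of $G(b)$ fixes the sign in Proposition~\ref{proposition:lusztig-cone-canonical-basis} and Corollary~\ref{corollary:l1-in-canonical-basis}. Your three-case order argument is correct for this purpose.

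Your ``delicate subcase'' is not actually delicate. Strings are defined by iterating $\varepsilon$ and $\te$. So if $\varepsilon_{i_1}(c)=0$, the $\uli$-string of $\tf_{i_1}^{a_1}c$ is simply $a_1$ followed by the $(i_2,\dots,i_r)$-string of $c$. There is no Demazure compatibility to check.
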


\begin{proposition}
    \label{proposition:lusztig-cone-string-cone}
    Let $\uli = (i_1, \dots, i_r)$ be a reduced word.
    Then, $L_\uli \subseteq S_\uli(\infty)$.
\end{proposition}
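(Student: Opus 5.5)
The plan is to check directly that every $\ula \in L_\uli$ is an adapted string. Fix $\ula = (a_1,\dots,a_r) \in L_\uli$ and a dominant weight $\lambda$ with all $\langle h_i,\lambda\rangle$ very large; since $S_\uli(\infty)$ is the union of the $S_\uli(\lambda)$, it suffices to show $\ula \in S_\uli(\lambda)$. Put $b_{r+1} = b_\lambda$ and $b_k = \tf_{i_k}^{a_k} b_{k+1}$. Two things must be verified for each $k$. First, $\te_{i_k} b_{k+1} = 0$, i.e.\ $\varepsilon_{i_k}(b_{k+1}) = 0$. Second, $b_k \neq 0$, i.e.\ $\varphi_{i_k}(b_{k+1}) \ge a_k$. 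Since $\varepsilon_{i_k}(b_{k+1})=0$, the second condition says exactly $\langle h_{i_k}, \weight b_{k+1}\rangle \ge a_k$. This is the inequality \eqref{equation:string-polytope-condition}, which holds automatically once $\lambda$ is large. So the whole content is the vanishing of $\varepsilon_{i_k}(b_{k+1})$.

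I would control $\varepsilon_i$ along the sequence by a downward induction on $k$. The target is the following claim: for every $k$ and every $i \in I$,
\[
\varepsilon_i(b_{k}) \le \max\Bigl(0,\ \max_{\substack{k \le p \le r\\ i_p = i}} \Bigl(a_p + \sum_{k\le m<p} \langle h_i,\alpha_{i_m}\rangle a_m\Bigr)\Bigr).
\]
For the induction step, I would use the standard effect of $\tf_j^{a}$ on $\varepsilon_i$. If $j \neq i$, then $\varepsilon_i$ can only change through the weight shift. The cleanest way to make this precise is the tensor product rule for the embedding $B(\lambda) \hookrightarrow B(\infty)\otimes T_\lambda$ together with Kashiwara's embedding $B(\infty)\hookrightarrow B_{j}\otimes B(\infty)$. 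This gives
\[
\varepsilon_i(\tf_j^{a} b) \le \max\bigl(\varepsilon_i(b),\ \varepsilon_i(b) + \langle h_i,\alpha_j\rangle a\bigr)\le \varepsilon_i(b).
\]
The last step uses $\langle h_i,\alpha_j\rangle \le 0$; more precisely, the bound in the claim shifts by $\langle h_i,\alpha_j\rangle a$ inside each term. If $j = i$, then $\varepsilon_i(\tf_i^{a} b) = \varepsilon_i(b) + a$.

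Given the claim, apply it to $b_{k+1}$ with $i = i_k$. Every term appearing has the form $a_q + \sum_{k<m<q}\langle h_i,\alpha_{i_m}\rangle a_m$ with $i_q = i_k$ and $q > k$. By the tight monomial cone condition \eqref{equation:lusztig-cone-condition} for the pair $(k,q)$, this quantity is at most $-a_k \le 0$. Hence $\varepsilon_{i_k}(b_{k+1}) = 0$, as required.

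The main obstacle is to justify the inequality for $j\neq i$ in the precise form needed, with the negative shift $\langle h_i,\alpha_j\rangle a$ actually propagating into the bound, rather than just the trivial bound $\varepsilon_i(\tf_j b) = \varepsilon_i(b)$, which is false in general. My first attempt would be the signature rule in $B(\lambda)\subseteq B(\varpi_{i_1})^{\otimes N}$-type tensor models, or Nakashima--Zelevinsky piecewise-linear formulas for $\varepsilon_i$ via Kashiwara's embedding. In those formulas, $\varepsilon_i$ is a maximum of linear forms in the string coordinates, and one checks that each linear form is bounded as in the claim. Reducedness of $\uli$ would be used only to guarantee that the strings live in the Demazure crystal $B_w(\lambda)$, so that Proposition \ref{proposition:base-change-canonical-basis-string-basis} applies afterwards.
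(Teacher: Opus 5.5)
Your reduction is correct: for $\lambda$ large the inequalities \eqref{equation:string-polytope-condition} guarantee $b_k\neq 0$, and the entire content of the proposition is $\varepsilon_{i_k}(b_{k+1})=0$. The paper's own proof does not help with this step. It takes $\lambda_i=\max_{i_k=i}a_k$ and checks only \eqref{equation:string-polytope-condition}, relying on a version of Littelmann's Proposition~1.5 in which $C_\uli(\lambda)$ is cut out by these inequalities alone. In Littelmann, the string polytope is $C_\uli(\infty)$ intersected with these half-spaces. For example, $(0,0,1)$ for $\uli=(1,2,1)$ in type $A_2$ satisfies them for $\lambda=2\varpi_1$ but is not an adapted string.

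Your proof has a genuine gap at exactly this step, in the induction for $i\neq i_k$. What you actually derive, $\varepsilon_i(\tf_j^a b)\le\varepsilon_i(b)$, is true in $B(\infty)$. But it only gives $\varepsilon_{i_k}(b_{k+1})\le a_q$, where $q>k$ is the next occurrence of $i_k$. The sharper rule you need is: from $\varepsilon_i(b)\le\max(0,M)$, conclude $\varepsilon_i(\tf_j^a b)\le\max(0,M+\langle h_i,\alpha_j\rangle a)$. This is false for general $b$. In type $A_2$, $b=\tf_2\tf_1 b_\infty$ has $\varepsilon_2(b)=1$, yet $\varepsilon_2(\tf_1^a b)=1$ for every $a$. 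So no local rule for $\tf_j^a$, fed by an induction hypothesis that only bounds $\varepsilon_i(b_k)$, can close the argument. The hypothesis has to record which linear form realises $\varepsilon_i$, and the tight monomial cone condition has to be used at every step, not only at the end.

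The Nakashima--Zelevinsky route you mention does work once it is set up this way.
\begin{itemize}
\item Let $\iota=(\dots,\iota_2,\iota_1)$ start with the reversed word, $\iota_{r+1-p}=i_p$ for $1\le p\le r$. Continue it arbitrarily, with each index occurring infinitely often and no two consecutive letters equal.
\item Prove by downward induction on $k$ that $\Psi_\iota(b_k)$ has coordinates $x_{r+1-p}=a_p$ for $k\le p\le r$ and $0$ elsewhere.
\item In the step, take $i=i_k$. The linear forms $\sigma$ at the new position $r+1-k$ and at all positions beyond it are $0$.
\item At an old position $r+1-p$ (with $p>k$, $i_p=i_k$) one has $\sigma=a_p+\sum_{k<m<p}\langle h_{i_k},\alpha_{i_m}\rangle a_m\le -a_k$ by \eqref{equation:lusztig-cone-condition}.
\item Each application of $\tf_{i_k}$ at the new position raises these old values by $2$ and the new value by $1$. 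After $t<a_k$ applications the old values are at most $-a_k+2t<t$, so $\tf_{i_k}$ always acts at position $r+1-k$.
\end{itemize}
With this in hand, $\varepsilon_i(b_k)$ equals exactly the right-hand side of your claim, the $0$ coming from the tail of $\iota$. Then $\varepsilon_{i_k}(b_{k+1})=0$ follows as you say. Note that these coordinates are the $*$-string data of $b_k$ for the reversed word, not its string coordinates. This is genuinely extra information carried through the induction, and it is what your bound-only hypothesis was missing.
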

\begin{proof}
    Let $\ula = (a_1, \dots, a_r) \in L_\uli$.
    We construct a dominant weight $\lambda$ such that $\ula$ is an element of the string polytope $S_\uli(\lambda)$; then the proposition follows.
    For $i\in I$, define $\lambda_i := \max_{i_k=i} a_k$ (with the convention that $\lambda_i = 0$ if $i$ does not occur in $\uli$) and put $\lambda := \sum_{i\in I} \lambda_i \varpi_i$.
    Let $p \in [1, r]$ and let $q$ be maximal such that $i_q = i_p$.
    If $p = q$, then
    \[
        a_p \le \langle h_{i_p}, \lambda\rangle \le \langle h_{i_p}, \lambda\rangle - \sum_{p<k\le r} \langle h_{i_p}, \alpha_{i_k}\rangle a_k;
    \]
    and if $p < q$, we have
    \begin{align*}
        a_p &\le -a_q - \sum_{p<k<q} \langle h_{i_p}, \alpha_{i_k}\rangle a_k \\
        &\le \langle h_{i_p}, \lambda\rangle -2a_q - \sum_{p<k<q} \langle h_{i_p}, \alpha_{i_k}\rangle a_k \\
        &\le \langle h_{i_p}, \lambda\rangle - \sum_{p<k\le r} \langle h_{i_p}, \alpha_{i_k}\rangle a_k.
    \end{align*}
    This shows that the all inequalities (\ref{equation:string-polytope-condition}) for the string polytope are satisfied.
\end{proof}

\begin{proposition}
    \label{proposition:lusztig-cone-canonical-basis}
    Let $\uli = (i_1, \dots, i_r)$ be a reduced word.
    For $(a_1, \dots, a_r) \in L_\uli$ assume that $(f_\uli^{(\ula)}, f_\uli^{(\ula)}) \in 1 + qA$.
    Then, $f_\uli^{(\ula)}$ is an element of the canonical basis of $U_q^-(\frakg)$.
\end{proposition}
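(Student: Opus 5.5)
We want to show $f_\uli^{(\ula)} \in \calB(\infty)$. The natural route combines Proposition \ref{proposition:lusztig-cone-string-cone} with Proposition \ref{proposition:base-change-canonical-basis-string-basis}, though the criterion of Theorem \ref{theorem:canonical-basis-criterion} alone nearly suffices; the work is in fixing the sign.

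First, the monomial $u := f_\uli^{(\ula)}$ lies in $U_\bbZ^-(\frakg)$, since it is a product of divided powers. It is also bar-invariant, since each $f_i^{(n)}$ is: the bar involution fixes $f_i$ and $[n]_i!$. Together with the hypothesis $(u,u) \in 1+qA$, Theorem \ref{theorem:canonical-basis-criterion} gives $u \in \calB(\infty)$ or $-u \in \calB(\infty)$. It remains to exclude $-u \in \calB(\infty)$.

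For the sign, use the reducedness of $\uli$ and Proposition \ref{proposition:lusztig-cone-string-cone}: $\ula \in L_\uli \subseteq S_\uli(\infty)$, so $\ula$ is the adapted string of some $b \in B_w(\infty)$ with $w = s_\uli$. Proposition \ref{proposition:base-change-canonical-basis-string-basis} then yields
\[
u = G(b) + \sum_{b'} z_{b,b'} G(b'), \qquad z_{b,b'} \in qA .
\]
Suppose $-u = G(b'')$ for some $b''$. Comparing coefficients in the basis $\calB(\infty)$, the coefficient of $G(b)$ in $u$ is $1$ if $b'' \neq b$, and $-1$ if $b'' = b$. The second case gives $1 = -1$. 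In the first case, the coefficient of $G(b)$ in $u$ would have to be $0$, contradicting that it equals $1$. Hence $u \in \calB(\infty)$; in fact $u = G(b)$ and all $z_{b,b'}$ vanish.

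One could avoid Theorem \ref{theorem:canonical-basis-criterion} altogether. By almost-orthonormality of $\calB(\infty)$, $(G(b),G(b'))\in\delta_{bb'}+qA$, so
\[
(u,u) \in 1 + \sum_{b'} z_{b,b'}^2 + qA .
\]
One would then argue that nonzero $z_{b,b'}$ force extra positive contributions. The sign argument above is simpler, so I would use it.

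The main obstacle is the membership $\ula\in S_\uli(\infty)$. This is exactly what makes the expansion in Proposition \ref{proposition:base-change-canonical-basis-string-basis} available, and it is already provided by Proposition \ref{proposition:lusztig-cone-string-cone}. The other small checks are routine: bar-invariance and integrality of $u$.
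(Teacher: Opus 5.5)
Your proof is correct and is the paper's argument: Theorem \ref{theorem:canonical-basis-criterion} gives $\pm f_\uli^{(\ula)}\in\calB(\infty)$, and the inclusion $L_\uli\subseteq S_\uli(\infty)$ from Proposition \ref{proposition:lusztig-cone-string-cone}, together with the unitriangularity in Proposition \ref{proposition:base-change-canonical-basis-string-basis}, fixes the sign; you only spell out the coefficient comparison that the paper leaves implicit, though your sentence there should say that the coefficient of $G(b)$ in $-G(b'')$ is $0$ or $-1$, not that the coefficient ``in $u$'' is $1$ or $-1$. The aside about bypassing the criterion would not work as sketched: since $z_{b,b'}\in qA$ in Proposition \ref{proposition:base-change-canonical-basis-string-basis}, the terms $z_{b,b'}^2$ already lie in $qA$, so the hypothesis $(u,u)\in 1+qA$ carries no information there, and what actually does the work is bar-invariance plus integrality, i.e.\ the criterion.
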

\begin{proof}
    By definition we have $f_\uli^{(\ula)} \in U_\bbZ^-(\frakg)$ and $\overline{f_\uli^{(\ula)}} = f_\uli^{(\ula)}$.
    Together with the assumption that $(f_\uli^{(\ula)}, f_\uli^{(\ula)}) \in 1 + qA$, this means all conditions of Theorem \ref{theorem:canonical-basis-criterion} are fulfilled.
    Thus, $f_\uli^{(\ula)}$ is an element of the canonical basis up to a sign.
    Since $\uli$ is a reduced word, Proposition \ref{proposition:lusztig-cone-string-cone} implies that $(a_1, \dots, a_r) \in S_\uli(\infty)$.
    Finally, Proposition \ref{proposition:base-change-canonical-basis-string-basis} allows us to conclude the proposition.
\end{proof}

We end this section by presenting two technical lemmas, which give us a criterion on $L_\uli$ for $\uli$ to be a reduced word.

\begin{lemma}
    \label{lemma:reduced-decomposition-dominant-weight}
    Let $\uli = (i_1, \dots, i_r)$ be a sequence in $I$.
    There exists a weight $\lambda \in P$ such that
    \[
        \langle h_{i_k}, s_{i_{k+1}}\cdots s_{i_r}\lambda\rangle > 0
    \]
    for all $1\le k \le r$ if and only if $\uli$ is a reduced word.
\end{lemma}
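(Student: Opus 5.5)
We reduce both directions to the standard fact about reduced words in terms of inversions: $\uli = (i_1,\dots,i_r)$ is reduced if and only if the roots $\beta_k := s_{i_r}s_{i_{r-1}}\cdots s_{i_{k+1}}\alpha_{i_k}$ ($1\le k\le r$) are all positive. (Equivalently, $\ell(s_{i_k}s_{i_{k+1}}\cdots s_{i_r}) > \ell(s_{i_{k+1}}\cdots s_{i_r})$ for all $k$, by the exchange condition.) The key identity is
\[
    \langle h_{i_k}, s_{i_{k+1}}\cdots s_{i_r}\lambda\rangle = \langle \beta_k^\vee, \lambda\rangle,
\]
where $\beta_k^\vee = s_{i_r}\cdots s_{i_{k+1}} h_{i_k}$ is the coroot of $\beta_k$. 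This holds because the pairing is $W$-invariant and $s_i^{-1}=s_i$. We prove it directly from \eqref{equation:simple-reflection} and the dual action on $P^\vee$. Thus the condition in the lemma says exactly that $\lambda$ pairs positively with all the coroots $\beta_1^\vee,\dots,\beta_r^\vee$.

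For ``if'': when $\uli$ is reduced, every $\beta_k$ is a positive root, so $\beta_k^\vee$ is a nonnegative, nonzero combination of the $h_i$. Take $\lambda = \rho$. Then $\langle h_i,\rho\rangle = 1$ for all $i$, so $\langle \beta_k^\vee,\rho\rangle > 0$ for every $k$.

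For ``only if'': suppose $\uli$ is not reduced. Choose $k$ maximal such that $(i_k,\dots,i_r)$ is not reduced. Then $w' = s_{i_{k+1}}\cdots s_{i_r}$ is reduced as written, but $\ell(s_{i_k}w') < \ell(w')$. Hence $w'^{-1}\alpha_{i_k} = \beta_k$ is a negative root, and $\beta_k^\vee$ is a nonpositive combination of the $h_i$. A positive pairing with it is not forced to fail for an arbitrary weight, so we need more. Here we use that positivity of the pairing must hold for \emph{all} $k$ simultaneously.

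The cleanest route is a descending induction on $k$. It shows that if the condition holds for $k+1,\dots,r$ and also for $k$, then $(i_k,\dots,i_r)$ is reduced. Set $\mu = w'\lambda$. The hypothesis gives $\langle h_{i_k},\mu\rangle>0$, so $s_{i_k}\mu \prec \mu$ strictly. The main step is to show that this strict decrease forces the inequality $\ell(s_{i_k}w')>\ell(w')$, and this is the step I expect to be the obstacle. I would first try to argue via the sets $N(w) = \{\gamma>0 : w^{-1}\gamma<0\}$. The condition for all $k$ says that each $\beta_j^\vee$ ($j\ge k$) is positive on $\lambda$. From this we show inductively that the $\beta_j$ are distinct and all have the same sign. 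Since the condition for $j=r$ makes $\beta_r = \alpha_{i_r}$ positive, all $\beta_j$ are positive. The sign-consistency argument uses that $\beta_k$ negative would give $\beta_k = -\beta_j$ for some $j>k$ (the standard cancellation in a non-reduced word). That would make $\langle\beta_k^\vee,\lambda\rangle = -\langle\beta_j^\vee,\lambda\rangle<0$, a contradiction. If the cancellation fact is awkward to cite, I would instead prove it via the exchange condition applied to $s_{i_k}w'$.
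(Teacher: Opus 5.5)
Your proof is correct. Your ``if'' direction is the paper's: any dominant regular weight, for instance $\rho$, pairs positively with the positive coroots $\beta_k^\vee$.

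For the converse your route differs. The paper writes $\lambda = w\lambda^+$ with $\lambda^+$ dominant. It notes that $\langle (s_{i_{k+1}}\cdots s_{i_r}w)^{-1}h_{i_k}, \lambda^+\rangle > 0$ forces that coroot to be positive. Hence $\uli$ followed by a reduced word for $w$ is reduced, and so is $\uli$.

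You instead take $k$ maximal with $(i_k,\dots,i_r)$ non-reduced. The inversion set of the reduced suffix $w'$ then gives $-\beta_k = \beta_j$ for some $j>k$, so the positivity conditions at $k$ and at $j$ contradict each other.

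The paper's argument is shorter because it reuses the same length criterion as the first direction. However, it needs $\lambda$ to be $W$-conjugate to a dominant weight, which is automatic only in finite type (or for $\lambda$ in the Tits cone). Yours uses only the inversion-set description of a reduced word and the well-definedness of the coroot of a real root, so that $\beta_k=-\beta_j$ gives $\beta_k^\vee=-\beta_j^\vee$. It therefore proves the lemma for arbitrary symmetrizable Kac--Moody $\frakg$, which is the generality in which the lemma is stated.

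When you write it up, drop the detour through $\mu = w'\lambda$. The ``main step'' as you phrase it there, that $\langle h_{i_k},\mu\rangle>0$ alone forces $\ell(s_{i_k}w')>\ell(w')$, is false for non-dominant $\mu$. No descending induction is needed once $k$ is chosen maximal. Also, $\beta_r=\alpha_{i_r}$ is positive by definition, not because of the condition at $j=r$.
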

\begin{proof}
    For an element $w\in W$ it is a well-known fact that $\ell(s_iw) > \ell(w)$ if and only if $w^{-1}\alpha_i$ is a positive root.
    Thus, if $s_\uli$ is a reduced decomposition, we can choose any dominant regular weight $\lambda$.
    Indeed, $\ell(s_{i_k}\cdots s_{i_r}) > \ell(s_{i_{k+1}}\cdots s_{i_r})$ is equivalent to $\langle s_{i_r}\cdots s_{i_{k+1}} h_{i_k}, \lambda\rangle > 0$ (since $\lambda$ is dominant and regular), that is $\langle h_{i_k}, s_{i_{k+1}}\cdots s_{i_r}\lambda\rangle > 0$.

    Conversely, assume that there exists a weight $\lambda \in P$ such that $\langle h_{i_k}, s_{i_{k+1}}\cdots s_{i_r}\lambda\rangle > 0$ for all $1\le k \le r$.
    Then, $\lambda$ is conjugate to a dominant weight $\lambda^+$ by some $w\in W$.
    Fix a reduced decomposition of $w = s_{j_1} \cdots s_{j_q}$.
    Using the same argument as above we see that $s_\uli (s_{j_1} \cdots s_{j_q})$ is a reduced decomposition of $s_\uli w$.
    In particular, $\uli$ is a reduced word.
\end{proof}

\begin{lemma}
    \label{lemma:lusztig-cone-reduced-decomposition}
    Assume that $\frakg$ is of finite type.
	Let $\uli = (i_1, \dots , i_r)$ be a sequence in $I$.
	Assume that there exists $(a_1, \dots, a_r) \in L_\uli$ such that $a_k > 0$ for all $k \in [1, r]$ and such that for every pair $p < q$ with $i_p = i_q =: i$ we have
	\[
	    a_p + a_q + \sum_{p < k < q} \langle h_i, \alpha_{i_k}\rangle a_k = 0.
	\]
	Then, $\uli$ is a reduced word.
\end{lemma}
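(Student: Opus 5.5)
The plan is to apply Lemma \ref{lemma:reduced-decomposition-dominant-weight}: we construct a weight $\lambda \in P$ such that, with $\mu_k := s_{i_{k+1}}\cdots s_{i_r}\lambda$ (so $\mu_r = \lambda$), we get exactly
\[
    \langle h_{i_k}, \mu_k\rangle = a_k > 0 \quad \text{for all } k \in [1, r].
\]
Since $a_k > 0$ by assumption, the lemma then gives that $\uli$ is reduced. The finite type hypothesis enters only through that lemma, whose converse direction uses that every weight is $W$-conjugate to a dominant one.

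\emph{Reformulating the target identities.} Suppose the identities $\langle h_{i_l}, \mu_l\rangle = a_l$ hold for all $l > k$. Then by (\ref{equation:simple-reflection}) we have $\mu_{l-1} = s_{i_l}\mu_l = \mu_l - a_l\alpha_{i_l}$ for these $l$, and hence
\[
    \mu_k = \lambda - \sum_{k<l\le r} a_l\alpha_{i_l}.
\]
So the identity at position $k$ becomes
\[
    a_k = \langle h_{i_k}, \lambda\rangle - \sum_{k<l\le r}\langle h_{i_k}, \alpha_{i_l}\rangle a_l. \tag{$\ast_k$}
\]
We will verify $(\ast_k)$ by descending induction on $k$.

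\emph{Definition of $\lambda$.} For each $i \in I$ occurring in $\uli$, let $q$ be the last position with $i_q = i$, and set
\[
    \langle h_i, \lambda\rangle := a_q + \sum_{q<l\le r}\langle h_i, \alpha_{i_l}\rangle a_l.
\]
For $i$ not occurring in $\uli$, put $\langle h_i, \lambda\rangle := 0$. Such a $\lambda \in P$ exists because the root datum is simply connected: take $\lambda = \sum_i \langle h_i,\lambda\rangle \varpi_i$. With this choice, $(\ast_q)$ holds by definition at every last occurrence $q$.

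\emph{Inductive step.} Let $p<q$ be consecutive occurrences of the same index $i$, with $(\ast_q)$ already known. Split the sum in $(\ast_p)$ into the part $p<l<q$, the term $l=q$, and the part $l>q$. The term $l = q$ contributes $\langle h_i,\alpha_i\rangle a_q = 2a_q$, and the part $l>q$ together with $\langle h_i,\lambda\rangle$ equals $a_q$ by $(\ast_q)$. Hence the right-hand side of $(\ast_p)$ is
\[
    a_q - 2a_q - \sum_{p<k<q}\langle h_i,\alpha_{i_k}\rangle a_k = -a_q - \sum_{p<k<q}\langle h_i,\alpha_{i_k}\rangle a_k.
\]
This equals $a_p$ exactly by the assumed equality case of the tight monomial cone condition (only consecutive pairs are needed). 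This completes the induction, so all $(\ast_k)$ hold.

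The main care point is the bookkeeping in this inductive step. The identities must be verified in descending order of position, so that the formula $\mu_k = \lambda - \sum_{l>k} a_l\alpha_{i_l}$ is legitimate at each stage. Once that is done, positivity of the $a_k$ feeds directly into Lemma \ref{lemma:reduced-decomposition-dominant-weight}.
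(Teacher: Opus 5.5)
Your proof is correct and takes the same approach as the paper. The paper defines the same $\lambda$, with $\langle h_i,\lambda\rangle = a_p + \sum_{p<k\le r}\langle h_i,\alpha_{i_k}\rangle a_k$ for $p$ the last occurrence of $i$, asserts $\langle h_{i_k}, s_{i_{k+1}}\cdots s_{i_r}\lambda\rangle = a_k$, and applies Lemma \ref{lemma:reduced-decomposition-dominant-weight}; your descending induction on consecutive occurrences supplies the verification of that identity, which the paper leaves to the reader.
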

\begin{proof}
    For $i\in I$, put $\lambda_i := 0$ if $i_k \neq i$ for all $k \in [1, r]$, otherwise put $\lambda_i := a_p + \sum_{p<k\le r} \langle h_{i_p}, \alpha_{i_k}\rangle a_k$, where $p$ is chosen maximally with $i_p = i$.
    Then, we have
    \[
        \langle h_{i_k}, s_{i_{k+1}}\cdots s_{i_r}\lambda\rangle = a_k > 0 \quad \text{for all } k \in [1, r].
    \]
    Now, the claim follows from Lemma \ref{lemma:reduced-decomposition-dominant-weight}.
\end{proof}

\subsection{A family of monomials in the canonical basis}
\label{section:monomials-in-canonical-basis}

Let $\calM$ the subset of $U_q^-(\frakg)$ consisting of the monomials $f_{i_1}^{(a_1)}\cdots f_{i_n}^{(a_n)}$ satisfying the following properties:
\begin{enumerate}
    \item $\uli := (i_1, \dots, i_n)$ is a reduced word,
    \item $(a_1, \dots, a_n) \in L_\uli$,
    \item every $i \in I$ occurs in $\uli$ at most twice, and
    \item if $i_1 = i_r$ for some $r > 1$, then
    \[
        f_{i_2}^{(a_2)}\cdots f_{i_r}^{(a_r-a_1)}\cdots f_{i_n}^{(a_n)} \in \calM.
    \]
\end{enumerate}
Let $\nu \in Q_-$, we define $\calM_\nu := \calM \cap U_q^-(\frakg)_\nu$.

\begin{proposition}
    \label{proposition:family-of-monomials-in-canonical-basis}
    Every element of $\calM$ belongs to the canonical basis.
\end{proposition}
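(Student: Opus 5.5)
By Proposition \ref{proposition:lusztig-cone-canonical-basis}, it suffices to show $(u,u) \in 1 + qA$ for every $u = f_\uli^{(\ula)} \in \calM$; conditions (1) and (2) of the definition of $\calM$ provide the remaining hypotheses. I would argue by induction on the length $n$ of $\uli$, the case $n = 0$ being $(1,1) = 1$. Write $i := i_1$, $u = f_i^{(a_1)} u'$ with $u' = f_{i_2}^{(a_2)} \cdots f_{i_n}^{(a_n)}$. The computation of $(u,u)$ is organised through the adjunction $(f_i u, v) = (u, e_i' v)$ and the standard formula $(f_i^{(a)} x, y) = \dots$ expressing the pairing in terms of $e_i'^{a}$ up to the normalising factor $\prod_{j=1}^a (1-q_i^{2j})^{-1} \in 1 + qA$.

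\textbf{Case 1: $i$ occurs only once in $\uli$.} Then $i_k \neq i$ for $k \ge 2$, so $e_i'$ commutes past each $f_{i_k}^{(a_k)}$ up to a power of $q$ by Lemma \ref{lemma:q-boson-commutation-formulas} and kills $1$. Hence $e_i' u' = 0$, i.e.\ $u' \in \ker e_i'$. From $U_q^-(\frakg) = \bigoplus f_i^{(m)} \ker e_i'$ and Kashiwara's formula $(f_i^{(a)} x, f_i^{(a)} y) = (f_i^{(a)}\cdot 1, f_i^{(a)} \cdot 1)(x, y)$ for $x, y \in \ker e_i'$, we get $(u,u) \in (1 + qA)(u', u')$. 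The word $(i_2, \dots, i_n)$ is a reduced subword, the cone conditions are inherited, and $u' \in \calM$ (condition (4) for $u'$ is exactly condition (4) for $u$ restricted, because the first letters of the tails are unchanged). Induction then finishes this case.

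\textbf{Case 2: $i = i_r$ for a unique $r > 1$.} I would compute $e_i'^{a_1} u$ using Lemma \ref{lemma:q-boson-commutation-formulas}. Moving $e_i'^{a_1}$ rightwards through $f_{i_2}^{(a_2)}\cdots f_{i_{r-1}}^{(a_{r-1})}$ only produces the factor $q_i^{-a_1\sum_{1<k<r}\langle h_i,\alpha_{i_k}\rangle a_k}$. At $f_i^{(a_r)}$ it splits into terms $f_i^{(a_r-k)} e_i'^{a_1-k}$. Past $f_i^{(a_r)}$, $e_i'$ annihilates the tail, so only $k = a_1$ survives. The result is
\[
e_i'^{a_1} u = q^{N}\, u'', \qquad u'' := f_{i_2}^{(a_2)}\cdots f_{i_r}^{(a_r-a_1)}\cdots f_{i_n}^{(a_n)},
\]
where $u'' \in \calM$ by condition (4) and $N$ is an explicit exponent. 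The tight cone condition $a_1 + a_r + \sum_{1<k<r}\langle h_i,\alpha_{i_k}\rangle a_k \le 0$ should exactly force $N \ge 0$, with $N$ combining with $-2a_1a_r + a_1(a_1+a_r) - \ldots$ from the lemma.

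The key step, and what I expect to be the main obstacle, is that $(u,u)$ is not simply $(u'', \cdot)$. We also need the decomposition of $u$ with respect to $e_i'$: all of $e_i'^{m} u$ for $m \le a_1 + a_r$ matter, since $\varepsilon_i(u)$ may exceed $a_1$. I would handle this by writing $(u,u) = (u', e_i'^{(a_1)} u)$ (with $e_i'^{(a_1)} := e_i'^{a_1}/[a_1]_i!$ suitably normalised via $(f_i^{(a)}x,y) = (x, \text{(divided) } e_i'^{a}y)$) and computing $e_i'^{a_1}u$ exactly as above. This gives
\[
(u,u) = c\,q^{N}(u', u''),
\]
where $c$ is the $q$-binomial normalisation. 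Then $u' = f_{i_2}^{(a_2)}\cdots f_{i_r}^{(a_r)}\cdots$ and $u''$ differ only in the exponent at position $r$. I would evaluate $(u', u'')$ by applying the same procedure to the leading letters of the shorter word. This suggests strengthening the induction hypothesis to control pairings of pairs of monomials in $\calM$ with the same word but exponents differing at one position: such a pairing lies in $qA$ unless the exponents agree, in which case it lies in $1+qA$. Proving this stronger statement is where the bookkeeping of $q$-exponents is done. The tight cone inequality, applied to every repeated pair (each letter appears at most twice, by condition (3)), should make every cross term carry a strictly positive power of $q$. Only the diagonal term, which is $(u'',u'')$ up to an element of $1+qA$, survives modulo $q$. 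Then $(u,u) \in 1+qA$, and Proposition \ref{proposition:lusztig-cone-canonical-basis} gives the claim.
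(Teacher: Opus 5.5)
Your Case 1 matches the paper's argument. Case 2, however, contains an error that invalidates the rest of the plan.

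\textbf{The error.} The computation you carry out (moving $e_i'^{a_1}$ through $f_{i_2}^{(a_2)}\cdots f_{i_{r-1}}^{(a_{r-1})}$, then hitting $f_i^{(a_r)}$) computes $e_i'^{a_1}u'$, not $e_i'^{a_1}u$: you have dropped the leading factor $f_i^{(a_1)}$ of $u$. The identity $e_i'^{a_1}u' = q^N u''$ is correct, but $(u,u) = c\,q^N(u',u'')$ is not. In fact $u'$ and $u''$ have weights differing by $a_1\alpha_i$, so $(u',u'')=0$ for $a_1>0$, and your formula would give $(u,u)=0$.

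This has two further consequences. First, your proposed strengthened induction, on pairings of monomials with different exponents at one position, is vacuous, since those pairings vanish for weight reasons. Second, your claim that $(u'',u'')$ is the term surviving modulo $q$ is false. In the correct expansion the $u''$-term carries the factor $q_i^{-2a_1(a_r+\gamma)}\in qA$, where $\gamma=\sum_{1<k<r}\langle h_i,\alpha_{i_k}\rangle a_k$.

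\textbf{The missing step.} You need to apply Lemma~\ref{lemma:q-boson-commutation-formulas} at the \emph{first} letter. Write $(u,u)=[a_1]_i!^{-1}(u',e_i'^{a_1}f_i^{(a_1)}u')$ and expand
$e_i'^{a_1}f_i^{(a_1)}=\sum_{k=0}^{a_1}q_i^{-2a_1^2+2ka_1-k(k-1)/2}\qbinom{a_1}{k}_i f_i^{(a_1-k)}e_i'^{a_1-k}$.
By symmetry of the form, each summand becomes a \emph{diagonal} pairing $[m]_i!^{-1}(e_i'^mu',e_i'^mu')$ with $m=a_1-k$. Your own computation gives $e_i'^mu'=q_i^{-m\gamma-ma_r+m(m+1)/2}\,u_m$, where $u_m:=f_{i_2}^{(a_2)}\cdots f_{i_r}^{(a_r-m)}\cdots f_{i_n}^{(a_n)}$. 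Collecting powers of $q$,
\[
(u,u)=\sum_{m=0}^{a_1}c_m\,q_i^{2m^2-2m(a_1+a_r+\gamma)}\,(u_m,u_m),\qquad c_m\in 1+qA .
\]
The tight cone condition $a_1+a_r+\gamma\le 0$ makes the exponent $0$ for $m=0$ and at least $2m^2>0$ for $m>0$. Hence $(u,u)\equiv(u',u')\bmod qA$, provided $(u_m,u_m)\in A$ for all $0\le m\le a_1$.

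The paper obtains this from the induction hypothesis applied to the $u_m$, which interpolate between $u'=u_0$ and the monomial $u''=u_{a_1}$ of condition (4). This is the paper's proof, and it needs no control of off-diagonal pairings.
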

\begin{proof}
    Let $\nu \in Q_-$ and $u \in \calM_\nu$. We prove the proposition by induction on $\trace(\nu)$.
    If $\trace(\nu) = 0$, clearly $u = 1$ and the statement is true.
    Now, let $\trace(\nu) < 0$ and assume that the claim is true for all $u' \in \calM_{\nu'}$ with $\trace(\nu') > \trace(\nu)$.
    Since $\uli$ is a reduced word, by Proposition \ref{proposition:lusztig-cone-canonical-basis} it is sufficient to check that $(u, u) \in 1+qA$.
    For this write $u = f_{i_1}^{(a_1)}\cdots f_{i_n}^{(a_n)}$ as in the definition of $\calM$.

    By property (iii) $i_1$ occurs at most twice in $\uli$. First, assume that $i_1$ only occurs once in $\uli$. Then,
    \begin{align*}
        & (f_{i_1}^{(a_1)}\cdots f_{i_n}^{(a_n)}, f_{i_1}^{(a_1)}\cdots f_{i_n}^{(a_n)}) \\
        ={}& [a_1]_{i_1}!^{-1} (f_{i_2}^{(a_2)}\cdots f_{i_n}^{(a_n)}, e_{i_1}'^{a_1} f_{i_1}^{(a_1)}\cdots f_{i_n}^{(a_n)}) \\
        ={}& [a_1]_{i_1}!^{-1} q_{i_1}^{-2a_1^2 + 2a_1^2 - a_1(a_1-1)/2} (f_{i_2}^{(a_2)}\cdots f_{i_n}^{(a_n)}, f_{i_2}^{(a_2)}\cdots f_{i_n}^{(a_n)})
    \end{align*}
    Observe that $u' := f_{i_2}^{(a_2)}\cdots f_{i_n}^{(a_n)} \in \calM$ and $\trace(\weight u') > \trace(\weight u)$.
    Using the induction hypothesis, we have $(u', u') \in 1 + qA$ and since $[a_1]_{i_1}!^{-1} q_{i_1}^{-a_1(a_1-1)/2} \in 1 + qA$, the induction proceeds in this case.

    Now, consider the case where $i_1$ occurs twice in $\uli$ and let $r$ denote the second index with $i_r = i_1$. Put $\gamma = \sum_{t=2}^{r-1} \langle h_{i_1}, \alpha_{i_t}\rangle a_t$. We compute
	\begin{align*}
		& (f_{i_1}^{(a_1)}\cdots f_{i_n}^{(a_n)}, f_{i_1}^{(a_1)}\cdots f_{i_n}^{(a_n)}) \\
		={}& [a_1]_i!^{-1} (f_{i_2}^{(a_2)}\cdots f_{i_n}^{(a_n)}, e_{i_1}'^{a_1}f_{i_1}^{(a_1)}\cdots f_{i_n}^{(a_n)}) \\
		={}& \sum_{k=0}^{a_1} [a_1]_{i_1}!^{-1} q_{i_1}^{-2a_1^2+2ka_1-k(k-1)/2} \qbinom{a_1}{k}_{i_1} \\
        &\times (f_{i_2}^{(a_2)}\cdots f_{i_n}^{(a_n)}, f_{i_1}^{(a_1-k)} e_{i_1}'^{a_1-k} f_{i_2}^{(a_2)} \cdots f_{i_n}^{(a_n)}) \\
		={}& \sum_{k=0}^{a_1} [a_1]_{i_1}!^{-1} [a_1-k]_{i_1}!^{-1} q_{i_1}^{-2a_1^2+2ka_1-k(k-1)/2} \qbinom{a_1}{k}_{i_1} \\
        &\times (e_{i_1}'^{a_1-k} f_{i_2}^{(a_2)}\cdots f_{i_n}^{(a_n)}, e_{i_1}'^{a_1-k} f_{i_2}^{(a_2)} \cdots f_{i_n}^{(a_n)}) \\
		={}& \sum_{k=0}^{a_1} [a_1]_{i_1}!^{-1} [a_1-k]_{i_1}!^{-1} q_{i_1}^{-2a_1^2+2ka_1-k(k-1)/2} \qbinom{a_1}{k}_{i_1} \\
		&\times q_{i_1}^{-2(a_1-k)\gamma} q_{i_1}^{-4(a_1-k)a_r + 2(a_1-k)(a_1-k+a_r)-(a_1-k)(a_1-k-1)} \\
		&\times (f_{i_2}^{(a_2)}\cdots f_{i_r}^{(a_r-a_1+k)} \cdots f_{i_n}^{(a_n)}, f_{i_2}^{(a_2)} \cdots f_{i_r}^{(a_r-a_1+k)} \cdots f_{i_n}^{(a_n)}).
	\end{align*}
	We have
    \[
        q_{i_1}^{-k(k-1)/2 - (a_1-k)(a_1-k-1)} [a_1]_{i_1}!^{-1} [a_1-k]_{i_1}!^{-1} \qbinom{a_1}{k}_{i_1} \in 1 + qA
    \]
    and by property (iv) we know that $u_k' := f_{i_2}^{(a_2)}\cdots f_{i_r}^{(a_r-a_1+k)}\cdots f_{i_n}^{(a_n)} \in \calM$ for any $0 \le k \le a_1$ and $\trace(\weight u_k') > \trace(\weight u)$.
    Thus, the induction hypothesis can be applied to $u_k'$ and one obtains $(u_k', u_k') \in 1+qA$.
	The remaining powers of $q_{i_1}$ can be written as
	\[
		\eta = q_{i_1}^{2(a_1-k)^2 - 2(a_1-k)(a_1+a_r-\gamma)}.
	\]
    From property (ii), we obtain $a_1+a_r-\gamma \le 0$; it follows that $\eta = 1$ for $k = a_1$ and $\eta \in qA$ for $k < a_1$. Thus, only the summand for $k = a_1$ is in $1 + qA$ and the remaining ones are in $qA$. Hence, $(u, u) \in 1 + qA$ and the induction proceeds.
\end{proof}

\begin{example}
    \label{example:monomials-in-canonical-basis}
    Let $\{ i_1, \dots, i_p\}$ and $\{ j_1, \dots, j_q\}$ be disjoint subsets of $I$.
    Let $\uli$ be a shuffle of $(i_1, \dots, i_p, i_p, \dots, i_1)$ and $(j_1, \dots, j_q)$, and let $\ula = (a_1, \dots, a_{2p+q}) \in L_\uli$.
    Then $f_\uli^{(\ula)}$ is an element of $\calM$ and therefore in the canonical basis.

    Note that $\uli$ is not necessarily reduced, however the subsequence consisting of the $i_k$ with $a_k \neq 0$ is.
    This follows from Lemma \ref{lemma:lusztig-cone-reduced-decomposition} after modifying $\ula$ appropriately.
\end{example}

\section{Dual imaginary vectors}
\label{section:dual-imaginary-vectors}

An element of the dual canonical basis $b\in \calB(\infty)^\vee$ is called \emph{real} if $b^2\in q^\bbZ\calB(\infty)^\vee$, otherwise \emph{imaginary}.
Leclerc gave an example of an imaginary vector for each of the types $A_5$, $B_3$, $C_3$, $D_4$, and $G_2$ in Section 2 of \cite{Leclerc:ImaginaryVectors-DualCanonicalBasis}.
Assume that $\frakg$ is one of these types and that $b\in \calB(\infty)^\vee$ is the corresponding example.
Let $\theta \in \calB(\infty)$ be the unique element with $(b, \theta) = 1$.
We claim that it is a monomial and has the following shape:
\begin{equation}
    \label{equation:notation-theta}
    \theta = (f_{i_1}^{(a_1)}\cdots f_{i_p}^{(a_p)}) (f_{j_1}^{(b_1)}\cdots f_{j_q}^{(b_q)}) (f_{i_1}^{(a_1)}\cdots f_{i_p}^{(a_p)}).
\end{equation}
Indeed, the element $\theta$ can be obtained from $b$ in the following way:
Using the fact that $B(\infty)$ is an orthonormal basis with respect to the bilinear form on $L(\infty)/qL(\infty)$ (which is induced by that of $U_q^-(\frakg)$), one sees that it is sufficient to know the Lusztig datum (PBW datum) of $b$ to find $\theta$.
This information was already given by Leclerc in the same paper \cite{Leclerc:ImaginaryVectors-DualCanonicalBasis}.
From here, one uses the crystal structure on Lusztig data to compute the adapted string of $b$ with respect to the same reduced decomposition as the one used by Leclerc.
Finally, one can use Proposition \ref{proposition:family-of-monomials-in-canonical-basis} to see that the monomial corresponding to the adpated string is indeed in the canonical basis.

\begin{remark}
    For the crystal structure on Lusztig data we refer the reader to \cite{BerensteinZelevinsky:TensorProductMultiplicities-CanonicalBases-TotallyPositiveVarieties}.
\end{remark}

Similarly to the concept of real and imaginary vectors, we consider the ``square'' of $\theta$:
\begin{equation}
    \label{equation:notation-xi}
    \begin{aligned}
        \xi ={}& (f_{i_1}^{(a_1)}\cdots f_{i_p}^{(a_p)}) (f_{j_1}^{(b_1)}\cdots f_{j_q}^{(b_q)}) (f_{i_1}^{(2a_1)}\cdots f_{i_p}^{(2a_p)}) \\
        &\cdot (f_{j_1}^{(b_1)}\cdots f_{j_q}^{(b_q)}) (f_{i_1}^{(a_1)}\cdots f_{i_p}^{(a_p)}).
    \end{aligned}
\end{equation}
A close observation of $\theta$ and $\xi$ reveals the following properties:
\begin{enumerate}[label=(P\arabic*)]
    \item The sets $\{i_1 \dots, i_p\}$ and $\{j_1, \dots, j_q\}$ form a partition of $I$,
    \item $(\alpha_{i_r}, \alpha_{i_s}) = 0$ for all $1\le r < s \le p$ and $(\alpha_{j_r}, \alpha_{j_s}) = 0$ for all $1\le r < s \le q$,
    \item $3a_k + \sum_{t=1}^q \langle h_{i_k}, \alpha_{j_t}\rangle b_{j_t} = 0$ for $1 \le k \le p$, and
    \item $b_k + \sum_{t=1}^p \langle h_{j_k}, \alpha_{i_t}\rangle a_{i_t} = 0$ for $1\le k \le q$.
\end{enumerate}

\begin{remark}
    \begin{enumerate}
        \item Property (P2) implies that $\xi$ is indeed a square of $\theta$ up to $q$-binomials.
        \item Properties (P1) and (P2) imply that
            \[
                |\{i_1 \dots, i_p\}| = p \quad \text{and} \quad |\{j_1 \dots, j_q\}| = q.
            \]
            In particular, each $i\in I$ occurs exactly once in either $i_1, \dots, i_p$ or $j_1, \dots, j_q$.
        \item Properties (P3) and (P4) are the tight condition in (\ref{equation:lusztig-cone-condition}) for $\xi$.
    \end{enumerate}
\end{remark}

\subsection{Classification and properties}
This leads to the following classification problem:
Determine all finite-dimen\-sional simple complex Lie algebras $\frakg$, together with pairs of monomials $(\theta, \xi)$, as above, such that properties (P1) - (P4) hold.

\begin{theorem}
    \label{theorem:classification}
    Every triple $(\frakg, \theta, \xi)$ fulfilling the properties (P1) - (P4) belongs to one of the following 10 families:
    \begin{enumerate}
    \item Type $A_5$ \\[\abovedisplayskip]
    \(
        \begin{aligned}
            \theta_1^{[l]} &= (f_2^{(l)}f_4^{(l)}) (f_1^{(l)}f_3^{(2l)}f_5^{(l)}) (f_2^{(l)}f_4^{(l)}) \\
            \xi_1^{[l]} &= (f_2^{(l)}f_4^{(l)}) (f_1^{(l)}f_3^{(2l)}f_5^{(l)}) (f_2^{(2l)}f_4^{(2l)}) (f_1^{(l)}f_3^{(2l)}f_5^{(l)}) (f_2^{(l)}f_4^{(l)})
        \end{aligned}
    \)
    \vspace\belowdisplayskip
    \item Type $A_5$ \\[\abovedisplayskip]
    \(
        \begin{aligned}
            \theta_2^{[l]} &= (f_1^{(l)}f_3^{(2l)}f_5^{(l)}) (f_2^{(3l)}f_4^{(3l)}) (f_1^{(l)}f_3^{(2l)}f_5^{(l)}) \\
            \xi_2^{[l]} &= (f_1^{(l)}f_3^{(2l)}f_5^{(l)}) (f_2^{(3l)}f_4^{(3l)}) (f_1^{(2l)}f_3^{(4l)}f_5^{(2l)}) (f_2^{(3l)}f_4^{(3l)}) (f_1^{(l)}f_3^{(2l)}f_5^{(l)})
        \end{aligned}
    \)
    \vspace\belowdisplayskip
    \item Type $B_3$ \\[\abovedisplayskip]
    \(
        \begin{aligned}
            \theta_3^{[l]} &= f_2^{(l)} (f_1^{(l)}f_3^{(2l)}) f_2^{(l)} \\
            \xi_3^{[l]} &= f_2^{(l)} (f_1^{(l)}f_3^{(2l)}) f_2^{(2l)} (f_1^{(l)} f_3^{(2l)}) f_2^{(l)}
        \end{aligned}
    \)
    \vspace\belowdisplayskip
    \item Type $B_3$ \\[\abovedisplayskip]
    \(
        \begin{aligned}
            \theta_4^{[l]} &= (f_1^{(l)}f_3^{(2l)}) f_2^{(3l)} (f_1^{(l)}f_3^{(2l)}) \\
            \xi_4^{[l]} &= (f_1^{(l)}f_3^{(2l)}) f_2^{(3l)} (f_1^{(2l)}f_3^{(4l)}) f_2^{(3l)} (f_1^{(l)}f_3^{(2l)})
        \end{aligned}
    \)
    \vspace\belowdisplayskip
    \item Type $C_3$ \\[\abovedisplayskip]
    \(
        \begin{aligned}
            \theta_5^{[l]} &= f_2^{(l)} (f_1^{(l)}f_3^{(l)}) f_2^{(l)} \\
            \xi_5^{[l]} &= f_2^{(l)} (f_1^{(l)}f_3^{(l)}) f_2^{(2l)} (f_1^{(l)}f_3^{(l)}) f_2^{(l)}
        \end{aligned}
    \)
    \vspace\belowdisplayskip
    \item Type $C_3$ \\[\abovedisplayskip]
    \(
        \begin{aligned}
            \theta_6^{[l]} &= (f_1^{(l)}f_3^{(l)}) f_2^{(3l)} (f_1^{(l)}f_3^{(l)}) \\
            \xi_6^{[l]} &= (f_1^{(l)}f_3^{(l)}) f_2^{(3l)} (f_1^{(2l)}f_3^{(2l)}) f_2^{(3l)} (f_1^{(l)}f_3^{(l)})
        \end{aligned}
    \)
    \vspace\belowdisplayskip
    \item Type $D_4$ \\[\abovedisplayskip]
    \(
        \begin{aligned}
            \theta_7^{[l]} &= f_2^{(l)} (f_1^{(l)}f_3^{(l)}f_4^{(l)}) f_2^{(l)} \\
            \xi_7^{[l]} &= f_2^{(l)} (f_1^{(l)}f_3^{(l)}f_4^{(l)}) f_2^{(2l)} (f_1^{(l)}f_3^{(l)}f_4^{(l)}) f_2^{(l)}
        \end{aligned}
    \)
    \vspace\belowdisplayskip
    \item Type $D_4$ \\[\abovedisplayskip]
    \(
        \begin{aligned}
            \theta_8^{[l]} &= (f_1^{(l)}f_3^{(l)}f_4^{(l)}) f_2^{(3l)} (f_1^{(l)}f_3^{(l)}f_4^{(l)}) & \\
            \xi_8^{[l]} &= (f_1^{(l)}f_3^{(l)}f_4^{(l)}) f_2^{(3l)} (f_1^{(2l)}f_3^{(2l)}f_4^{(2l)}) f_2^{(3l)} (f_1^{(l)}f_3^{(l)}f_4^{(l)}) &
        \end{aligned}
    \)
    \vspace\belowdisplayskip
    \item Type $G_2$ \\[\abovedisplayskip]
    \(
        \begin{aligned}
            \theta_9^{[l]} &= f_1^{(l)} f_2^{(l)} f_1^{(l)} \\
            \xi_9^{[l]} &= f_1^{(l)} f_2^{(l)} f_1^{(2l)} f_2^{(l)} f_1^{(l)}
        \end{aligned}
    \)
    \vspace\belowdisplayskip
    \item Type $G_2$ \\[\abovedisplayskip]
    \(
        \begin{aligned}
            \theta_{10}^{[l]} &= f_2^{(l)} f_1^{(3l)} f_2^{(l)} \\
            \xi_{10}^{[l]} &= f_2^{(l)} f_1^{(3l)} f_2^{(2l)} f_1^{(3l)} f_2^{(l)}
        \end{aligned}
    \)
    \end{enumerate}
\end{theorem}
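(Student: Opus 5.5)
The plan is to turn (P1)–(P4) into an eigenvalue problem for the Cartan matrix and then use Perron–Frobenius together with the known spectrum of finite-type Cartan matrices. Throughout I assume, as in Leclerc's examples, that $\frakg$ is simple (so the Dynkin diagram is connected) and that all exponents $a_k, b_t$ are strictly positive integers. With zero exponents (P1) would be vacuous and the monomials would degenerate.

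\emph{Step 1: the bipartition.} Put $I' = \{i_1,\dots,i_p\}$ and $J = \{j_1,\dots,j_q\}$. By (P1) and (P2) these are complementary subsets of $I$ with no Dynkin edges inside either one. Since finite-type Dynkin diagrams are connected trees, they have exactly two proper $2$-colourings. Hence $\{I', J\}$ is \emph{the} bipartition of the diagram, and the only freedom is which colour class plays the role of $I'$. This explains why each type appears exactly twice in the list.

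By (P2) the factors $f_{i_r}^{(\cdot)}$ pairwise commute, and likewise the $f_{j_t}^{(\cdot)}$. So the internal order of each bracket in (\ref{equation:notation-theta}) and (\ref{equation:notation-xi}) is irrelevant. Consequently $\theta$ and $\xi$ are determined by $\frakg$, the choice of colour class, and the vectors $a = (a_k)$ and $b = (b_t)$.

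\emph{Step 2: reduce to an eigenvalue equation.} Write the Cartan matrix in block form with respect to $I' \sqcup J$ as $C = 2\,\id - N$, where
\[
    N = \begin{pmatrix} 0 & X \\ Y & 0 \end{pmatrix},\qquad X_{kt} = -\langle h_{i_k}, \alpha_{j_t}\rangle \ge 0,\quad Y_{tk} = -\langle h_{j_t}, \alpha_{i_k}\rangle \ge 0 .
\]
Then (P4) reads $b = Ya$ and (P3) reads $3a = Xb$, so $XYa = 3a$. Put $v = (\sqrt3\, a, b)$. A direct check gives $Nv = (Xb, \sqrt3\,Ya) = (3a, \sqrt3\, b) = \sqrt3\, v$.

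The matrix $N$ is nonnegative and irreducible, because the diagram is connected. By Perron–Frobenius, a strictly positive eigenvector exists only for the spectral radius. So the conditions force $\rho(N) = \sqrt3$, and the eigenvector is then unique up to a positive scalar.

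\emph{Step 3: identify the types.} It is a classical fact that the eigenvalues of $N = 2\,\id - C$ are $2\cos(\pi m_j/h)$, where the $m_j$ are the exponents and $h$ is the Coxeter number (note $N$ is similar to a symmetric matrix via the symmetrization). The largest eigenvalue is $2\cos(\pi/h)$, so $\rho(N) = \sqrt3$ is equivalent to $h = 6$. Going through the list of Coxeter numbers, $h = 6$ holds exactly for $A_5$, $B_3$, $C_3$, $D_4$ and $G_2$.

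If I want to avoid quoting this fact, I can instead compare Perron eigenvalues directly. The Perron eigenvalue is monotone under passing to subdiagrams and under increasing entries. This lets me rule out $A_n$ ($n\neq5$), $B_n, C_n$ ($n\neq3$), $D_n$ ($n\ne4$), $E_n$ and $F_4$ by exhibiting subdiagrams or overdiagrams with eigenvalue strictly larger or smaller than $\sqrt3$. I expect this case analysis to be the main, if routine, obstacle. It is also where the exact normalisation of $X$ and $Y$ in non-simply-laced types (including $G_2$, with its entries $-3$) must be handled with care.

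\emph{Step 4: compute the families.} For each of the five types and each of the two choices of $I'$, I solve $XYa = 3a$ explicitly. This gives a one-dimensional positive solution space, spanned by a primitive integer vector; for instance, in $A_5$ with $I' = \{2,4\}$ it is $a = (1,1)$. Every admissible positive integer solution is then $a = l\cdot a_{\min}$ for some $l \in \bbZ_{>0}$, which requires checking that $b = Ya$ is integral; this is automatic because $Y$ is integral.

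Substituting $a$ and $b = Ya$ into (\ref{equation:notation-theta}) and (\ref{equation:notation-xi}) yields the ten families $\theta_m^{[l]}, \xi_m^{[l]}$ listed in the theorem. As a final check I verify by hand that, e.g., for $I' = \{1,3,5\}$ in $A_5$ one gets $a = l(1,2,1)$ and $b = Ya = l(3,3)$, matching family (2).
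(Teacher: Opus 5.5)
Your argument is correct. It takes a genuinely different route from the paper's.

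\textbf{The paper's route.} The paper solves the linear system (P3)--(P4) by hand. For each type and each of the two colourings, it propagates the equations from a leaf along the Dynkin diagram. Some coordinate is then forced to be $\le 0$ unless the diagram has the right length: rank $5$ in type $A$, rank $3$ in types $B$ and $C$, rank $4$ in type $D$. The cases $E_6, E_7, E_8$ and $F_4$ are reduced to the $D_4$ and $B_3$ computations on a subdiagram. The explicit solutions come out of the same computation.

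\textbf{Your route.} You encode (P3)--(P4) as $Nv=\sqrt3\,v$, with $N=2\,\id-C$ and $v=(\sqrt3\,a,b)$. Perron--Frobenius then forces the spectral radius of $N$ to equal $\sqrt3$ and the solution space to be one-dimensional. The classical value $2\cos(\pi/h)$ of that spectral radius singles out exactly the types with Coxeter number $h=6$.

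\textbf{What your route buys.} It is uniform and explanatory.
\begin{enumerate}
\item It shows why precisely $A_5, B_3, C_3, D_4, G_2$ occur.
\item It shows why the two families in each type come from the same Perron vector, with the colour classes interchanged and a rescaling by $\sqrt3$.
\item It gives uniqueness up to scaling for free, from simplicity of the Perron root.
\item A connected symmetrizable Cartan matrix is of finite type if and only if the spectral radius of $N$ is $<2$. So your method also proves the paper's subsequent remark on Kac--Moody algebras cleanly. The paper's stated justification for that remark, namely the existence of a pair with $\langle h_i,\alpha_j\rangle\langle h_j,\alpha_i\rangle>3$, fails for instance for simply-laced affine diagrams, where all these products equal $1$.
\end{enumerate}

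\textbf{What the paper's route buys.} It is completely elementary. It needs neither Perron--Frobenius nor the spectrum of Cartan matrices. It also produces the explicit vectors directly, whereas you still need the hand computation of your Step 4 to obtain them.
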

\begin{proof}
    First, observe that (P1) and (P2) together imply that there exactly two ways to label the Dynkin diagram of $\frakg$ with $\{i_1, \dots, i_p\}$ and $\{j_1, \dots, j_q\}$ and second, observe that (P3) and (P4) define a linear system over $\bbR^{p+q}$ with variables $a_1, \dots, a_p, b_1, \dots, b_q$.
    Determining a positive integral solution to linear system is thus equivalent to finding a monomial fulfilling (P1) - (P4).
    Once a labeling is chosen, it is straightforward to solve the linear system. Since we only care about positive integral solutions, we can abort our computations as soon as one coordinate is $\le 0$ and a solution will always mean positive integral solution. We now proceed with a case-by-case analysis.

    \emph{Type $A_n$}. First, consider the labeling
    \[
        \begin{tikzpicture}[baseline=(j1.base)]
            \node (j1) at (0,0) {$j_1$};
            \node (i1) at (1,0) {$i_1$};
            \node (j2) at (2,0) {$j_2$};
            \node (i2) at (3,0) {$i_2$};
            \node (j3) at (4,0) {$\dots$};
            \draw (j1) -- (i1) -- (j2) -- (i2) -- (j3);
        \end{tikzpicture}
    \]
    (P3) and (P4) give the following system of linear equations:
    \[
        \left.
        \begin{aligned}
            b_1 - a_1 &= 0 \\
            3a_1 - b_1 - b_2 &= 0 \\
            b_2 - a_1 - a_2 &= 0 \\
            3a_2 - b_2 - b_3 &= 0 \\
            b_3 - a_2 - a_3 &= 0 \\
        \end{aligned}
        \quad
        \right\rbrace
        \quad
        \begin{aligned}
            b_1 &= a_1 \\
            b_2 &= 3a_1 - b_1 = 2a_1 \\
            a_2 &= b_2 - a_1 = a_1 \\
            b_3 &= 3a_2 - b_2 = a_1 \\
            a_3 &= b_3 - a_2 = 0
        \end{aligned}
    \]
    Observe that taking $p < 2$ or $q < 3$ only admits the trivial solution for the linear system; we also cannot take $p \ge 3$, since in this case $a_3 = 0$ would not be positive, and from (P1) it follows that $q$ cannot be greater than $3$.
    Thus, the linear system only has a solution if $p = 2$, $q = 3$, $\{i_1, i_2\} = \{ 2, 4\}$ and $\{j_1,j_2,j_3\} = \{1,3,5\}$. Fixing the order $(i_1, i_2, j_1, j_2, j_3) = (2,4,1,3,5)$ gives the solutions $(l,l,l,2l,l)$ for $l\in \bbN$.

    The second possible labeling is
    \[
        \begin{tikzpicture}[baseline=(j1.base)]
            \node (n1) at (0,0) {$i_1$};
            \node (n2) at (1,0) {$j_1$};
            \node (n3) at (2,0) {$i_2$};
            \node (n4) at (3,0) {$j_2$};
            \node (n5) at (4,0) {$\dots$};
            \draw (n1) -- (n2) -- (n3) -- (n4) -- (n5);
        \end{tikzpicture}
    \]
    (P3) and (P4) give the following system of linear equations:
    \[
        \left.
        \begin{aligned}
            3a_1 - b_1 &= 0 \\
            b_1 - a_1 - a_2 &= 0 \\
            3a_2 - b_1 - b_2 &= 0 \\
            b_2 - a_2 - a_3 &= 0 \\
            3a_3 - b_2 - b_3 &= 0 \\
        \end{aligned}
        \quad
        \right\rbrace
        \quad
        \begin{aligned}
            b_1 &= 3a_1 \\
            a_2 &= b_1 - a_2 = 2a_1 \\
            b_2 &= 3a_2 - b_1 = 3a_1 \\
            a_3 &= b_2 - a_2 = a_1 \\
            b_3 &= 3a_3 - b_2 = 0
        \end{aligned}
    \]
    It only has a solution if $p = 3$, $q = 2$, $\{ i_1, i_2, i_3\} = \{ 1, 3, 5 \}$ and $\{ j_1, j_2 \} = \{ 2, 4\}$. Fixing $(i_1, i_2, i_3, j_1, j_2) = (1,3,5,2,4)$ gives the solutions $(l,2l,l,3l,3l)$ for $l\in \bbN$.

    \emph{Type $B_n$}. First, consider the labeling
    \[
        \begin{tikzpicture}
            \node (n1) at (0,0) {$\dots$};
            \node (n2) at (1,0) {$i_2$};
            \node (n3) at (2,0) {$j_2$};
            \node (n4) at (3,0) {$i_1$};
            \node (n5) at (4,0) {$j_1$};
            \draw (n1) -- (n2) -- (n3) -- (n4);
            \drawDoubleEdge{n4}{n5}
        \end{tikzpicture}
    \]
    (P3) and (P4) give the following system of linear equations:
    \[
        \left.
        \begin{aligned}
            b_1 - 2a_1 = 0 \\
            3a_1 - b_1 - b_2 = 0 \\
            b_2 - a_1 - a_2 = 0
        \end{aligned}
        \quad
        \right\rbrace
        \quad
        \begin{aligned}
            b_1 &= 2a_1 \\
            b_2 &= 3a_1 - b_1 = a_1 \\
            a_2 &= b_2 - a_1 = 0
        \end{aligned}
    \]
    It only has a solution if $p = 1$, $q = 2$, $i_1 = 2$, $\{ j_1, j_2 \} = \{ 1, 3 \}$. Fixing $(i_1, j_1, j_2) = (2, 1, 3)$ gives the solutions $(l,l,2l)$ for $l\in \bbN$.

    The second possible labeling is
    \[
        \begin{tikzpicture}
            \node (n1) at (0,0) {$\dots$};
            \node (n2) at (1,0) {$j_2$};
            \node (n3) at (2,0) {$i_2$};
            \node (n4) at (3,0) {$j_1$};
            \node (n5) at (4,0) {$i_1$};
            \draw (n1) -- (n2) -- (n3) -- (n4);
            \drawDoubleEdge{n4}{n5}
        \end{tikzpicture}
    \]
    (P3) and (P4) give the following system of linear equations:
    \[
        \left.
        \begin{aligned}
            3a_1 - 2b_1 &= 0 \\
            b_1 - a_1 - a_2 &= 0 \\
            3a_2 - b_1 - b_2 &= 0
        \end{aligned}
        \quad
        \right\rbrace
        \quad
        \begin{aligned}
            b_1 &= \tfrac32 a_1 \\
            a_2 &= b_1 - a_1 = \tfrac12 a_1 \\
            b_2 &= 3a_2 - b_1 = 0
        \end{aligned}
    \]
    It only has a solution if $p = 2$, $q = 1$ and $\{ i_1, i_2 \} = \{ 1, 3 \}$, and $j_1 = 2$. Fixing $(i_1,i_2,j_1) = (1,3,2)$ gives the solutions $(l,2l,3l)$ for $l\in \bbN$.

    \emph{Type $C_n$}. First, consider the labeling
    \[
        \begin{tikzpicture}
            \node (n1) at (0,0) {$\dots$};
            \node (n2) at (1,0) {$i_2$};
            \node (n3) at (2,0) {$j_2$};
            \node (n4) at (3,0) {$i_1$};
            \node (n5) at (4,0) {$j_1$};
            \draw (n1) -- (n2) -- (n3) -- (n4);
            \drawReverseDoubleEdge{n4}{n5}
        \end{tikzpicture}
    \]
    (P3) and (P4) give the following system of linear equations:
    \[
        \left.
        \begin{aligned}
            b_1 - a_1 &= 0 \\
            3a_1 - 2b_1 - b_2 &= 0 \\
            b_2 - a_1 - a_2 &= 0
        \end{aligned}
        \quad
        \right\rbrace
        \quad
        \begin{aligned}
            b_1 &= a_1 \\
            b_2 &= 3a_1 - 2b_1 = a_1 \\
            a_2 &= b_2 - a_1 = 0
        \end{aligned}
    \]
    It only has a solution if $p = 1$, $q = 2$, $i_1 = 2$ and $\{ j_1, j_2 \} = \{ 1, 3 \}$. Fixing $(i_1,j_1,j_2) = (2,1,3)$ gives the solutions $(l,l,l)$ for $l\in \bbN$.

    The second possible labeling is
    \[
        \begin{tikzpicture}
            \node (n1) at (0,0) {$\dots$};
            \node (n2) at (1,0) {$j_2$};
            \node (n3) at (2,0) {$i_2$};
            \node (n4) at (3,0) {$j_1$};
            \node (n5) at (4,0) {$i_1$};
            \draw (n1) -- (n2) -- (n3) -- (n4);
            \drawReverseDoubleEdge{n4}{n5}
        \end{tikzpicture}
    \]
    (P3) and (P4) give the following system of linear equations:
    \[
        \left.
        \begin{aligned}
            3a_1 - 2b_1 &= 0 \\
            b_1 - a_1 - a_2 &= 0 \\
            3a_2 - b_1 - b_2 &= 0
        \end{aligned}
        \quad
        \right\rbrace
        \quad
        \begin{aligned}
            b_1 &= \tfrac32 a_1 \\
            a_2 &= b_1 - a_1 = \tfrac12 a_1 \\
            b_2 &= 3a_2 - b_1 = 0
        \end{aligned}
    \]
    It only has a solution if $p = 2$, $q = 1$, $\{i_1, i_2\} = \{1, 3\}$ and $j_1 = 2$. Fixing $(i_1,i_2,j_1) = (1,3,2)$ gives the solutions $(l,l,3l)$ for $l\in \bbN$.

    \emph{Type $D_n, n \ge 4$}. First, consider the labeling
    \[
        \begin{tikzpicture}
            \node (n1) at (0,0) {$\dots$};
            \node (n2) at (1,0) {$j_4$};
            \node (n3) at (2,0) {$i_2$};
            \node (n4) at (3,0) {$j_3$};
            \node (n5) at (4,0) {$i_1$};
            \node (n6) at (5,1) {$j_1$};
            \node (n7) at (5,-1) {$j_2$};
            \draw (n1) -- (n2) -- (n3) -- (n4) -- (n5);
            \draw (n5) -- (n6);
            \draw (n5) -- (n7);
        \end{tikzpicture}
    \]
    (P3) and (P4) give the following system of linear equations:
    \[
        \left.
        \begin{aligned}
            b_1 - a_1 &= 0 \\
            b_2 - a_1 &= 0 \\
            3a_1 - b_1 - b_2 - b_3 &= 0 \\
            b_3 - a_1 - a_2 &= 0
        \end{aligned}
        \quad
        \right\rbrace
        \quad
        \begin{aligned}
            b_1 &= a_1 \\
            b_2 &= a_1 \\
            b_3 &= 3a_1 - b_1 - b_2 = a_1 \\
            a_2 &= a_1 - b_3 = 0
        \end{aligned}
    \]
    It only has a solution if $p = 1$, $q = 3$, $i_1 = 2$ and $\{j_1,j_2,j_3\} = \{1,3,4\}$.
    Fixing $(i_1,j_1,j_2,j_3) = (2,1,3,4)$ gives the solutions $(l,l,l,l)$ for $l\in \bbN$.

    The second possible labeling is
    \[
        \begin{tikzpicture}
            \node (n1) at (0,0) {$\dots$};
            \node (n2) at (1,0) {$i_4$};
            \node (n3) at (2,0) {$j_2$};
            \node (n4) at (3,0) {$i_3$};
            \node (n5) at (4,0) {$j_1$};
            \node (n6) at (5,1) {$i_1$};
            \node (n7) at (5,-1) {$i_2$};
            \draw (n1) -- (n2) -- (n3) -- (n4) -- (n5);
            \draw (n5) -- (n6);
            \draw (n5) -- (n7);
        \end{tikzpicture}
    \]
    (P3) and (P4) give the following system of linear equations:
    \[
        \left.
        \begin{aligned}
            3a_1 - b_1 &= 0 \\
            3a_2 - b_1 &= 0 \\
            b_1 - a_1 - a_2 - a_3 &= 0 \\
            3a_3 - b_1 - b_2 &= 0 \\
        \end{aligned}
        \quad
        \right\rbrace
        \quad
        \begin{aligned}
            b_1 &= 3a_1 \\
            a_2 &= \tfrac13 b_1 = a_1 \\
            a_3 &= b_1 - a_1 - a_2 = a_1 \\
            b_2 &= 3a_3 - b_1 = 3a_1 - 3a_1 = 0
        \end{aligned}
    \]
    It only has a solution if $p = 3$, $q = 1$, $\{i_1,i_3,i_4\} = \{1,3,4\}$ and $j_1 = 2$. Fixing $(i_1,i_2,i_3,j_1) = (1,3,4,2)$ gives the solutions $(l,l,l,3l)$ for $l\in \bbN$.

    \emph{Type $E_6, E_7, E_8$}. By looking at the Dynkin diagram we see that
    \[
        \begin{tikzpicture}
            \node (n1) at (0,0) {$j_2$};
            \node (n2) at (1,0) {$i_2$};
            \node (n3) at (2,0) {$j_1$};
            \node (n4) at (2,1) {$i_1$};
            \node (n5) at (3,0) {$i_3$};
            \node (n6) at (4,0) {$j_3$};
            \node (n7) at (5,0) {$(i_4)$};
            \node (n8) at (6,0) {$(j_4)$};
            \draw (n1) -- (n2) -- (n3) -- (n5) -- (n6) -- (n7) -- (n8);
            \draw (n3) -- (n4);
        \end{tikzpicture}
    \]
    and
    \[
        \begin{tikzpicture}
            \node (n1) at (0,0) {$i_2$};
            \node (n2) at (1,0) {$j_2$};
            \node (n3) at (2,0) {$i_1$};
            \node (n4) at (2,1) {$j_1$};
            \node (n5) at (3,0) {$j_3$};
            \node (n6) at (4,0) {$i_3$};
            \node (n7) at (5,0) {$(j_4)$};
            \node (n8) at (6,0) {$(i_4)$};
            \draw (n1) -- (n2) -- (n3) -- (n5) -- (n6) -- (n7) -- (n8);
            \draw (n3) -- (n4);
        \end{tikzpicture}
    \]
    are the possible labelings. However, starting at $i_1$ for the first and $j_1$ for the second, we see that the computations are the same as in $D_4$. Therefore (P1) - (P4) cannot be satisfied simultaneously.

    \emph{Type $F_4$}. By looking at the Dynkin diagram we see that
    \[
        \begin{tikzpicture}
            \node (n1) at (0,0) {$i_1$};
            \node (n2) at (1,0) {$j_1$};
            \node (n3) at (2,0) {$i_2$};
            \node (n4) at (3,0) {$j_2$};
            \draw (n1) -- (n2);
            \drawDoubleEdge{n2}{n3}
            \draw (n3) -- (n4);
        \end{tikzpicture}
    \]
    and
    \[
        \begin{tikzpicture}
            \node (n1) at (0,0) {$j_1$};
            \node (n2) at (1,0) {$i_1$};
            \node (n3) at (2,0) {$j_2$};
            \node (n4) at (3,0) {$i_2$};
            \draw (n1) -- (n2);
            \drawDoubleEdge{n2}{n3}
            \draw (n3) -- (n4);
        \end{tikzpicture}
    \]
    are the possible labelings.
    However starting at $i_1$ for the first and at $j_1$ for the second gives the second $B_3$ and first $B_3$ case, respectively. Therefore (P1) - (P4) cannot be satisfied simultaneously.

    \emph{Type $G_2$}. The two possible labelings are
    \[
        \begin{tikzpicture}[baseline=(n1.base)]
            \node (n1) at (0,0) {$i_1$};
            \node (n2) at (1,0) {$j_1$};
            \drawTripleEdge{n1}{n2}
        \end{tikzpicture}
        \quad\text{and}\quad
        \begin{tikzpicture}[baseline=(n1.base)]
            \node (n1) at (0,0) {$j_1$};
            \node (n2) at (1,0) {$i_1$};
            \drawTripleEdge{n1}{n2}
        \end{tikzpicture}
    \]
    In the first case fixing $(i_1, j_1) = (1,2)$ gives the solutions $(l,l)$ for $l\in \bbN$ and in the second fixing $(i_1, j_1) = (2,1)$ gives the solutions $(l, 3l)$ for $l\in \bbN$.
\end{proof}

\begin{remark}
    We just considered the case where $\frakg$ is a finite-dimensional Lie algebra. However, it is a natural question to ask if there are monomials arising from (infinite-dimensional) symmetrizable Kac-Moody Lie algebras. Fix $k \in \{1,\cdots, p\}$ and $l \in \{1, \dots, q\}$ such that $\langle h_{i_k}, \alpha_{j_l}\rangle \langle h_{j_l}, \alpha_{i_k}\rangle > 3$ (such a pair must exist by (P1) and the assumption that $\frakg$ is infinite-dimensional). However, combining properties (P3) and (P4) we obtain
    \begin{align*}
    0 = 3a_k + \sum_{t=1}^q \langle h_{i_k}, \alpha_{j_t}\rangle b_t
    &\le 3a_k + \langle h_{i_k}, \alpha_{j_l}\rangle b_l \\
    &\le 3a_k - \sum_{s=1}^p\langle h_{i_k}, \alpha_{j_l}\rangle \langle h_{j_l}, \alpha_{i_s}\rangle a_s \\
    &\le 3a_k - \langle h_{i_k}, \alpha_{j_l}\rangle \langle h_{j_l}, \alpha_{i_k}\rangle a_k.
    \end{align*}
    This is a contradiction to our choice of $k$ and $l$. In particular, the above inequality only has a positive solution if $\frakg$ is finite-dimensional.
\end{remark}

In the following fix $(\frakg, \theta, \xi)$ to be one of the ten cases in Theorem \ref{theorem:classification}.

\begin{lemma}
    \label{lemma:theta-in-canonical-basis}
    We have $\theta^{[l]} \in \calB(\infty)$ for all $l \in \bbN$.
\end{lemma}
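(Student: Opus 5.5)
The approach is to show that each $\theta^{[l]}$ lies in the family $\calM$ of Section \ref{section:monomials-in-canonical-basis}; Proposition \ref{proposition:family-of-monomials-in-canonical-basis} then gives $\theta^{[l]} \in \calB(\infty)$. In all ten cases $\theta^{[l]}$ has the shape (\ref{equation:notation-theta}), namely $(f_{i_1}^{(a_1)}\cdots f_{i_p}^{(a_p)})(f_{j_1}^{(b_1)}\cdots f_{j_q}^{(b_q)})(f_{i_1}^{(a_1)}\cdots f_{i_p}^{(a_p)})$. By (P2) the $f_{i_k}$ pairwise commute, as do the $f_{j_t}$. We may therefore reorder the last block as $f_{i_p}^{(a_p)}\cdots f_{i_1}^{(a_1)}$ without changing the element. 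After this reordering $\theta^{[l]}$ is literally a monomial of the form in Example \ref{example:monomials-in-canonical-basis}: a shuffle (in fact a concatenation) of $(i_1,\dots,i_p,i_p,\dots,i_1)$ with $(j_1,\dots,j_q)$. The index sets $\{i_k\}$ and $\{j_t\}$ are disjoint by (P1).

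So it remains to check three things: that the word $\uli=(i_1,\dots,i_p,j_1,\dots,j_q,i_p,\dots,i_1)$ is reduced, that the exponent vector lies in $L_\uli$, and that property (iv) of $\calM$ holds recursively.

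\emph{Tight monomial cone.} Each $i_k$ occurs exactly twice in $\uli$. Between its two occurrences lie the other $i_s$ (twice each, pairing to zero against $h_{i_k}$ by (P2)) and every $j_t$ exactly once. So condition (\ref{equation:lusztig-cone-condition}) reads
\[
    2a_k + \sum_t \langle h_{i_k}, \alpha_{j_t}\rangle b_t \le 0.
\]
By (P3) the left-hand side equals $-a_k<0$, so the condition holds.

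\emph{Reducedness.} I would apply Lemma \ref{lemma:reduced-decomposition-dominant-weight} directly, or Lemma \ref{lemma:lusztig-cone-reduced-decomposition} after modifying the exponents. For the latter, replace the two outer exponents $a_k$ by $a_k'$ so that equality holds. Alternatively, take exponents $(c_k)$ on the first block and $(c_k')$ on the last block with $c_k + c_k' = 3a_k$, all positive (say $a_k$ and $2a_k$); then equality in the cone condition is exactly (P3). Lemma \ref{lemma:lusztig-cone-reduced-decomposition} then yields reducedness. Since the word is fixed, reducedness of $\uli$ follows.

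\emph{Property (iv).} The recursion removes the first letter $i_1$ and lowers the exponent of its second occurrence from $a_1$ to $0$. The resulting monomials are again of the shape of Example \ref{example:monomials-in-canonical-basis}, with zero exponents allowed. This is precisely why the Example permits non-reduced $\uli$ and notes that the subword of letters with nonzero exponents is reduced. The cone condition only becomes weaker once pairs disappear, and the remaining pairs $i_k$ still see all $j_t$ between them.

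The main obstacle I expect is this last verification: the recursion passes through a monomial with zero exponents, so $\calM$ must be read as allowing them. Concretely, one must check that after deleting letters with exponent zero the word stays reduced. For the surviving pairs I would redo the positivity/equality argument of Lemma \ref{lemma:lusztig-cone-reduced-decomposition}, and for the singletons use the fact that a subword of a reduced word in which each removed letter was a commuting, fully deleted pair stays reduced. If this becomes delicate, the fallback is to invoke Example \ref{example:monomials-in-canonical-basis} directly, since it asserts membership in $\calM$ for exactly this shape.
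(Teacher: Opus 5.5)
Your proposal is correct and takes the same route as the paper, which deduces the lemma from Proposition \ref{proposition:family-of-monomials-in-canonical-basis} via Example \ref{example:monomials-in-canonical-basis}; your use of (P2) to reorder the last block into nested form, and your explicit checks of the cone condition and of reducedness, supply details the paper leaves implicit. The obstacle you anticipate in property (iv) does not arise: every word produced by the recursion is a suffix of the reduced word $\uli$, hence itself reduced, and $L_\uli$ already allows zero entries, so no letters ever need to be deleted.
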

\begin{proof}
    This is an immediate consequence of Proposition \ref{proposition:family-of-monomials-in-canonical-basis} in view of Example \ref{example:monomials-in-canonical-basis}.
\end{proof}

\begin{theorem}
    \label{theorem:bilinear-form-values}
    We have
    \begin{enumerate}
        \item $(\theta^{[l]}, \theta^{[l]}) \in 1 + qA$,
        \item $(\theta^{[2l]}, \xi^{[l]}) \in 1 + qA$, and
        \item $(\xi^{[l]}, \xi^{[l]}) \in (l+1) + qA$.
    \end{enumerate}
\end{theorem}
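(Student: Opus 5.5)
Part (i) follows at once from Lemma \ref{lemma:theta-in-canonical-basis}: since $\theta^{[l]} \in \calB(\infty)$, and the canonical basis is almost orthonormal, we get $(\theta^{[l]}, \theta^{[l]}) \in 1 + qA$. For a check independent of that lemma, the computation in the proof of Proposition \ref{proposition:family-of-monomials-in-canonical-basis} applies directly to $\theta^{[l]}$.

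For (ii) and (iii) I will use a uniform computation based on (P1)--(P4). Write $F_a = f_{i_1}^{(a_1)}\cdots f_{i_p}^{(a_p)}$ and $G_b = f_{j_1}^{(b_1)}\cdots f_{j_q}^{(b_q)}$. By (P2) the factors inside $F_a$ commute, and so do those inside $G_b$. Hence $F_aF_{a'} = \prod_k \qbinom{a_k+a'_k}{a_k}_{i_k} F_{a+a'}$, and the leftmost block can be peeled off with any of its letters first.

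The plan is to move $e_{i_k}'^{a_k}$ (for each $k$) through the right-hand argument, exactly as in the proof of Proposition \ref{proposition:family-of-monomials-in-canonical-basis}. Since $F_a$ is on the left, $(F_a u, v) = \prod_k [a_k]_{i_k}!^{-1}(u, \prod_k e_{i_k}'^{a_k} v)$. For $v = \xi^{[l]}$ or $v=\theta^{[2l]}$, the operator $e_{i_k}'^{a_k}$ acts on the $i_k$-letters in each of the three $F$-blocks of $\xi$ (or two of $\theta$). The $q$-boson formula of Lemma \ref{lemma:q-boson-commutation-formulas} then gives a sum over ways to distribute the $a_k$ derivatives among these occurrences. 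Each term carries a power of $q_{i_k}$ whose exponent contains the factor $3a_k+\sum_t\langle h_{i_k},\alpha_{j_t}\rangle b_t$, which is zero by (P3), together with quadratic terms. This mirrors the exponent $\eta$ in the earlier proof. After stripping the left $F$ block, I do the same with the $G$ block, using (P4) in place of (P3), and induct downward on the number of blocks. The tightness equalities (P3) and (P4) are precisely the boundary case $a_1+a_r-\gamma=0$ of the earlier proof. So the leading exponent in a term vanishes exactly when the derivatives are allocated "at the extremes", and every other allocation lies in $qA$.

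For (ii), pairing $\theta^{[2l]} = F_{2a}G_{2b}F_{2a}$ against $\xi^{[l]} = F_aG_bF_{2a}G_bF_a$, I expect exactly one allocation to contribute a term in $1+qA$; the rest lie in $qA$. For (iii) the extremal allocations become degenerate: when $e'$ is applied to $\xi$, the derivatives may be split between the middle block $F_{2a}$ and the last block $F_a$ in a way that leaves the exponent zero. The split is governed by an integer $s$ with $0\le s\le l$, since every $a_k$ and $b_t$ is a multiple of $l$ with the proportions fixed by the classification. Each $s$ then yields a leading coefficient $1$, so the total is $(l+1)+qA$. I will organize this by writing $\xi^{[l]}$ modulo $qL(\infty)$ as a sum of $l+1$ distinct canonical basis elements plus $q$-multiples. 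Concretely, I will show $\xi^{[l]}\equiv\sum_{s=0}^{l} F_{a-s'}G_bF_{a+2s'}G_bF_{a-s'}$-type terms, where $s'$ is the vector proportional to $s$. Each such term is tight and orthonormal by Proposition \ref{proposition:family-of-monomials-in-canonical-basis}, and the terms are mutually orthogonal.

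The main obstacle is the exponent bookkeeping in (iii): verifying that, after (P3) and (P4) cancel the linear terms, the remaining quadratic form in the allocation parameters is nonnegative and vanishes on exactly $l+1$ allocations. I would first carry this out in the $G_2$ case $\theta_9$, where $p=q=1$ and the quadratic form is explicit in one variable. I would then argue that, by (P2), the general case factors as a sum over the commuting letters of copies of the same one-variable form, weighted by the ratios $a_k/l$, $b_t/l$. Nonnegativity then reduces to positive-definiteness of the symmetrized Cartan form restricted to the relevant combination.
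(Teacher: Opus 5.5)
Your part (i) and your overall skeleton for (ii) and (iii) agree with the paper: expand with Lemma \ref{lemma:q-boson-commutation-formulas}, cancel the linear terms with (P3) and (P4), then locate the zeros of a nonnegative exponent. However, the two steps you offer for actually obtaining $l+1$ both fail.

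First, the proposed congruence $\xi^{[l]}\equiv\sum_{s=0}^{l}F_{a-s'}G_bF_{a+2s'}G_bF_{a-s'}$, with each summand in $\calB(\infty)$ by Proposition \ref{proposition:family-of-monomials-in-canonical-basis}, is false. The weights do not match; the middle block would have to be $F_{2a+2s'}$. After that correction, the $s=0$ summand is $\xi^{[l]}$ itself, which is precisely what the theorem shows is \emph{not} in $\calB(\infty)$. For $0<s<l$, every $i_k$ occurs three times with nonzero exponent, so these monomials are not in $\calM$. Moreover, by (P3) the tight-cone inequality for the first two occurrences of $i_k$ reads $(a_k-s'_k)+(2a_k+2s'_k)+\sum_t\langle h_{i_k},\alpha_{j_t}\rangle b_t=s'_k>0$, so it fails. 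Already for $l=1$ the canonical-basis constituents of $\xi^{[1]}$ are $\theta^{[2]}$ and $\xi^{[1]}-\theta^{[2]}$ (Corollary \ref{corollary:l1-in-canonical-basis}), not monomials of your shape. So there is no shortcut through Proposition \ref{proposition:family-of-monomials-in-canonical-basis}; you must count the zeros of the exponent directly.

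Second, your zero count assumes its answer. That all $a_k,b_t$ are multiples of $l$ in fixed proportion does not imply that the vanishing allocations are proportional. In your proposed reduction to a sum over the commuting letters of one-variable forms, there are two readings. If each $i_k$ has its own variable, the zeros are counted letter by letter and you get a product, e.g.\ $(l+1)^2$ in case (1) of Theorem \ref{theorem:classification}, where both $i$-letters $2,4$ carry exponent $l$. If instead all letters share one variable $s$, you are assuming the very proportionality that needs proof.

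The missing ingredient is the coupling through the variables $m_t$, which record how the $e'_{j_t}$ are split between the two $G_b$ blocks. In the paper's exponent $Q_2$ they enter as $\sum_{s,t}c_{s,t}\bigl(m_t+c_{s,t}^{-1}(\alpha_{i_s},\alpha_{j_t})l'_s\bigr)^2$, with $c_{s,t}>0$ exactly for adjacent $i_s,j_t$. At a zero, each $m_t$ is therefore dictated simultaneously by all neighbouring $i_s$, which ties the $k_s$ together: $k_1=k_2$ in case (1), and $k_2=2k_1$, $k_3=k_1$ in case (2). Connectedness of the Dynkin diagram then makes the zero set a one-parameter family indexed by $k_1\in[0,l]$, each zero contributing $1$ modulo $qA$.

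Your induction on blocks is also not available as stated: stripping a block does not return an element of the same shape (there is no analogue of property (iv) of $\calM$). This is why the paper expands everything at once.

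For (ii), ``I expect'' must be replaced by the explicit reduction of the exponent to $\sum_s\tfrac12(\alpha_{i_s},\alpha_{i_s})\bigl((k_s-a_s)^2+(l_s-a_s)^2+(k_s+l_s-2a_s)^2+2a_s(3a_s-2k_s-l_s)\bigr)$. This is nonnegative on the summation range and vanishes only at $k_s=l_s=a_s$.
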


\begin{remark}
    In particular Theorem \ref{theorem:bilinear-form-values} states that $\xi^{[l]}$ is not an element of the canonical basis for $l \ge 1$.
    This shows Theorem 2.1 of both \cite{Wang:TightMonomials-G2-A3} and \cite{Wang:TightMonomials-Rank2-KacMoodyLieAlgebras} does not hold in the generality stated there.
\end{remark}

\begin{proof}
    (1) is a consequence of Lemma \ref{lemma:theta-in-canonical-basis}.
    Let $\theta$ and $\xi$ be as in (\ref{equation:notation-theta}) and (\ref{equation:notation-xi}); we begin with the proof of (2).
    For this, rewrite the bilinear form into
    \begin{align*}
        (\theta^{[2l]}, \xi^{[l]}) ={}& [2a_1]_{i_1}!^{-2}\cdots [2a_p]_{i_p}!^{-2} [2b_1]_{j_1}!^{-1}\cdots [2b_q]_{j_q}!^{-1} \\
        &\times (1, e_{i_1}'^{2a_1}\cdots e_{i_p}'^{2a_p} e_{j_1}'^{2b_1}\cdots e_{j_q}'^{2b_q} e_{i_1}'^{2a_1}\cdots e_{i_p}'^{2a_p}\xi^{[l]})
    \end{align*}
    and by expanding the action of the $e_i'$ this becomes
    \begin{equation}
        \tag{I}
        \sum_{k_1=0}^{a_1} \sum_{l_1=a_1-k_1}^{2a_1-k_1} \dots \sum_{k_p=0}^{a_p} \sum_{l_p=a_p-k_p}^{2a_p-k_p} A(\ulk, \ull) B(\ulk, \ull),
    \end{equation}
    where $\ulk = (k_1, \dots, k_p)$, $\ull = (l_1, \dots, l_p)$,
    \begin{align*}
        & A(\ulk, \ull) = \\
        & \prod_{s=1}^p [2a_s]_{i_s}!^{-2} q_{i_s}^{-4a_s^2+3k_sa_s-k_s(k_s-1)/2} \qbinom{2a_s}{k_s}_{i_s} q_{i_s}^{-(2a_s-k_s)\sum_{t=1}^q \langle h_{i_s}, \alpha_{j_t}\rangle b_t} \\
        &\times q_{i_s}^{-2(2a_s-k_s)2a_s + l_s(4a_s-k_s) - l_s(l_s-1)/2} \qbinom{2a_s - k_s}{l_s}_{i_s} q_{i_s}^{-(2a_s-k_s-l_s) \sum_{t=1}^q \langle h_{i_s}, \alpha_{j_t}\rangle b_t}\\
        &\times q_{i_s}^{-2(2a_s-k_s-l_s)a_s + (2a_s-k_s-l_s)(3a_s-k_s-l_s) - (2a_s-k_s-l_s)(2a_s-k_s-l_s-1)/2} \\
        &\times q_{i_s}^{-4a_s(a_s-k_s) + (a_s-k_s)(3a_s-k_s) - (a_s-k_s)(a_s-k_s-1)/2} \qbinom{2a_s}{a_s-k_s}_{i_s} \\
        &\times q_{i_s}^{-2(a_s+k_s)(2a_s-l_s) + (2a_s-l_s)(3a_s+k_s-l_s) - (2a_s-l_s)(2a_s-l_s-1)/2} \qbinom{a_s+k_s}{2a_s-l_s}_{i_s} \\
        &\times q_{i_s}^{-2(-a_s+k_s+l_s)^2 + 2(-a_s+k_s+l_s)^2 - (k_s+l_s-a_s)(k_s+l_s-a_s-1)/2},
    \end{align*}
    and
    \begin{align*}
        & B(\ulk, \ull) = \\
        & \prod_{t=1}^q [2b_t]_{j_t}!^{-1} q_{j_t}^{-2b_t \sum_{s=1}^p \langle h_{j_t}, \alpha_{i_s}\rangle (a_s-k_s)} q_{j_t}^{-4b_t^2 + 3b_t^2 - b_t(b_t-1)/2} \qbinom{2b_t}{b_t}_{j_t} \\
        &\times q_{j_t}^{-b_t \sum_{s=1}^p\langle h_{j_t}, \alpha_{i_s}\rangle (2a_s-l_s)} q_{j_t}^{-2b_t^2 + 2b_t^2 - b_t(b_t-1)/2}.
    \end{align*}
    Let us explain the summation ranges in (I) for $s\in [1, p]$.
    The upper bound on $k_s$ is due to the fact the first $f_{i_s}$ in $\xi$ has exponent $a_s$, and the lower bound of $l_s$ comes from the requirement that for the last $f_{i_s}$ we have
    \[
        e_{i_s}'^{2a_s-k_s-l_s}f_{i_s}^{(a_s)} \neq 0,
    \]
    that is $2a_s-k_s-l_s \le a_s$.

    For every factor of the product $\prod_{s=1}^p$ in $A(\ulk, \ull)$ we have
    \begin{align*}
        & q_{i_s}^{-k_s(k_s-1)/2 - l_s(l_s-1)/2 - (2a_s-k_s-l_s)(2a_s-k_s-l_s-1)/2 - (a_s-k_s)(a_s-k_s-1)/2 } \\
        &\times q_{i_s}^{-(2a_s-l_s)(2a_s-l_s-1)/2 - (k_s+l_s-a_s)(k_s+l_s-a_s-1)/2} \\
        &\times [2a_s]_{i_s}!^{-2} \qbinom{2a_s}{k_s}_{i_s} \qbinom{2a_s - k_s}{l_s}_{i_s} \qbinom{2a_s}{a_s-k_s}_{i_s} \qbinom{a_s+k_s}{2a_s-l_s}_{i_s} \in 1 + qA,
    \end{align*}
    and for every factor of $\prod_{t=1}^q$ in $B(\ulk, \ull)$ we have
    \[
        q_{j_t}^{-b_t(b_t-1)} [2b_t]_{j_t}!^{-1} \qbinom{2b_t}{b_t}_{j_t} \in 1 + qA.
    \]
    Now, from $B(\ulk, \ull)$ we extract
    \[
        \prod_{t=1}^q q_{j_t}^{-b_t \sum_{s=1}^p \langle h_{j_t}, \alpha_{i_s}\rangle (4a_s-2k_s-l_s)} = \prod_{s=1}^p q^{-(4a_s-2k_s-l_s)\sum_{t=1}^q (\alpha_{i_s}, \alpha_{i_t}) b_t}.
    \]
    using the identity $q_{j_t}^{\langle h_{j_t}, \alpha_{i_s}\rangle} = q^{(\alpha_{i_s}, \alpha_{i_t})}$.
    Combining this with the remaining terms in $A(\ulk, \ull)$ we get
    \begin{align*}
       \prod_{s=1}^p q^{\tfrac12(\alpha_{i_s}, \alpha_{i_s}) (-9a_s^2 + 2a_sk_s - 2a_sl_s + 2k_s^2 + 2k_sl_s + 2l_s^2) - (8a_s-4k_s-2l_s) \sum_{t=1}^q (\alpha_{i_s}, \alpha_{i_t}) b_t}.
    \end{align*}
    Using the inner product version of (P3), $3(\alpha_{i_s}, \alpha_{i_s})a_s = -2\sum_{t=1}^q (\alpha_{i_s}, \alpha_{j_t}) b_t$, the above expression becomes
    \[
        q^{\tfrac12(\alpha_{i_s}, \alpha_{i_s})(15a_s^2 - 10a_sk_s - 8a_sl_s + 2k_s^2 + 2k_sl_s + 2l_s^2)}.
    \]
    Accounting for the remaining $\prod_{t=1}^q q_{j_t}^{-b_t^2}$ from $B(\ulk, \ull)$, one sees that (I) is equal to
    \[
        q^{\frac32\sum_{s=1}^p (\alpha_{i_s}, \alpha_{i_s}) a_s^2 - \frac12 \sum_{t=1}^q (\alpha_{j_t}, \alpha_{j_t}) b_t^2} \sum_{k_1=0}^{a_1} \sum_{l_1=a_1-k_1}^{2a_1-k_1} \dots \sum_{k_p=0}^{a_p} \sum_{l_p=a_p-k_p}^{2a_p-k_p}  q^{Q_1(\ulk, \ull)},
    \]
    where $Q_1(\ulk, \ull)$ is given by
    \[
        \sum_{s=1}^p \tfrac12(\alpha_{i_s}, \alpha_{i_s}) ((k_s-a_s)^2 + (l_s-a_s)^2 + (k_s+l_s-2a_s)^2 + 2a_s(3a_s-2k_s-l_s)).
    \]
    Using the inner product version of (P3) again and the one of (P4), $(\alpha_{j_t}, \alpha_{j_t})b_t = -2\sum_{s=1}^q (\alpha_{j_t}, \alpha_{i_s})a_s$, we see that
    \[
        q^{\frac32\sum_{s=1}^p (\alpha_{i_s}, \alpha_{i_s}) a_s^2 - \frac12 \sum_{t=1}^q (\alpha_{j_t}, \alpha_{j_t}) b_t^2} = 1.
    \]
    It remains to consider $Q_1(\ulk, \ull)$. Looking at the summation ranges of (I) we see that $Q_1(\ulk, \ull)$ is nonnegative, and it is zero if and only if $k_s = l_s = a_s$. This shows that every summand in (I) is in $qA$ except for the case where $k_s = l_s = a_s$ for all $1 \le s \le p$; it follows that $(\theta^{[2l]}, \xi^{[l]}) \in 1 + qA$.

    The proof of (3) is similar to the one of (2). We begin again by expanding
    \begin{align*}
        (\xi^{[l]}, \xi^{[l]}) ={}& [a_1]_{i_1}!^{-2}\cdots [a_p]_{i_p}!^{-2} [b_1]_{j_1}!^{-2}\cdots [b_q]_{j_q}!^{-2} [2a_1]_{i_1}!^{-1}\cdots [2a_p]_{i_p}!^{-1} \\
        &\times (1, e_{i_1}'^{a_1}\cdots e_{i_p}'^{a_p} e_{j_1}'^{b_1}\cdots e_{j_q}'^{b_q} e_{i_1}'^{2a_1}\cdots e_{i_p}'^{2a_p} e_{j_1}'^{b_1}\cdots e_{j_q}'^{b_q} e_{i_1}'^{a_1}\cdots e_{i_p}'^{a_p} \xi)
    \end{align*}
    into
    \begin{align*}
        \tag{II}
        & \sum_{k_1=0}^{a_1}\sum_{l_1=0}^{a_1-k_1} \sum_{k_1'=0}^{a_1-k_1} \sum_{l_1'=2a_1'-k_1-l_1-k_1'}^{2a_1-k_1'} \dots \sum_{k_p=0}^{a_p}\sum_{l_p=0}^{a_p-k_p} \sum_{k_p'=0}^{a_p-k_p} \sum_{l_p'=2a_p'-k_p-l_p-k_p'}^{2a_p-k_p'} \\
        & \sum_{m_1=0}^{b_1} \dots \sum_{m_q=0}^{b_q} A(\ulk, \ull, \ulk', \ull', \ulm) B(\ulk, \ull, \ulk', \ull', \ulm),
    \end{align*}
    where $\ulk = (k_1, \dots, k_p)$, $\ull = (l_1, \dots, l_p)$, etc.,
    \begin{align*}
        & A(\ulk, \ull, \ulk', \ull', \ulm) = \\
        &\prod_{s=1}^p [a_s]_{i_s}!^{-2} [2a_s]_{i_s}!^{-1} q_{i_s}^{-2a_s^2+2k_sa_s-k_s(k_s-1)/2} \qbinom{a_s}{k_s}_{i_s} q_{i_s}^{-(a_s-k_s)\sum_{t=1}^q \langle h_{i_s}, \alpha_{j_t}\rangle b_t} \\
        &\times q_{i_s}^{-2(a_s-k_s)2a_s + l_s(3a_s - k_s) - l_s(l_s-1)/2} \qbinom{a_s-k_s}{l_s}_{i_s} q_{i_s}^{-(a_s-k_s-l_s)\sum_{t=1}^q \langle h_{i_s}, \alpha_{j_t}\rangle b_t} \\
        &\times q_{i_s}^{-2(a_s-k_s-l_s)a_s + (a_s-k_s-l_s)(2a_s-k_s-l_s) - (a_s-k_s-l_s)(a_s-k_s-l_s-1)/2} \\
        &\times q_{i_s}^{-2(a_s-k_s)2a_s + k_s'(3a_s-k_s) + k_s'(k_s'-1)/2} \qbinom{2a_s}{k_s'}_{i_s} q_{i_s}^{-(2a_s-k_s') \sum_{t=1}^q \langle h_{i_s}, \alpha_{j_t}\rangle (b_t - m_t)} \\
        &\times q_{i_s}^{-2(2a_s-k_s')(2a_s-l_s) + l_s'(4a_s-l_s-k_s') - l_s'(l_s'-1)/2} \qbinom{2a_s}{l_s'}_{i_s} q_{i_s}^{-(2a_s-k_s'-l_s') \sum_{t=1}^q \langle h_{i_s}, \alpha_{j_t}\rangle m_t} \\
        &\times q_{i_s}^{-2(2a_s-k_s'-l_s')(k_s+l_s) + (2a_s-k_s'-l_s')(2a_s+k_s+l_s-k_s'-l_s') - (2a_s-k_s'-l_s')(2a_s-k_s'-l_s'-1)/2} \\
        &\times q_{i_s}^{-2a_s(a_s-k_s-k_s') + (a_s-k_s-k_s')(2a_s-k_s-k_s') - (a_s-k_s-k_s')(a_s-k_s-k_s'-1)/2} \qbinom{a_s}{a_s-k_s-k_s'}_{i_s} \\
        &\times q_{i_s}^{-2(k_s+k_s')(2a_s-l_s-l_s') + (2a_s-l_s-l_s')(2a_s+k_s-l_s+k_s'-l_s') - (2a_s-l_s-l_s')(2a_s-l_s-l_s'-1)/2} \\
        &\quad \times\qbinom{k_s+k_s'}{-2a_s+k_s+l_s+k_s'+l_s'}_{i_s} \\
        &\times q_{i_s}^{-2(-2a_s+k_s+l_s+k_s'+l_s')^2 + 2(-2a_s+k_s+l_s+k_s'+l_s')^2} \\
        &\times q_{i_s}^{-(-2a_s+k_s+l_s+k_s'+l_s')(-2a_s+k_s+l_s+k_s'+l_s'-1)/2},
    \end{align*}
    and
    \begin{align*}
        & B(\ulk, \ull, \ulk', \ull', \ulm) = \\
        & \prod_{t=1}^q [b_t]_{j_t}!^{-2} q_{j_t}^{-b_t\sum_{s=1}^p \langle h_{j_t}, \alpha_{i_s}\rangle (a_s-k_s)} q_{j_t}^{-2b_t^2 + 2m_tb_t - m_t(m_t-1)/2} \qbinom{b_t}{m_t}_{j_t} \\
        &\times q_{j_t}^{-(b_t-m_t)\sum_{s=1}^p \langle h_{j_t}, \alpha_{i_s}\rangle (2a_s-l_s)} q_{j_t}^{-2(b_t-m_t)b_t + (b_t-m_t)(2b_t-m_t) - (b_t-m_t)(b_t-m_t-1)/2} \\
        &\times q_{j_t}^{-b_t\sum_{s=1}^p \langle h_{j_t}, \alpha_{i_s}\rangle (a_s-k_s-k_s')} q_{j_t}^{-2(b_t-m_t)b_t + (b_t-m_t)(2b_t-m_t) + (b_t-m_t)(b_t-m_t-1)/2} \\
        &\quad \times\qbinom{b_t}{b_t-m_t}_{j_t} \\
        &\times q_{j_t}^{-m_t\sum_{s=1}^p \langle h_{j_t}, \alpha_{i_s}\rangle (2a_s-l_s-l_s')} q_{j_t}^{-2m_t^2 + 2m_t^2 - m_t(m_t-1)/2}.
    \end{align*}
    Taking care of the q-integers and q-binomials as above and rearranging the remaining terms in $A(\ulk, \ull, \ulk', \ull', \ulm)$ and $B(\ulk, \ull, \ulk', \ull', \ulm)$, we can write (II) as
    \begin{align*}
        & q^{3\sum_{s=1}^p(\alpha_{i_s}, \alpha_{i_s})a_t^2 - \sum_{t=1}^q (\alpha_{i_t}, \alpha_{i_t})b_t^2}  \sum_{k_1=0}^{a_1} \sum_{l_1=0}^{a_1-k_1} \sum_{k_1'=0}^{a_1-k_1} \sum_{l_1'=2a_1'-k_1-l_1-k_1'}^{2a_1-k_1'} \dots \\
        & \sum_{k_p=0}^{a_p}\sum_{l_p=0}^{a_p-k_p} \sum_{k_p'=0}^{a_p - k_p} \sum_{l_p'=2a_p'-k_p-l_p-k_p'}^{2a_p-k_p'} \sum_{m_1=0}^{b_1} \dots \sum_{m_q=0}^{b_q} q^{Q_2(\ulk, \ull, \ulk', \ull', \ulm)},
    \end{align*}
    where $Q_2(\ulk, \ull, \ulk', \ull', \ulm)$ is given by
    \begin{align*}
        & \sum_{s=1}^p \tfrac12(\alpha_{i_s}, \alpha_{i_s})((k_s + l_s - a_s)^2 + (k_s' + \tfrac12 l_s' - a_s)^2 \\
        &+ (k_s + k_s' - a_s)^2 + (l_s + \tfrac12 l_s' - a_s)^2 + (2a_s - l_s')(2a_s - 2k_s - k_s' - l_s)) \\
        &+ \sum_{1\le t \le q, c_{s,t} \neq 0} c_{s,t} (m_t + c_{s,t}^{-1} (\alpha_{i_s}, \alpha_{j_t}) l_s')^2,
    \end{align*}
    and the coefficients $c_{s,t}\in \bbQ_{\ge0}$ are uniquely determined by the equations
    \begin{align*}
        c_{s,t} &= 0 & \text{if } (\alpha_{i_s}, \alpha_{j_t}) = 0, \\
        \tfrac34 (\alpha_{i_s}, \alpha_{i_s}) &= \sum_{1\le t \le q, (\alpha_{i_s}, \alpha_{j_t}) \neq 0} c_{s,t}^{-1} (\alpha_{i_s}, \alpha_{j_t})^2 & \text{for } 1 \le s \le p, \\
       (\alpha_{j_t}, \alpha_{j_t}) &= \sum_{s=1}^p c_{s,t} & \text{for } 1 \le t \le q.
    \end{align*}
    Note that $c_{s,t} = 0$ if and only if $(\alpha_{i_s}, \alpha_{j_t}) = 0$. By our previous arguments, it is clear that
    \[
        q^{3\sum_{s=1}^p(\alpha_{i_s}, \alpha_{i_s})a_t^2 - \sum_{t=1}^q (\alpha_{i_t}, \alpha_{i_t})b_t^2} = 1.
    \]
    Thus, we only have to consider the zeros of $Q_2(\ulk, \ull, \ulk', \ull', \ulm)$, which occur for $l_s  = a_s - k_s$, $k_s' = a_s - k_s$, $l_s' = 2k_s$, $m_t = -2k_s c_{s,t}^{-1}(\alpha_{i_s}, \alpha_{i_t})$. By (P1) the sets $\{i_1, \dots, i_p\}$ and $\{ j_1, \dots, j_q\}$ form a partition of $I$, especially since $I$ is connected, the solution is already uniquely determined by fixing $k_1$. Finally, a case-by-case analysis shows that $(\xi^{[l]}, \xi^{[l]}) \in (l+1) + qA$.
\end{proof}

\begin{example}
    Let $\frakg$ be of type $G_2$.
    We consider (9) of Theorem \ref{theorem:classification}:
    \[
        \theta^{[a]} = f_1^{(a)} f_2^{(a)} f_1^{(a)} \text{ and } \xi^{[a]} = f_1^{(a)} f_2^{(a)} f_1^{(2a)} f_2^{(a)} f_1^{(a)}.
    \]
    We want to explicitly determine the set of zeros $S_a$ of
    \[
        Q_2(\ulk,\ull,\ulk,\ull',\ulm) = Q_2(k,l,k',l',m)
    \]
    from the previous proof on the domain defined by $0 \le k \le a$, $0 \le k' \le a-k$, $0 \le l \le a-k$, $2a-k-l-k' \le l' \le 2a-k'$ and $0 \le m \le a$.
    For this we understand $Q_2(k,l,k',l',m)$ as an element of the polynomial ring $\bbQ[k,l,k',l',m]$ with the lexicographic order $k > l > k' > l' > m$.
    In this case the only coefficient is $c_{1,2} = 6$ and thus
    \begin{align*}
        Q_2(k,l,k',l',m) ={}& (k + k' - a)^2 + (k' + \tfrac12 l' - a)^2 + (k + l - a)^2 \\
        &+ (l + \tfrac12 l' - a)^2 + (2a - l')(2a - 2k - k' - l) + 6(m - \tfrac12 l')^2.
    \end{align*}
    We now consider the ideal $J$ generated by
    \[
        k+k'-a,\quad k'+\tfrac12 l'-a,\quad k+l - a,\quad l + \tfrac12 l' - a,\quad m - \tfrac12 l'.
    \]
    Computing a Groebner basis of $J$ with respect to the above lexicographic order and applying the division algorithm to $Q_2(k,l,k',l',m)$ yields
    \begin{align*}
        Q_2(k,l,k',l',m) ={}& (k - m)(2k + 2k' + 2l + 2l' + 2m - 8a) \\
        &+ (l + m - a)(2l + 2l' - 4a) \\
        &+ (k' + m - a)(2k' + 2l' - 4a) \\
        &+ (l' - 2m)(2l' - 4m).
    \end{align*}
    Therefore, $S_1 = \{x, y\}$, where $x = (0,1,1,0,0)$ and $y = (1,0,0,2,1)$, and for $a > 1$ we have $S_a = (S_{a-1} + x) \cup (S_{a-1} + y)$.
    In particular, all zeros lie on the lattice spanned by $x$ and $y$.
\end{example}

\begin{remark}
    \label{remark:xi-canonical-basis-expansion}
    In the cases (1), (2), (7) and (8) of Theorem \ref{theorem:classification}, Baumann \cite{Baumann:CanonicalBasis-QuantumFrobeniusMorphism} found an explicit description of $\xi^{[l]}$ in terms of the canonical basis.
    More precisely, he showed that
    \[
        \xi^{[l]} = G(b_{l,1}) + \ldots + G(b_{l,l+1})
    \]
    for certain $b_{l,1}, \dots, b_{l,l+1} \in B(\infty)$.
    This agrees with Theorem \ref{theorem:bilinear-form-values} (3), thus we expect to find a similar description in the remaining cases as well.
    The following corollary is a special case of this for $l = 1$.
\end{remark}

\begin{corollary}
    \label{corollary:l1-in-canonical-basis}
    We have $\xi^{[1]} - \theta^{[2]} \in \calB(\infty)$.
\end{corollary}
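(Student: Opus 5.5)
We apply Theorem \ref{theorem:canonical-basis-criterion} to $u := \xi^{[1]} - \theta^{[2]}$. Both $\xi^{[1]}$ and $\theta^{[2]}$ are products of divided powers, so $u \in U_\bbZ^-(\frakg)$ and $\bar u = u$. It remains to compute $(u,u)$ modulo $qA$. By bilinearity and symmetry,
\[
    (u,u) = (\xi^{[1]},\xi^{[1]}) - 2(\theta^{[2]},\xi^{[1]}) + (\theta^{[2]},\theta^{[2]}).
\]
Theorem \ref{theorem:bilinear-form-values} gives the three terms as $2 + qA$, $1 + qA$ and $1+qA$, respectively (for (1) with $l=2$). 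Hence $(u,u) \in 2 - 2 + 1 + qA = 1 + qA$, and Theorem \ref{theorem:canonical-basis-criterion} shows that $u$ or $-u$ lies in $\calB(\infty)$.

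The main step is fixing the sign. We expand $\xi^{[1]}$ in the canonical basis. Since $\xi^{[1]}$ is a product of divided powers, it lies in $L(\infty)$ and is bar-invariant, so $\xi^{[1]} = \sum_b c_b G(b)$ with $c_b \in \bbZ[q,q^{-1}]$ bar-invariant. Because $\xi^{[1]} \in L(\infty)$, each $c_b \in \bbZ$. The Gram matrix of $\calB(\infty)$ is $\equiv \id$ modulo $qA$, so $\sum_b c_b^2 = 2$, and the coefficient of $\theta^{[2]} \in \calB(\infty)$ (Lemma \ref{lemma:theta-in-canonical-basis}) is $c_{\theta^{[2]}} \equiv (\theta^{[2]},\xi^{[1]}) \equiv 1$. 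Therefore $\xi^{[1]} = \theta^{[2]} \pm G(b')$ for a single $b' \neq b_{\theta^{[2]}}$, and $u = \pm G(b')$.

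To show the sign is $+$, we use positivity. For symmetric types ($A_5$, $D_4$) the structure constants of products of canonical basis elements lie in $\bbN[q,q^{-1}]$, so monomials in divided powers are $\bbN[q,q^{-1}]$-combinations of $\calB(\infty)$ and the sign is $+$. In the non-simply-laced cases, where geometric positivity is not directly available, the plan is to identify $b'$ by its adapted string. We would pick the word underlying $\xi^{[1]}$, find a reduced word and use Proposition \ref{proposition:base-change-canonical-basis-string-basis}, which says a string monomial has leading coefficient $+1$ at its own $G(b)$. Concretely, we would compute $\te_i$-actions: apply $e_i'$ to $u$, for example $e_{i_1}'^{a_1}$ with $a_1$ the leading exponent, and compare with the expected image of $\pm G(b')$ under the crystal operators. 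This reduces to checking the sign of a single coefficient in one explicit computation per type. We expect this sign determination to be the only real obstacle. If positivity (Lusztig's result for Kac--Moody types via foldings) is available, we invoke it uniformly instead.
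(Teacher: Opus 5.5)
Your first half (integrality, bar-invariance, and $(u,u)\in 1+qA$ from Theorem \ref{theorem:bilinear-form-values}, hence $u\in\pm\calB(\infty)$) is exactly the paper's argument. The gap is in fixing the sign. For $A_5$ and $D_4$ your positivity argument is valid: Lusztig's geometric positivity for symmetric Cartan data, together with uniqueness of the expansion in a basis, gives the sign $+$. For $B_3$, $C_3$ and $G_2$, however, you give only a conditional and an unfinished sketch. Positivity of divided-power monomials in $\calB(\infty)$ is not a result you can simply cite in non-symmetric types: folding only expresses the structure constants as traces of a diagram automorphism on multiplicity spaces, and these are not manifestly nonnegative. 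The proposed $\te_i$/$e_i'$ computation is also never carried out. So as written, the sign is undetermined in those three types.

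A smaller slip: a product of divided powers need not lie in $L(\infty)$ (e.g.\ $f_if_i=[2]_if_i^{(2)}$). Here $\xi^{[1]}\in L(\infty)$ holds because $(\xi^{[1]},\xi^{[1]})\in 2+qA\subseteq A$, by Kashiwara's characterization of $L(\infty)$.

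The missing observation is that no computation is needed, and you need not search for a reduced word: Section \ref{section:tight-monomial-cones} applies directly to $\xi^{[1]}$.
\begin{enumerate}
\item Write $\xi^{[1]}=f_\ulj^{(\ulb)}$. All entries of $\ulb$ are positive, and by (P2)--(P4) every tight-cone inequality for $\ulb$ holds with equality. So $\ulj$ is reduced by Lemma \ref{lemma:lusztig-cone-reduced-decomposition}, and $\ulb\in L_\ulj\subseteq S_\ulj(\infty)$ by Proposition \ref{proposition:lusztig-cone-string-cone}.
\item Proposition \ref{proposition:base-change-canonical-basis-string-basis} then says that $G(b_\ulb)$, where $b_\ulb\in B(\infty)$ has adapted string $\ulb$, occurs in $\xi^{[1]}$ with coefficient exactly $+1$.
\item On the other side, write $\theta^{[1]}=f_\uli^{(\ula)}$ with $\uli$ the (reduced) prefix of $\ulj$. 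Then $\theta^{[2]}=f_\uli^{(2\ula)}\in\calB(\infty)$ equals $G(c)$, where $c$ has adapted string $2\ula$ (equivalently, $2\ula$ padded by zeros with respect to $\ulj$). The first entries $2a_1$ and $a_1$ differ, so $c\neq b_\ulb$.
\item Comparing with $\xi^{[1]}=\theta^{[2]}\pm G(b')$ forces $b'=b_\ulb$ and the sign $+$.
\end{enumerate}
This is the paper's argument. It is uniform over all five types, uses only the unitriangularity of the string monomials, and needs neither positivity nor your integer-coefficient bookkeeping.
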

\begin{proof}
    It is obvious that $\xi^{[1]} - \theta^{[2]} \in U_\bbZ^-(\frakg)$ and $\overline{\xi^{[1]} - \theta^{[2]}} = \xi^{[1]} - \theta^{[2]}$.
    Furthermore, we have
    \[
        (\xi^{[1]} - \theta^{[2]}, \xi^{[1]} - \theta^{[2]}) = (\xi^{[1]}, \xi^{[1]}) - 2(\xi^{[1]}, \theta^{[2]}) + (\theta^{[2]}, \theta^{[2]}) = 1 \mod qA.
    \]
    Thus, Theorem \ref{theorem:canonical-basis-criterion} implies that $\xi^{[1]} - \theta^{[2]} \in \pm \calB(\infty)$.

    Write $\theta^{[1]} = f_\uli^{(\ula)}$ and $\xi^{[1]} = f_\ulj^{(\ulb)}$.
    Since $\ula \in L_\uli$ and $\ulb \in L_\ulj$ fulfill the conditions of Lemma \ref{lemma:lusztig-cone-reduced-decomposition}, we know that $\uli$ and $\ulj$ are reduced words.
    By Proposition \ref{proposition:lusztig-cone-string-cone}, $\ula \in S_\uli(\infty)$ and $\ulb \in S_\ulj(\infty)$.
    We may consider $\ula$ as an element of $S_\ulj(\infty)$ by extending it to the right with zeros.
    Thus, we have $\ulb < \ula$ with respect to the lexicographic order on $S_\ulj(\infty)$.
    Hence, Proposition \ref{proposition:base-change-canonical-basis-string-basis} allows us to conclude that $\xi^{[1]} - \theta^{[2]} \in \calB(\infty)$.
\end{proof}

\begin{corollary}
    We have $\theta^{[l]}, \xi^{[l]} \in U_\bbZ^-(\frakg) \cap L(\infty) \cap \overline{L(\infty)}$ for all $l \in \bbN$.
\end{corollary}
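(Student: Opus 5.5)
Three properties have to be checked for $\theta^{[l]}$ and $\xi^{[l]}$: integrality, membership in $L(\infty)$, and membership in $\overline{L(\infty)}$.

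\emph{Integrality.} Both elements are products of divided powers $f_i^{(n)}$. Hence they lie in $U_\bbZ^-(\frakg)$ by definition of the $\bbZ$-form.

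\emph{Bar-invariance.} Every $f_i^{(n)}$ is bar-invariant, since $\overline{[n]_i!} = [n]_i!$. So $\overline{\theta^{[l]}} = \theta^{[l]}$ and $\overline{\xi^{[l]}} = \xi^{[l]}$. It is therefore enough to prove membership in $L(\infty)$; membership in $\overline{L(\infty)}$ then follows by applying the bar involution.

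\emph{The case of $\theta^{[l]}$.} By Lemma \ref{lemma:theta-in-canonical-basis}, $\theta^{[l]} \in \calB(\infty) = G(B(\infty))$. By construction $G(b) \in L(\infty)\cap\overline{L(\infty)}$, so this case is immediate.

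\emph{The case of $\xi^{[l]}$.} I would use the standard characterization of $L(\infty)$ through the bilinear form, which comes from Kashiwara's polarization (Propositions 5.1.2 and 5.1.3 of \cite{Kashiwara:CrystalBases-QuantizedUniversalEnvelopingAlgebras}, already used for Theorem \ref{theorem:canonical-basis-criterion}). It states
\[
    L(\infty) = \{ u \in U_q^-(\frakg) \mid (u, L(\infty)) \subseteq A \},
\]
and $(L(\infty), L(\infty)) \subseteq A$. Write $\xi^{[l]} = \sum_{b} c_b G(b)$. The coefficients $c_b$ lie in $\bbZ[q,q^{-1}]$ because $\xi^{[l]} \in U_\bbZ^-(\frakg)$ and $\calB(\infty)$ is a $\bbZ[q,q^{-1}]$-basis of $U_\bbZ^-(\frakg)$. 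They are bar-invariant because $\xi^{[l]}$ and every $G(b)$ are bar-invariant.

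Next, $(G(b), G(b')) \in \delta_{bb'} + qA$, because $B(\infty)$ is orthonormal modulo $q$. Hence
\[
    (\xi^{[l]}, \xi^{[l]}) = \sum_{b,b'} c_b c_{b'} (G(b), G(b')).
\]
Let $N$ be the largest exponent of $q^{-1}$ appearing among the $c_b$, and suppose $N > 0$. By bar-invariance each such $c_b$ has leading term $c\,(q^{N} + q^{-N})$ with $c \neq 0$. The coefficient of $q^{-2N}$ in the double sum is then $\sum_{b} c_{b,-N}^2 > 0$. Here the cross terms carry an extra factor in $qA$ and so only contribute higher powers of $q$. This gives a pole of order $2N$, which contradicts Theorem \ref{theorem:bilinear-form-values}(3), since that theorem gives $(\xi^{[l]}, \xi^{[l]}) \in (l+1) + qA$ and in particular the value is regular at $q=0$.

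Therefore $N = 0$. All $c_b$ are then bar-invariant Laurent polynomials with no negative powers, so they are integer constants. Consequently $\xi^{[l]}$ is a $\bbZ$-combination of the $G(b)$, and hence lies in $L(\infty)\cap\overline{L(\infty)}$.

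\emph{Main obstacle.} The delicate point is the leading-term argument. One must make sure the cross terms $c_b c_{b'} (G(b), G(b'))$ with $b \neq b'$ cannot cancel the $q^{-2N}$ contribution. This holds because they are $q$-multiples of products whose order is at most $2N$, so they contribute at worst order $2N-1$, while the diagonal contribution is a sum of squares of nonzero integers.

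As a consistency check, the diagonal sum $\sum c_b^2$ must equal $l+1$. This agrees with Baumann's expansion in Remark \ref{remark:xi-canonical-basis-expansion}.
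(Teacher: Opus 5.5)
Your proof is correct and follows essentially the paper's route: integrality and bar-invariance are immediate, and membership in $L(\infty)$ comes from $(\xi^{[l]},\xi^{[l]}) \in (l+1)+qA$ (Theorem \ref{theorem:bilinear-form-values}). The differences are that the paper just invokes Proposition 5.1.3 of Kashiwara for this step, whereas you re-derive the needed implication by the leading-coefficient sum-of-squares argument on the expansion in $\calB(\infty)$, which in addition shows that the coefficients are integers with $\sum_b c_b^2 = l+1$; for $\theta^{[l]}$ you use Lemma \ref{lemma:theta-in-canonical-basis} directly instead of the bilinear form, which is equally valid.
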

\begin{proof}
    By definition $\theta^{[l]}, \xi^{[l]} \in U_\bbZ^-(\frakg)$, $\theta^{[l]} = \overline{\theta^{[l]}}$ and $\xi^{[l]} = \overline{\xi^{[l]}}$.
    Theorem \ref{theorem:bilinear-form-values} together with Proposition 5.1.3 of \cite{Kashiwara:CrystalBases-QuantizedUniversalEnvelopingAlgebras} imply that $\theta^{[l]}, \xi^{[l]} \in L(\infty)$.
\end{proof}

\subsection{Application to tight monomial cones}
\label{section:application-tight-monomial-cones}

We show that in $A_5$, $B_3$, $C_3$, $D_4$ and $G_2$ a reduced word $\uli$ and vector $\ula$ can be chosen such that $\ula\in L_\uli$ but $f_\uli^{(\ula)}$ is not in the canonical basis.
Due to the results of Xi \cite{Xi:CanonicalBasis-B2} in $B_2$ and Marsh \cite{Marsh:TightMonomials-QuantizedEnvelopingAlgebras} in $A_4$ it is known that in these cases every monomial belonging to a tight monomial cone is in the canonical basis.
Thus, the examples we present are minimal in the sense that for each type, it is the smallest rank that such examples exist.

\begin{enumerate}
    \item Type $A_5$. For the reduced word
    \[
        \uli = (2,4,1,3,5,2,4,1,3,5,2,4,1,3,5)
    \]
    we have
    \begin{align*}
        z_1 &= (1,1,1,2,1,2,2,1,2,1,1,1,0,0,0) \text{ and } \\
        z_2 &= (0,0,1,2,1,3,3,2,4,2,3,3,1,2,1).
    \end{align*}
    Since $f_\uli^{(z_{1/2})} = \xi_{1/2}^{[1]}$ and $(\xi_{1/2}^{[1]}, \xi_{1/2}^{[1]}) \in 2 + qA$ by Theorem \ref{theorem:bilinear-form-values}, Theorem \ref{theorem:canonical-basis-criterion} implies that $f_\uli^{(z_{1/2})}$ is not in the canonical basis.
    The argument for the remaining cases is similar.
    \item Type $B_3$. For the reduced word
    \[
        \uli = (2,1,3,2,1,3,2,1,3)
    \]
    we have
    \[
        z_3 = (1,1,2,2,1,2,1,0,0) \text{ and }
        z_4 = (0,1,2,3,2,4,3,1,2).
    \]
    \item Type $C_3$. For the reduced word
    \[
        \uli = (2,1,3,2,1,3,2,1,3)
    \]
    we have
    \[
        z_5 = (1,1,1,2,1,1,1,0,0) \text{ and }
        z_6 = (0,1,1,3,2,2,3,1,1).
    \]
    \item Type $D_4$. For the reduced word
    \[
        \uli = (2,1,3,4,2,1,3,4,2,1,3,4)
    \]
    we have
    \begin{align*}
        z_7 &= (1,1,1,1,2,1,1,1,1,0,0,0) \text{ and } \\
        z_8 &= (0,1,1,1,3,2,2,2,3,1,1,1).
    \end{align*}
    \item Type $G_2$. For the reduced word
    \[
        \uli = (1,2,1,2,1,2)
    \]
    we have
    \[
        z_9 = (1,1,2,1,1,0) \text{ and }
        z_{10} = (0,1,3,2,3,1).
    \]
\end{enumerate}

\section{Quantum Frobenius morphism}
\label{section:quantum-frobenius}

We will first show that both the Frobenius morphism and its splitting are fully compatible (defined below) with the canonical basis if $\frakg$ is of type $A_1$, $A_2$, $A_3$, or $B_2$.
Then, we will present counterexamples to full compatibility in types $A_5$, $B_3$, $C_3$, $D_4$ and $G_2$.

Let $l$ be a positive integer and for $i\in I$ let $l_i$ be the smallest positive integer such that $l_i (\alpha_i, \alpha_i)/2 \in l\bbZ$.
We define a new root datum where the simple roots are $\{ \alpha_i^* := l_i\alpha_i \}$, the simple coroots are $\{ h_i^* := l_i^{-1} h_i \}$, and the weight lattice is
\[
    P^* = \{ \lambda \in P \mid \langle h_i, \lambda\rangle \in l_i\bbZ  \text{ for all } i \in I \}.
\]

\begin{remark}
    The lattice $P^*$ should not be confused with dual lattice $P^\vee$ of $P$.
\end{remark}

We write $\frakg^*$ for the associated Kac-Moody Lie algebra, and correspondingly $U_q(\frakg^*)$ for the associated quantized enveloping algebra.
By abuse of notation we denote the generators of $\frakg^*$ and $U_q(\frakg^*)$ also by $e_i$, $f_i$ ($i \in I$) and $h$ or $q^h$ ($h \in \frakh$), respectively.
Note that $\weight f_i = \alpha_i$ for $f_i \in U_q(\frakg)$, while $\weight f_i = \alpha_i^* = l_i \alpha_i$ for $f_i \in U_q(\frakg^*)$.
Furthermore, we write $q_i^* = q^{(\alpha_i^*, \alpha_i^*)/2} = q_i^{l_i^2}$, $[n]_i^* = (q_i^{*n} - q_i^{*-n})/(q_i^* - q_i^{*-1})$, etc.

Let $R$ be an integral domain.
We assume that $R$ contains a primitive $l$-th or $2l$-th root of unity if $l$ is odd, and a primitive $2l$-th root of unity if $l$ is even.
Denote this (primitive) root of unity by $\zeta$.
The ring $R$ becomes a $\bbZ[q,q^{-1}]$-algebra via $q \mapsto \zeta$.
We now have the change of rings
\[
    U_\zeta^-(\frakg) = U_\bbZ^-(\frakg) \otimes_{\bbZ[q,q^{-1}]} R
    \text{ and }
    U_\zeta^-(\frakg^*) = U_\bbZ^-(\frakg^*) \otimes_{\bbZ[q,q^{-1}]} R.
\]

Lusztig \cite{Lusztig:IntroductionToQuantumGroups} constructed two $R$-algebra homomorphisms: the \emph{Frobenius morphism}
\[
    \Fr\colon U_\zeta^-(\frakg) \to U_\zeta^-(\frakg^*),\quad \Fr(f_i^{(n)}) = f_i^{(n/l_i)},
\]
here $f_i^{(n/l_i)}$ is understood to be $0$ if $l_i$ does not divide $n$; and the \emph{Frobenius splitting}
\[
    \Fr'\colon U_\zeta^-(\frakg^*) \to U_\zeta^-(\frakg),\quad \Fr'(f_i^{(n)}) = f_i^{(l_in)}.
\]
under the following assumptions on the root datum:
\begin{enumerate}
    \item for any $i \neq j$ in $I$ such that $l_j \ge 2$, we have $l_i \ge -\langle h_i, \alpha_j\rangle + 1$;
    \item there is no sequence $i_1, \dots, i_p, i_{p+1} = i_1$ in $I$ such that $p \ge 3$ is odd and $(\alpha_{i_s}, \alpha_{i_{s+1}}) < 0$ for $s \in [1, p]$, or equivalently, the associated Dynkin diagram has no odd cycles.
\end{enumerate}

\begin{remark}
    Observe that both maps agree with the weight grading, that is, for $f_i \in U_\zeta^-(\frakg)$
    \[
        \weight f_i^{(n)} = n \alpha_i = n l_i^{-1} \alpha_i^* = \weight \Fr(f_i^{(n)}),
    \]
    and for $f_i \in U_\zeta^-(\frakg^*)$
    \[
        \weight f_i^{(n)} = n \alpha_i^* = n l_i \alpha_i = \weight \Fr'(f_i^{(n)}).
    \]
\end{remark}

It has already been noted by Lusztig \cite{Lusztig:IntroductionToQuantumGroups} that (i) is not necessary for the existence of $\Fr'$, that (ii) may replaced with the assumption that $l$ is odd, and that (ii) is always fulfilled if $\frakg$ is of finite type.
The map $\Fr$ was later also constructed by McGerty \cite{McGerty:HallAlgebras-QuantumFrobenius} using Hall algebras under the assumption that $l$ is coprime to $(\alpha_i, \alpha_i)/2$ for all $i \in I$.
In this case, he was able to prove the existence of $\Fr$ without any further restrictions on the root datum.
It is conjectured that both maps exist without any restrictions.

On the level of crystals Kashiwara \cite{Kashiwara:Similarity-CrystalBases} constructed an embedding $S_\infty\colon B(\infty) \to B(\infty)$, which has very similar properties to the Frobenius splitting.
For any $b\in B(\infty)$ and $i \in I$:
\begin{gather*}
    \label{equation:frobenius-splitting-crystal-analogue}
    S_\infty(\tf_i b) = \tf_i^{l_i} S_\infty(b),\quad S_\infty(\te_i b) = \te_i^{l_i} S_\infty(b), \\
    \epsilon_i(S_\infty(b)) = l_i \epsilon_i(b),\quad \varphi_i(S_\infty(b)) = l_i\varphi_i(b), \quad \weight(S_\infty(b)) = \weight(b).
\end{gather*}
It is thus interesting to study the compatibility of the canonical basis with the Frobenius morphism and its splitting.
This was first done by Baumann \cite{Baumann:CanonicalBasis-QuantumFrobeniusMorphism}.

Following \emph{loc.cit.}, for $b \in B(\infty)$ we say that $G(b)$ is \emph{compatible} with the Frobenius morphism if
\[
    \Fr(G(b)) =
    \begin{cases}
        G(b'), & \text{if there is } b' \in B(\infty) \text{ with } b = S_\infty(b'), \\
        0, & \text{else}.
    \end{cases}
\]
Note for the first case that $S_\infty$ is an embedding, as such there is at most one preimage and the condition makes sense.
If the above property holds for all $b \in B(\infty)$, we say that the canonical basis is \emph{fully compatible} with the Frobenius morphism.

Similarly, we say that $G(b)$ is \emph{compatible} with the Frobenius splitting if
\[
    \Fr'(G(b)) = G(S_\infty(b));
\]
and if this property holds for all $b \in B(\infty)$, we say that the canonical basis is \emph{fully compatible} with the Frobenius splitting.

\subsection{Full compatibility}
The results on full compatibility were already mentioned in \cite{Baumann:CanonicalBasis-QuantumFrobeniusMorphism} as an observation by Littelmann, however no proof was provided.

\begin{lemma}[{\cite[Lemma 35.1.5]{Lusztig:IntroductionToQuantumGroups}}]
    For $a\in \bbZ$ and $t \in \bbN$, we have the following equality in $R$:
    \[
        \qbinom{a}{t}_i^* = \qbinom{l_i a}{l_i t}_i.
    \]
\end{lemma}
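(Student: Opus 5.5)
The plan is to reduce the identity $\qbinom{a}{t}_i^* = \qbinom{l_i a}{l_i t}_i$ in $R$ to the quantum Lucas theorem, i.e.\ a factorization of Gaussian binomials at a root of unity. Write $v = q_i$, so that $q_i^* = q_i^{l_i^2} = v^{l_i^2}$. Let $\epsilon = \zeta^{(\alpha_i,\alpha_i)/2}$ be the image of $v$ in $R$. By the definition of $l_i$, the order of $\epsilon^2$ is exactly $l_i$: $\epsilon^{2l_i} = 1$, and $\epsilon^{2}$ has no smaller period. The hypotheses on $\zeta$ (primitive $l$-th or $2l$-th root of unity) are used only here, to get this exact order and to fix $\epsilon^{l_i} \in \{\pm 1\}$. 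It follows that $(q_i^*)^2 = \epsilon^{2l_i^2} = (\epsilon^{2l_i})^{l_i} = 1$, so $q_i^* = \epsilon^{l_i^2} = (\epsilon^{l_i})^{l_i} = \pm 1$. The left-hand side is therefore a quantum binomial at $\pm1$, namely $\pm$ an ordinary binomial coefficient. More precisely, it equals $(\epsilon^{l_i^2})^{t(a-t)}\binom{a}{t}$, using $\qbinom{a}{t}$ specialized at $v^* = \pm1$ (a polynomial identity in $\bbZ[v^*,v^{*-1}]$), with the usual extension to negative $a$.

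For the right-hand side, I would use the standard product formula for $\qbinom{n}{k}_v$ in $\bbZ[v,v^{-1}]$ and evaluate at $v = \epsilon$ with $\epsilon^2$ of order $l_i$. I would first prove the quantum Lucas statement for the Laurent polynomial
\[
\qbinom{l_i a + r}{l_i t + s}_v \quad (0 \le r, s < l_i),
\]
via the $q$-binomial theorem. The key identity is
\[
\prod_{j=0}^{n-1}(1+v^{2j}x) = \sum_k v^{k(n-1)}\qbinom{n}{k}_v x^k .
\]
Take $n = l_i a$ and specialize $v \mapsto \epsilon$. Group the factors into $a$ blocks of $l_i$ consecutive $j$'s. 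Each block gives
\[
\prod_{j=0}^{l_i-1}(1+\epsilon^{2j}x) = 1 - (-1)^{l_i}x^{l_i},
\]
since $\epsilon^2$ is a primitive $l_i$-th root of unity. Hence the left side becomes $(1-(-1)^{l_i}x^{l_i})^a$. Comparing coefficients of $x^{l_i t}$ yields
\[
\epsilon^{l_i t(l_i a - 1)}\qbinom{l_i a}{l_i t}_\epsilon = (-1)^{t}(-1)^{l_i t}\binom{a}{t}.
\]
This identity holds in $\bbZ[\epsilon]$, and hence in $R$ via the structure map.

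The remaining step is the sign and power bookkeeping. I would show that $\epsilon^{l_i t(l_i a-1)}(-1)^{t(l_i+1)}$ agrees with $(\epsilon^{l_i^2})^{-t(a-t)}$ after inserting the known value $\epsilon^{l_i} = \pm1$. I would do this by considering separately the cases $\epsilon^{l_i} = 1$ and $\epsilon^{l_i} = -1$, together with the parity of $l_i$. I expect this to be the main obstacle: the sign depends on whether $\zeta$ is an $l$-th or $2l$-th root and on the parity of $l$, which is exactly why the paper restricts $\zeta$ that way. Finally, negative $a$ is handled by $\qbinom{a}{t} = (-1)^t\qbinom{t-a-1}{t}$ for both $v$ and $v^*$. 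This reduces to nonnegative top entry, with $l_i(t-a-1) + (l_i - 1) = l_i t - l_i a - 1$, so one needs the Lucas formula with a nonzero remainder $r = l_i - 1$, $s = 0$. For this I would use the same generating-function argument with $n = l_i a' + r$, where the extra factor $\prod_{j<r}(1+\epsilon^{2j}x)$ contributes only degrees $< l_i$.
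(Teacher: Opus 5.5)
Your argument is correct and is essentially the standard proof of the cited Lemma 35.1.5. The paper gives no proof of its own, only the reference to Lusztig, where the identity follows from the quantum Lucas lemma 34.1.2 (which you rederive via the $q$-binomial theorem) together with evaluating the left-hand side at $q_i^*\mapsto\epsilon^{l_i^2}=\pm1$. The sign check you single out closes at once. For $a\ge0$ it reduces to $(-\epsilon^{l_i})^{l_i+1}=1$, which holds because $\epsilon^2$ having exact order $l_i$ forces $\epsilon^{l_i}=-1$ whenever $l_i$ is even, so the combination ``$l_i$ even, $\epsilon^{l_i}=1$'' in your case split never occurs and nothing depends on whether $\zeta$ is an $l$-th or a $2l$-th root.
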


\begin{proposition}
    \label{proposition:frobenius-splitting-compatibility}
    Assume that $\frakg$ is of type $A_1$, $A_2$, $A_3$, or $B_2$.
    Then, the Frobenius splitting is fully compatible with the canonical basis.
\end{proposition}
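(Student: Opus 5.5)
We prove the statement by using the explicit descriptions of the canonical basis available in these types \cite{Lusztig:CanonicalBases-QuantizedEnvelopingAlgebras,Xi:CanonicalBasis-A3,Xi:CanonicalBasis-B2}, and comparing Frobenius images with Kashiwara's embedding $S_\infty$ on the crystal side. The key observation is that in types $A_1$, $A_2$ and $B_2$, and for most elements in $A_3$, every canonical basis element is a \emph{monomial} $f_\uli^{(\ula)}$ with $\ula$ in a tight monomial cone $L_\uli$ for a suitable reduced word $\uli$. In $A_3$ a few families are not monomials; there Xi gives them as explicit $\bbZ[q,q^{-1}]$-combinations of monomials, or as ``monomial minus monomial'' expressions.

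\emph{Step 1 (monomial elements).} Let $G(b) = f_\uli^{(\ula)}$ with $\ula \in L_\uli$ and $\uli$ reduced. By definition, $\Fr'(G(b)) = f_\uli^{(l\ula)}$, where $l\ula := (l_{i_1}a_1, \dots, l_{i_r}a_r)$ and the divided powers are read in $U_\zeta^-(\frakg)$. We need to show that $l\ula$ is again in the relevant cone for $\frakg$, with respect to the same word $\uli$. Since the tight monomial cone condition (\ref{equation:lusztig-cone-condition}) for $\frakg^*$ reads
\[
a_p + a_q + \sum_{p<k<q}\langle h_i^*,\alpha_{i_k}^*\rangle a_k \le 0,
\]
and $\langle h_i^*,\alpha_j^*\rangle = l_i^{-1}l_j\langle h_i,\alpha_j\rangle$, multiplying by $l_i$ gives exactly the cone condition for $l\ula$ in $\frakg$. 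In these types Lusztig, Xi and Marsh show that tight-cone monomials lie in the canonical basis (see also Proposition \ref{proposition:family-of-monomials-in-canonical-basis}), so $f_\uli^{(l\ula)} = G(b'')$ for some $b''$. It remains to identify $b'' = S_\infty(b)$. By Proposition \ref{proposition:lusztig-cone-string-cone}, $\ula$ is the adapted string of $b$. Using $S_\infty(\tf_i b) = \tf_i^{l_i} S_\infty(b)$ together with $\epsilon_i(S_\infty b) = l_i\epsilon_i(b)$, the adapted string of $S_\infty(b)$ is $l\ula$, and Proposition \ref{proposition:base-change-canonical-basis-string-basis} then pins down $b''$. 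Note that the Frobenius splitting is only defined after specialization $q\mapsto\zeta$. Since $G(b'')$ is defined integrally and then specialized, the equality holds in $U_\zeta^-(\frakg)$.

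\emph{Step 2 (non-monomial elements in $A_3$).} For these we use Xi's explicit formula, which has the shape $G(b) = f_\uli^{(\ula)} - f_\ulj^{(\ulc)}$ or a similar combination with $q$-binomial coefficients. We apply $\Fr'$ termwise, rewrite coefficients using Lemma 35.1.5 ($\qbinom{a}{t}_i^* = \qbinom{l_ia}{l_it}_i$ in $R$), and check that the result is Xi's formula for the element with all parameters scaled by $l_i$. Because $A_3$ is simply laced, all $l_i = l$, which makes the scaling uniform. We then identify this element with $G(S_\infty(b))$, again via adapted strings as in Step 1.

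The main obstacle is Step 2, in particular the coefficient bookkeeping. I would first try to show that Xi's formulas are themselves built from $q$-binomials whose arguments are linear in the exponents, so that Lemma 35.1.5 applies directly. A secondary subtlety is surjectivity of the parametrization: we must check that the families in Xi's and Lusztig's descriptions are indexed by cones, or by unions of cones, that are closed under scaling by $(l_i)$. This lets every canonical basis element of $U^-_\zeta(\frakg^*)$ be treated uniformly.
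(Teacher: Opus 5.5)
Your reduction (pinning down $S_\infty(b)$ through adapted strings) matches the paper, as does your treatment of monomials and of $A_3$. The gap is in type $B_2$.

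\textbf{The non-monomial elements of $B_2$ are missing.} Your ``key observation'' that every canonical basis element of $B_2$ is a tight monomial is false. As a result these elements are never treated, because your Step~2 is formulated only for $A_3$. Here is an explicit example. Label so that $\alpha_1$ is short, $\langle h_1,\alpha_2\rangle=-2$.
\begin{enumerate}
\item The weight space of weight $-(3\alpha_1+\alpha_2)$ is $3$-dimensional. Its only monomials are $f_1^{(a)}f_2f_1^{(3-a)}$ for $0\le a\le 3$, subject to one Serre relation.
\item The three elements of $B(\infty)$ of this weight have $\epsilon_1=3,2,1$.
\item The one with $\epsilon_1=3$ is $f_1^{(3)}f_2$. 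Since $f_2f_1^{(3)}\notin f_1^{(2)}U_q^-(\frakg)$, it is the one with $\epsilon_1=1$.
\item The remaining element lies in $f_1^{(2)}U_q^-(\frakg)$ and must be fixed by $*$. This forces it to be $f_1^{(2)}f_2f_1-f_1^{(3)}f_2=f_1f_2f_1^{(2)}-f_2f_1^{(3)}$, which is not a monomial.
\end{enumerate}
This is the member of Xi's family (b) with $(a,b,c,d)=(2,0,1,0)$. Xi's description of $B_2$ contains several such families: single sums $\sum_k\qbinom{c-a}{k}(\cdots)$ with $c-a\le 0$, and double sums.

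\textbf{Why Step~2 does not carry over.} These elements are where the proposition needs real work. Your Step~2 recipe is to apply $\Fr'$ termwise, use Lemma 35.1.5, and land in ``the same family with parameters scaled by $l_i$''. For $l$ odd you have $l_1=l_2$, and this works as in $A_3$. For $l$ even it fails:
\begin{enumerate}
\item With $\alpha_1$ long in $\frakg$ you get $l_2=2l_1$, so $\alpha_1^*$ is short and $\alpha_2^*$ is long in $\frakg^*$. Thus $\Fr'$ maps between two copies of $B_2$ with long and short roots interchanged.
\item Consequently the image of one of Xi's families for $\frakg^*$ lies in a \emph{different} family for $\frakg$.
\item For example, with $l_1=1$, $l_2=2$, family (b), namely $\sum_k\qbinom{c-a}{k}f_1^{(a+k)}f_2^{(b+c)}f_1^{(2b+c-k)}f_2^{(d)}$ with $c\le a$, $b\ge d$, is sent to $\sum_k\qbinom{c-a}{k}f_1^{(a+k)}f_2^{(2b+2c)}f_1^{(2b+c-k)}f_2^{(2d)}$.
\item The paper recognizes this as the $*$-image of family (c) under the reparametrization $(a',b',c',d')=(2d,c,2b,a)$, and checks that the defining inequalities correspond.
\item Family (d) is handled via the $*$-image of (e). This additionally uses that $\qbinom{2b-2d-2k}{j}$ vanishes at $\zeta$ for odd $j$.
\end{enumerate}
So your proposed test, closure of each parameter cone under scaling, is the wrong one here. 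What is needed is this cross-family matching through the $*$-involution.

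\textbf{A smaller point for $A_3$.} Lemma 35.1.5 alone does not make $\Fr'(u)$ equal to Xi's formula at scaled parameters, because the scaled formula has extra terms with $t\nmid k$. You also need Lusztig's Lemma 34.1.2, which shows that $\qbinom{t(d+e-a-b)}{k}$ vanishes at $\zeta$ for those $k$.
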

\begin{proof}
    Let $b \in B(\infty)$.
    Then, we know (as a consequence of Proposition \ref{proposition:base-change-canonical-basis-string-basis}) that $G(S_\infty(b))$ occurs with coefficient $1$ in the expansion of $\Fr'(G(b))$ in the canonical basis of $U_\zeta^-(\frakg)$.
    Thus, it is sufficient to show that $\Fr'(G(b))$ is an element of the canonical basis to prove compatibility.

    Note that $A_1$ and $A_2$ are simply-laced, that is $l_i = l_j$ for any $i, j \in I$.
    Hence, it is clear from the definition of the canonical basis in $A_1$ and $A_2$ that it is compatible the Frobenius splitting.

    We prove the $A_3$-case using the explicit description of the canonical basis from \cite{Xi:CanonicalBasis-A3}.
    $A_3$ is simply-laced, thus $l_i = l_j$ for any $i, j \in I$; we write $t$ for this integer.
    It follows that the Frobenius splitting is compatible with the monomials (a) - (h).
    The computation for the elements (i) - (n) is similar; we restrict ourselves to verifying (i).
    In this case the element in the canonical basis of $U_{\zeta}^-(\frakg^*)$ is
    \[
        u = \sum_{0 \le k\le c} \qbinom{d+e-a-b}{k}^* f_2^{(b)} f_3^{(a+b+k)} f_2^{(d)} f_1^{(c+e+f)} f_2^{(c+e)} f_3^{(c-k)},
    \]
    where $f \ge d$, $e \ge b$ and $a+b \ge d+e$.
    The image under the Frobenius splitting is
    \[
        \Fr'(u) = \sum_{0 \le k\le c} \qbinom{td+te-ta-tb}{tk} f_2^{(tb)} f_3^{(ta+tb+tk)} f_2^{(td)} f_1^{(tc+te+tf)} f_2^{(tc+te)} f_3^{(tc-tk)}.
    \]
    Using Lemma 34.1.2 of \cite{Lusztig:IntroductionToQuantumGroups}, we see that
    \[
        \Fr'(u) = \sum_{0 \le k\le tc} \qbinom{td+te-ta-tb}{k} f_2^{(tb)} f_3^{(ta+tb+k)} f_2^{(td)} f_1^{(tc+te+tf)} f_2^{(tc+te)} f_3^{(tc-k)},
    \]
    since the $q$-binomial coefficient vanishes unless $k$ is a multiple of $t$.
    Therefore, $\Fr'(u)$ is an element of the canonical basis of $U_\zeta^-(\frakg^*)$.

    We prove the $B_2$-case using the explicit description of the canonical basis from \cite{Xi:CanonicalBasis-B2}.
    If $l$ is odd, then $l_1 = l_2$ and the same arguments as for $A_3$ may be used.
    Now, assume that $l$ is even; so in $\frakg$ the root $\alpha_1$ is long, while in $\frakg^*$ the root $\alpha_1^*$ is short.
    We only consider the cases (a), (b) and (d); the other cases are similar.
    Note that $2l_1 = l_2$ and since the in equalities in (a) - (f) define a cone we may restrict ourselves to $l_1 = 1$ and $l_2 = 2$ without loss of generality.

    For (a) the element in the canonical basis of $U_\zeta^-(\frakg^*)$ is
    \[
        u = f_1^{(a)} f_2^{(b+c)} f_1^{(2b+c)} f_2^{(d)},
    \]
    where $c \ge a$ and $b \ge d$.
    The image under the Frobenius splitting is
    \[
        \Fr'(u) = f_1^{(a)} f_2^{(2b+2c)} f_1^{(2b+c)} f_2^{(2d)}.
    \]
    We need that $2b + 2c \ge a + 2b + c$ and $4b + 2c \ge 2b + 2c + 2d$ for $f_1^{(a)} f_2^{(2b+2c)} f_1^{(2b+c)} f_2^{(2d)}$ to be in the canonical basis of $U_\zeta^-(\frakg)$.
    This is case, since the inequalities are equivalent to $c \ge a$ and $b \ge d$, respectively.

    For (b) the element in the canonical basis of $U_\zeta^-(\frakg)$ is
    \[
        u = \sum_{0\le k \le 2b+c} \qbinom{c-a}{k}_1^* f_1^{(a+k)} f_2^{(b+c)} f_1^{(2b+c-k)} f_2^{(d)},
    \]
    where $c \le a$ and $b \ge d$.
    The image under the Frobenius splitting is
    \[
        \Fr'(u) = \sum_{0\le k \le 2b+c} \qbinom{c-a}{k}_1 f_1^{(a+k)} f_2^{(2b+2c)} f_1^{(2b+c-k)} f_2^{(2d)}.
    \]
    Applying the $*$-invariance of the canonical basis (Corollary \ref{corollary:canonical-basis-star-invariance}) to (c), we see that
    \[
        \sum_{0\le k \le b'+c'} \qbinom{b'-d'}{k}_1 f_1^{(d'+k)} f_2^{(2b'+c')} f_1^{(b'+c'-k)} f_2^{(a')}
    \]
    is an element of the canonical basis of $U_\zeta^-(\frakg)$ for $c' \ge a'$ and $b' \le d'$.
    Choosing $a' = 2d$, $b' = c$, $c' = 2b$, and $d' = a$, we see that $\Fr'(u)$ is an element of the canonical basis of $U_\zeta^-(\frakg)$.

    For (d) the element in the canonical basis of $U_\zeta^-(\frakg^*)$ is
    \[
        u = \sum_{\substack{0 \le j \le 2b+c \\ 0 \le k \le b +c}} \qbinom{c-a}{j}_1^* \qbinom{b-d-j}{k}_2^* f_1^{(a+j)} f_2^{(b+c-k)} f_1^{(2b+c-j)} f_2^{(d+k)}.
    \]
    where $c \le a$, $b \le d$ and $c + d \ge a + b$.
    The image under the Frobenius splitting is
    \[
        \Fr'(u) = \sum_{\substack{0 \le j \le 2b+c \\ 0 \le k \le b +c}} \qbinom{c-a}{j}_1 \qbinom{2b-2d-2j}{2k}_2 f_1^{(a+j)} f_2^{(2b+2c-2k)} f_1^{(2b+c-j)} f_2^{(2d+2k)}.
    \]
    Using the $*$-invariance on (e), we see that
    \[
        \sum_{\substack{0 \le j \le 2b'+c' \\ 0 \le k \le b'+c'}} \qbinom{c'-a'-2k}{j}_2 \qbinom{b'-d'}{k}_1 f_1^{(d'+k)} f_2^{(2b'+c'-j)} f_1^{(b'+c'-k)} f_2^{(a'+j)}
    \]
    is an element of the canonical basis of $U_\zeta^-(\frakg)$ for $c' \le a'$, $b' \le d'$ and $c' + 2d' \le a' + 2b'$.
    Choose $a' = 2d$, $b' = c$, $c' = 2b$, and $d' = a$.
    Observing that the first binomial coefficient vanishes unless $j$ is even, we see that $\Fr'(u)$ is an element of the canonical basis of $U_\zeta^-(\frakg)$.
\end{proof}

\begin{proposition}
    \label{proposition:frobenius-morphism-compatibility}
    Assume that $\frakg$ is of type $A_1$, $A_2$, $A_3$, or $B_2$.
    Then, the Frobenius morphism is fully compatible with the canonical basis.
\end{proposition}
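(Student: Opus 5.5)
The plan mirrors the proof of Proposition \ref{proposition:frobenius-splitting-compatibility}: we go through the explicit lists of canonical basis elements, namely the obvious ones in $A_1$ and $A_2$, Xi's lists (a)--(n) for $A_3$ and (a)--(f) for $B_2$. For each element $G(b)$, written in $U_\zeta^-(\frakg)$ as a sum of monomials weighted by $q$-binomial coefficients, we compute $\Fr(G(b))$ directly. Two facts about $\Fr$ drive the computation. First, $\Fr(f_i^{(n)}) = 0$ unless $l_i \mid n$. Second, by Lemma 35.1.5 of \cite{Lusztig:IntroductionToQuantumGroups}, specialized at $\zeta$, a $q$-binomial $\qbinom{l_i a}{l_i t}_i$ becomes $\qbinom{a}{t}_i^*$. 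Lusztig's Lemma 34.1.2 complements this: $\qbinom{n}{k}_i$ vanishes at $\zeta$ when $l_i \nmid k$ but $l_i \mid n$.

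The case analysis goes as follows.

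In type $A_1$, and for monomial elements in general, $\Fr(f_\uli^{(\ula)})$ is $0$ unless every exponent $a_k$ is divisible by $l_{i_k}$. In that case it equals $f_\uli^{(\ula/l)}$. Since the defining inequalities of each of Xi's families form a cone, the divided exponents still satisfy them, so the image is again a canonical basis element of $U_\zeta^-(\frakg^*)$.

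For the polynomial elements, such as (i)--(n) in $A_3$ and (b)--(f) in $B_2$, we apply $\Fr$ termwise. A summand survives only if all exponents are divisible by the relevant $l_i$. Among the surviving terms, the $q$-binomial coefficient either vanishes at $\zeta$ (by Lemma 34.1.2) or reduces to the starred binomial. Here we shall need that the parameters $a,b,\dots$ themselves are multiples of $l$; if not, we must show the whole image is $0$. The reindexed sum is then precisely the corresponding element of the same family for $\frakg^*$. In $B_2$ with $l$ even we have $l_1 \neq l_2$, and long and short roots are interchanged in $\frakg^*$, so the target family is a different one. Exactly as in the splitting proof, we identify it by applying the $*$-invariance of Corollary \ref{corollary:canonical-basis-star-invariance} and then making a parameter substitution. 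By the cone argument we may again normalize to $l_1 = 1$, $l_2 = 2$.

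Next we match the answer with $S_\infty$. If $\Fr(G(b)) = G(b')$, we still need $b = S_\infty(b')$. For this we use the proof of Proposition \ref{proposition:frobenius-splitting-compatibility}, which gives $\Fr'(G(b')) = G(S_\infty(b'))$, together with the fact that $\Fr \circ \Fr' = \id$ on $U_\zeta^-(\frakg^*)$ (checked on generators). Since $\Fr'$ maps canonical basis elements bijectively onto the set $\{G(S_\infty(b'))\}$, and $\Fr$ inverts it there, we get $\Fr(G(S_\infty(b'))) = G(b')$. This settles every $b$ in the image of $S_\infty$. It then remains to show that $\Fr(G(b)) = 0$ for $b \notin \operatorname{im} S_\infty$. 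The explicit computation shows that $\Fr(G(b))$ is either $0$ or a canonical basis element. In the latter case, that element has the form $G(b')$, and hence equals $\Fr(G(S_\infty(b')))$. Comparing leading terms, in the sense of Proposition \ref{proposition:base-change-canonical-basis-string-basis} (adapted strings scale by $l_i$ under $S_\infty$), forces $b = S_\infty(b')$.

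The main obstacle is the vanishing step: showing that when $b \notin \operatorname{im} S_\infty$ every surviving summand cancels. This is clearest for the $B_2$ families with double $q$-binomial sums, such as (d) and (e). There I would first try the parity/divisibility vanishing of $\qbinom{n}{k}$ at $\zeta$, to kill all terms whose exponents are not divisible by the corresponding $l_i$. If that alone does not suffice, I would fall back on the $q$-Lucas factorization $\qbinom{n}{k} = \qbinom{n_0}{k_0}\binom{n_1}{k_1}$ at $\zeta$ and check the cases by hand.
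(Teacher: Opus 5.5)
Your reduction matches the paper's. Since $\Fr\circ\Fr'=\id$ and $\Fr'(G(b'))=G(S_\infty(b'))$ by Proposition~\ref{proposition:frobenius-splitting-compatibility}, everything comes down to proving $\Fr(G(b))=0$ for $b\notin\operatorname{im}S_\infty$. So identifying $\Fr(G(b))$ directly via the cone argument and $*$-invariance is unnecessary.

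The argument you offer for that remaining case does not work, and the step you postpone as the ``main obstacle'' is the actual content of the proof. Comparing leading terms only identifies the adapted string of $S_\infty(b')$ with the smallest string whose monomial survives $\Fr$ in the expansion of $G(b)$. Every $b$ whose adapted string $\ula$ is not divisible by the $l_{i_k}$ lies outside $\operatorname{im}S_\infty$. For such $b$ the leading monomial $f_\uli^{(\ula)}$ is killed by $\Fr$, the smallest surviving string is strictly larger than $\ula$, and nothing forces $b=S_\infty(b')$.

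Nor is nonvanishing off the image excluded by any formal property. In the paper's counterexample for $A_5$, $D_4$, $G_2$ with $l=2$, one has $\Fr(\xi^{[1]}-\theta^{[2]})=-\theta^{[1]}\neq0$. Yet $\xi^{[1]}-\theta^{[2]}=G(b)$ with $b\notin\operatorname{im}S_\infty$, since its adapted string is the exponent vector of $\xi^{[1]}$, which has odd entries. So the vanishing has to be read off from Xi's formulas.

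The missing observation is that no cancellation between summands is needed: each summand vanishes individually. The reason is that the upper entry of every $q$-binomial is an integer combination of the exponents of the monomial it multiplies.
\begin{itemize}
\item In $A_3$, case (i), $d+e-a-b=d+(c+e)-(a+b+k)-(c-k)$. So if the monomial survives $\Fr$, then $l\mid d+e-a-b$, and Lusztig's Lemma 34.1.2 kills the summand unless $l\mid k$. Divisibility of $b$, $a+b+k$, $d$, $c+e+f$, $c+e$, $c-k$ then forces all of $a,\dots,f$ to be multiples of $l$, i.e.\ $G(b)$ is in the image of $\Fr'$.
\item In $B_2$, case (d), the summands are $\qbinom{c-a}{j}_2\qbinom{b-d-j}{k}_1 f_2^{(a+j)}f_1^{(b+c-k)}f_2^{(2b+c-j)}f_1^{(d+k)}$, and one uses $l_1\mid l_2\mid 2l_1$. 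First, $l_1\mid(2b+c-j)-(b+c-k)-(d+k)=b-d-j$ forces $l_1\mid k$. Then $l_2\mid 2(b+c-k)-(2b+c-j)-(a+j)=c-a-2k$ together with $l_2\mid 2k$ gives $l_2\mid c-a$. This forces $l_2\mid j$, hence $l_2\mid a,c$ and $l_1\mid b,d$.
\end{itemize}

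Your plan to use the binomial vanishing ``to kill all terms whose exponents are not divisible'' points the wrong way. Those terms are already killed by $\Fr$ itself. The binomial vanishing is what disposes of the terms whose exponents are all divisible but whose summation index is not. With this observation, no $q$-Lucas case analysis is needed.
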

\begin{proof}
    For an element $u$ of the canonical basis of $U_\zeta^-(\frakg)$, it is sufficient to show that $\Fr(u) = 0$ if $u$ is not in the image of the Frobenius splitting, since $\Fr \circ \Fr' = \id$.

    The statement for $A_1$ and $A_2$ is clear from the definition of the Frobenius morphism; for $A_3$ we will again consider only the case (i) from \cite{Xi:CanonicalBasis-A3}:
    \[
        u = \sum_{0 \le k\le c} \qbinom{d+e-a-b}{k} f_2^{(b)} f_3^{(a+b+k)} f_2^{(d)} f_1^{(c+e+f)} f_2^{(c+e)} f_3^{(c-k)},
    \]
    where $f \ge d$, $e \ge b$ and $a+b \ge d+e$.
    From the definition of the Frobenius morphism we see that $\Fr(u) = 0$, unless $l$ divides all exponents.
    In particular, $l$ must divide $d + (c+e) - (a+b+k) - (c-k) = d+e-a-b$.
    Hence, for a summand not to vanish, $k$ must be a multiple of $l$, as otherwise the $q$-binomial would be zero in $R$ by Lemma 34.1.2 of \cite{Lusztig:IntroductionToQuantumGroups}.
    It follows that every summand in the definition of $u$ vanishes under $\Fr$ unless $a,b,c,d,e$ and $f$ are multiples of $l$.
    In this case, the arguments in the proof for the Frobenius splitting imply that $u$ is indeed in the image of the Frobenius splitting.

    For $B_2$, we only consider the case (d) from \cite{Xi:CanonicalBasis-B2}:
    \[
        u = \sum_{\substack{0 \le j \le 2b+c \\ 0 \le k \le b +c}} \qbinom{c-a}{j}_2 \qbinom{b-d-j}{k}_1 f_2^{(a+j)} f_1^{(b+c-k)} f_2^{(2b+c-j)} f_1^{(d+k)},
    \]
    where $c \le a$, $b \le d$ and $c + d \ge a + b$.
    Any summand vanishes under the Frobenius morphism unless $l_1$ divides $b+c-k$ and $d+k$, and $l_2$ divides $a+j$ and $2b+c-j$.
    Since $l_1 \in \{ l_2/2, l_2 \}$, we must have that $l_1$ divides
    \[
        (2b+c-j) - (b+c-k) - (d+k) = b-d-j.
    \]
    Thus, any summand vanishes unless $k$ is divisible by $l_1$.
    Furthermore, we must have that $l_2 \in \{ l_1, 2l_1 \}$ divides
    \[
        2(b+c-k) - (2b+c-j) - (a+j) = c-a-2k.
    \]
    Since, $2k$ is divisble by $l_2$, in particular, $c-a$ must be divisible by $l_2$.
    Hence, any summand vanishes unless $j$ is divisble by $l_2$.
    We conclude that every summand in the definition of $u$ vanishes under $\Fr$ unless $a$ and $c$ are multiples of $l_2$, and $b$ and $d$ are multiples of $l_1$.
    Now, the arguments in the proof for the Frobenius splitting imply that $u$ is indeed in the image of the Frobenius splitting.
\end{proof}

\subsection{Counterexamples to full compatibility}

The first counterexamples to full compatibility of the Frobenius morphism and splitting with the canonical basis were provided by Baumann \cite{Baumann:CanonicalBasis-QuantumFrobeniusMorphism} for the types $A_5$ and $D_4$.
The monomials he used in the counterexamples are our monomials $\xi_1^{[l]}$, $\xi_2^{[l]}$ for $A_5$ and $\xi_7^{[l]}$, $\xi_8^{[l]}$ for $D_4$ (see Theorem \ref{theorem:classification}).
We present a different argument for $A_5$ and $D_4$; the results for $B_3$, $C_3$ and $G_2$ are new.

First, we present the counterexamples for the Frobenius splitting.
For simplicity assume that $l$ is coprime to $(\alpha_i, \alpha_i)/2$ for all $i\in I$.
Let $\frakg$ be of type $A_5$, $B_3$, $C_3$, $D_4$ or $G_2$ and let $\theta$ and $\xi$ be chosen from the list in Theorem \ref{theorem:classification} correspondingly.
Then, $\xi^{[1]} - \theta^{[2]} \in \calB(\infty)$ by Corollary \ref{corollary:l1-in-canonical-basis}.
We have
\[
    \Fr'(\xi^{[1]} - \theta^{[2]}) = \xi^{[l]} - \theta^{[2l]}
\]
and computing the bilinear form gives
\[
    (\xi^{[l]} - \theta^{[2l]}, \xi^{[l]} - \theta^{[2l]}) = (\xi^{[l]}, \xi^{[l]}) - 2(\xi^{[l]}, \theta^{[2l]}) + (\theta^{[2l]}, \theta^{[2l]}) = l.
\]
Hence, $\xi^{[l]} - \theta^{[2l]} \notin \calB(\infty)$, by Theorem \ref{theorem:canonical-basis-criterion}.

For the counterexamples for the Frobenius morphism, we need to distinguish between types.

If $\frakg$ is of type $A_5$, $D_4$ or $G_2$, we consider $l = 2$ (assuming that $l$ is coprime to $(\alpha_i, \alpha_i)/2$ for all $i\in I$).
Then,
\[
    \Fr(\xi^{[1]} - \theta^{[2]}) = -\theta^{[1]} \notin \calB(\infty).
\]
This follows directly from the fact that $\theta^{[1]} \in \calB(\infty)$ by Lemma \ref{lemma:theta-in-canonical-basis}.

If $\frakg$ is of type $C_3$, we consider $l = 2$ and the bilinear form determined by $(\alpha_1, \alpha_1) = (\alpha_2, \alpha_2) = 2$ and $(\alpha_3, \alpha_3) = 4$.
Thus, $l_1 = l_2 = 2$ and $l_3 = 1$.
We have
\[
    f_2 (f_1 f_3) f_2^{(2)} (f_3 f_1) f_2 - f_2^{(2)} (f_1^{(2)} f_3^{(2)}) f_2^{(2)} \in \calB(\infty),
\]
by Corollary \ref{corollary:l1-in-canonical-basis}, and
\[
    \Fr(f_2 (f_1 f_3) f_2^{(2)} (f_3 f_1) f_2 - f_2^{(2)} (f_1^{(2)} f_3^{(2)}) f_2^{(2)}) = -f_2 (f_1 f_3^{(2)}) f_2.
\]
Since $f_2 (f_1 f_3^{(2)}) f_2$ is an element of the canonical basis of $U_\zeta^-(\frakg^*)$ (type $B_3$) by Lemma \ref{lemma:theta-in-canonical-basis}, $-f_2 (f_1 f_3^{(2)}) f_2$ is not.

If $\frakg$ is of type $B_3$, we consider $l = 4$ and the bilinear form determined by $(\alpha_1, \alpha_1) = (\alpha_2, \alpha_2) = 4$ and $(\alpha_3, \alpha_3) = 2$.
Thus, $l_1 = l_2 = 2$ and $l_3 = 4$.
We have
\[
    f_2 (f_1 f_3^{(2)}) f_2^{(2)} (f_3^{(2)} f_1) f_2 - f_2^{(2)} (f_1^{(2)} f_3^{(4)}) f_2^{(2)} \in \calB(\infty),
\]
by Corollary \ref{corollary:l1-in-canonical-basis}, and
\[
    \Fr(f_2 (f_1 f_3^{(2)}) f_2^{(2)} (f_3^{(2)} f_1) f_2 - f_2^{(2)} (f_1^{(2)} f_3^{(4)}) f_2^{(2)}) = -f_2 (f_1 f_3) f_2.
\]
Since $f_2 (f_1 f_3) f_2$ is an element of the canonical basis of $U_\zeta^-(\frakg^*)$ (type $C_3$) by Lemma \ref{lemma:theta-in-canonical-basis}, $- f_2 (f_1 f_3) f_2$ is not.

\begin{remark}
    In view of Remark \ref{remark:xi-canonical-basis-expansion}, we expect that the counterexamples for the Frobenius morphism can be generalized to all $l \ge 2$ that are coprime to $(\alpha_i, \alpha_i)/2$ for all $i\in I$.
\end{remark}

\printbibliography

\end{document}